\documentclass{amsart}
\usepackage{amssymb,amsmath,amsthm,epsf,epsfig,dsfont,bbm}
\usepackage[margin=90pt]{geometry}
\usepackage{verbatim}
\usepackage{latexsym}
\usepackage[utf8]{inputenc}
\usepackage[backref=page]{hyperref}
\hypersetup{
	colorlinks   = true,
	citecolor    = blue,
	linkcolor    = red 
}
\usepackage{upref, eucal}
\usepackage[all]{xy}

\usepackage{autonum}

\newcommand {\nc} {\newcommand}
\newcommand {\enm} {\ensuremath}

\def \d{\delta}

\nc {\bdm} {\begin{displaymath}}
\nc {\edm} {\end{displaymath}}

\newtheorem {theorem} {\bf{Theorem}}[section]
\newtheorem {lemma}[theorem] {\bf Lemma}
\newtheorem {proposition}[theorem] {\bf Proposition}

\newtheorem {corollary}[theorem] {\bf Corollary}
\numberwithin {equation}{section}

\newcommand\BB{\mathbb{B}}\newcommand\FF{\mathbb{F}}\newcommand\QQ{\mathbb{Q}}\newcommand\WW{\mathbb{W}}
\newcommand\ZZ{\mathbb{Z}}

\newcommand{\Ou}{\enm{\mathcal{O}}}

\nc{\J}{\enm{\mathcal{J} }}
\nc {\Z} {\enm{\mathbb{Z}}}
\nc {\form}[1] {\enm{\mbox{\underline{for}}}_{#1}}
\nc {\prol}[1] {\enm{\mbox{\underline{prol}}_{{#1}^*}}}

\nc {\stk} {\stackrel}

\newcommand{\map}{\rightarrow}

\newcommand{\inj}{\hookrightarrow}

\newcommand{\Pn}[2] {\ensuremath{ {\mathbb{P}}^{#1}_{#2}}}
\nc{\Quot}[3]{\enm{ {\mathfrak{Quot}_{ {#1}/{#2}/{#3}}}}}
\nc{\Hilb}[2]{\enm{ {\mathfrak{Hilb}_{ {#1}/{#2}}}}}
\newcommand{\mfrak}[1]{\mathfrak{#1}}

\newcommand{\bb}[1]{\mathbb{#1}}
\newcommand{\mcal}[1]{\mathcal{#1}}

\nc {\Coh}[4] {\ensuremath{H^{#1}(\Pn{#2}{},{#3}({#4}))}}
\nc {\Ch}[3] {\enm{H^{#1}(X_t,{#2}_t({#3}))}}
\nc {\Qphi}[4]{\enm{ {\mathfrak{Quot}^{~#4}_{ {#1}/{#2}/{#3}}}}}
\nc {\Gra}[4]{\enm{ {\mathfrak{Grass}_{#2}({#3},{#4})}}}
\nc {\HomA}[2]{\enm{\mathrm{Hom}_A{#1}{#2}}}
\nc {\tr}{\mathrm{tr}}

\newcommand{\ov}[1]{\overline{#1}}

\nc {\C}[2]{\enm{\left(\begin{array}{l} {#1} \\ {#2} \end{array} \right)}}
\nc {\mat}[4]{\enm{\left(\begin{array}{ll}{#1} & {#2} \\ {#3} & {#4}
\end{array}\right)}}

\def \vp{\varphi}
\def \mb{\mbox}

 \def \Z{{\mathbb Z}}

   \def \h{\hat{\ }}

\def \d{\delta} \def \bZ{{\mathbb Z}}  \def \bF{{\bf F}}

  \def \bX{{\bf X}} \def \bH{{\bf H}}
   \def \bF{{\mathbb F}}

\def \hG{\hat{\mathbb{G}}_{\mathrm{a}}}
\def \hGm{\hat{\mathbb{G}}_{\mathrm{m}}}

\def \hA{\hat{A}}

\def \R1{R((q))[q']\h}

\usepackage{xcolor}

\newcommand{\lam}{\lambda}
\DeclareMathOperator{\Spec}{\mathrm{Spec}}
\DeclareMathOperator{\Spf}{\mathrm{Spf}}
\DeclareMathOperator{\Lie}{\mathrm{Lie}}
\DeclareMathOperator{\rk}{\mathrm{rk}}
\newcommand{\Hom}{\mathrm{Hom}}
\newcommand{\End}{\mathrm{End}}
\newcommand{\Ext}{\mathrm{Ext}}

\newcommand{\bI}{{\bf I}}
\newcommand{\switt}{s_{\mathrm{Witt}}}

\newcommand{\longlabelmap}[1]{{\,\buildrel #1\over\longrightarrow\,}}
\newcommand{\longmap}{{\,\longrightarrow\,}}
\newcommand{\beqar}{\begin{eqnarray*}}
\newcommand{\eeqar}{\end{eqnarray*}}

\nc{\bx}{\mathbf{x}}
\nc{\by}{\mathbf{y}}
\nc{\bz}{\mathbf{z}}
\nc{\ba}{\mathbf{a}}
\nc{\Fp}{\tilde{F}}
\nc{\Rp}{\tilde{R}}
\nc{\mlow}{m_{\mathrm{l}}}
\nc{\mup}{m_{\mathrm{u}}}
\nc{\ord}{\mb{ord }}
\nc{\bXp}{\bX_{\mathrm{prim}}}
\nc{\bPsi}{\mathbf{\Psi}}
\nc{\mult}{\mathrm{mult}}
\nc{\mbB}{\mathbbm{B}}
\nc{\mfor}[1]{{#1}^{\mathrm{for}}}
\nc{\Hdr}{\bH^1_{\mathrm{dR}}}
\nc{\Hcr}{\bH^1_{\mathrm{cris}}}
\nc{\Fc}{F_{\mathrm{cris}}}
\nc {\Hd}{\bH_{\d}}
\nc{\mn}{[m]n}

\nc{\tF}{\tilde{F}}
\nc{\fra}{\mfrak{f}}
\nc{\bt}{{\bf t}}
\nc{\Nn}{N^{\mn}}
\nc{\del}{\Delta}
\nc{\tilW}{\tilde{W}}
\nc{\bo}{{\bf b}}

\nc{\Di}[2]{\Delta^{#1}i^*\d^{#2}}
\nc{\di}[1]{i^*\d^{#1}}
\newcommand{\oG}{\overline{G}}
\nc{\pr}{\mathrm{pr}}
\nc{\Mat}{\mathrm{Mat}}

\nc{\tTheta}{\tilde{\Theta}}
\newcommand{\mfg}{\mathfrak{g}}
\nc{\clift}{\beta}
\nc{\Gfor}{G^{\mathrm{for}}}
\nc{\cblue}{\color{black}}
\nc{\cred}{\color{purple}}
\nc{\Fcris}{F_{\mathrm{cris}}}

\newcommand{\cb}[1]{\color{black} {#1} \color{black}}

\newcommand{\mr}[1]{\mathrm{#1}}
\newcommand{\Iso}{FIso \hspace{.05cm}}

\nc{\hGa}{\bb{G}_a^{\mathrm{for}}}
\newcommand{\ainj}[1]{\ar@{^{(}->}[{#1}]}

\DeclareRobustCommand{\loongleftarrow}{%
\leftarrow\joinrel\relbar\joinrel\relbar\DOTSB
}

\title
[Delta Characters and Filtered Isocrystals]
{Delta Characters and Filtered Isocrystals}

\author{Lance Gurney}
\address{Department of Mathematics and Statistics,
University of Melbourne, Victoria 3010 Australia}
\email{lance.gurney@unimelb.edu.au}

\author{Sudip Pandit}
\address{Department of Mathematics, 
King's College London, Strand, London WC2R 2LS, UK}
\email{sudip.pandit@kcl.ac.uk}

\author{Arnab Saha}
\address{Department of Mathematics, 
Indian Institute of Technology Gandhinagar, Gujarat 382355, India}
\email{arnab.saha@iitgn.ac.in}

\date{}

\subjclass[2010]{ Primary 11G07, 14F30, 14L05, 14L15; secondary 14G20, 14K15, 14B20}

\keywords{Witt vectors, delta geometry, abelian schemes, de Rham cohomology, crystalline cohomology, filtered isocrystals}

\begin{document}
\maketitle

\begin{abstract}
\cblue

Given an abelian scheme $A$ over a $p$-adic ring $R$, Borger and Saha constructed a filtered module $\{\mathbf{H}_\delta(A) \supset \mathbf{X}_{\mathrm{prim}}(A)\supset \{0\}\}$ with a semilinear operator $\mathfrak{f}^*$ on $\mathbf{H}_\delta(A)$ using the theory of arithmetic jet spaces.
The above object admits a canonical map $\Phi$ to the 
Hodge sequence $\{\mathbf{H}^1_{\mathrm{dR}}(A) \supset H^0(A,\Omega_A)\supset \{0\}\}$ of $A$
in the category of filtered modules. As a result, by restricting $\Phi$, we
obtain a natural $R$-linear map $\Upsilon: \mathbf{X}_{\mathrm{prim}}(A) \rightarrow 
H^0(A,\Omega_A)$.

In this paper, we show that the map $\Upsilon$ is an isomorphism of vector
spaces over $K$, the field of fractions of $R$. As a consequence, we will show that for all abelian schemes $A$, the operator $\mathfrak{f}^*$ on $\mathbf{H}_\delta(A)_K$ 
is a bijection, and
our object $\{\mathbf{H}_\delta(A)_K \supset
\mathbf{X}_{\mathrm{prim}}(A)_K\supset \{0\}\}$ becomes
 a filtered isocrystal. 
In fact, the above results admit a generalization to the setting of semi-abelian schemes.
The elements of $\mathbf{X}_{\mathrm{prim}}(A)$ are represented by 
primitive additive 
characters of the arithmetic jet spaces attached to $A$. 
Hence, our isomorphism given by $\Upsilon$ provides an interesting 
character-theoretic interpretation of $H^0(A,\Omega_A)$  
in terms of primitive delta characters. As a result, to any $1$-form $\omega$, the above isomorphism associates a canonical 
numerical invariant that depends on deformation theoretic data of $A$.

Furthermore, we also extend a
comparison theorem between $\mathbf{H}_\delta(A)_K$ 
and the first crystalline cohomology 
$\mathbf{H}_{\mathrm{cris}}^1(A)_K$ in the general case when the elliptic curve $A$ is defined over 
the ring of integers of a $p$-adic field $K$ that is a finite extension of
$\mathbb{Q}_p$.

%Using our results, the second author showed that the original map $\Phi$ is in fact a map of $F$-isocrystals which would imply that 
%$\mathbf{H}_\delta(A)_K$ is canonically isomorphic to the fundamental smallest 
%subisocrystal of $\mathbf{H}^1_{\mathrm{cris}}(A_0)_K$ containing $H^0(A,\Omega_A)$.

\end{abstract}

\color{black}

\section{Introduction}
Drawing inspiration from differential algebraic geometry, Buium initiated the theory of arithmetic jet spaces in the category of $p$-adic formal schemes \cite{bui95} and studied the additive characters of the arithmetic jet spaces of abelian schemes over a $p$-adic field, which we call delta characters of the abelian scheme. The arithmetic jets and delta characters encode subtle arithmetic information about abelian varieties. For example, the torsion points of an abelian variety have been studied using delta characters and their kernels, which remarkably led to Buium's proof of the effective Manin-Mumford conjecture for curves \cite{bui96},  and very recently, explicit bounds on the Mordell-Lang  and Zilber--Pink locus for curves by Dogra and Pandit \cite{DP25, DP26} and a higher-dimensional unramified Manin-Mumford bound by Miller-Morrow using $p$-adic methods in \cite{MM25}. The theory of arithmetic jet spaces has been further developed in many articles, such as \cite{bui00, barc, busa1, busa2, hurl}, from the perspective of modular forms, and has been applied to prove finiteness results in modular--elliptic correspondences by Buium and Poonen in \cite{BP09}. A detailed exposition of the subject can be found in \cite{bui-book}, and more recent developments are covered in \cite{BL, buium-miller-0, buium-miller}.

In \cite{BS_a}, associated to an abelian scheme $A$,
Borger and Saha constructed a canonical filtered vector space 
$\{\Hd(A) \supset \bXp(A)\}$ with a semilinear operator $\fra^*$ on it. 
The filtered vector space admits a natural map $\Phi :\bH_\delta(A) \map 
\Hcr(A)$ respecting the Hodge filtration. As a result, the above map $\Phi$ by restriction induces
a canonical map $\Upsilon : \bXp(A) \map H^0(A,\Omega_A)$. In 
\cite{PS-2}, Pandit and Saha showed that in the case when $A$ is an elliptic 
curve over $\ZZ_p$, $\Hd(A)$ is indeed a filtered isocrystal which admitted an 
appropriate
 comparison isomorphism with $\Hcr(A)$. The analogous theory over function
fields was developed in \cite{BS_a} and \cite{PS-1}.

One of the main results
of this paper is to show that the above map $\Upsilon$ is an isomorphism. 
As a consequence, we show that for all abelian schemes $A$, our object
$\{\bH_\delta(A) \supset \bXp(A)\}$ is a filtered isocrystal.
The elements of $\bXp(A)$ are represented by primitive delta 
characters of the arithmetic jet spaces attached to $A$. 
Hence our isomorphism given by $\Upsilon$ gives a very interesting 
character-theoretic
interpretation of $H^0(A,\Omega_A)$ in terms of delta characters.

Using our above results, the second author will show in a 
subsequent paper \cite{Pandit26} that the original map $\Phi:\bH_\delta(A) \map \Hcr(A)$ is 
indeed a morphism of filtered isocrystals-- in other words, $\Phi$ is compatible
with the operator $\fra^*$ on $\bH_\delta(A)$ and the crystalline operator 
$\Fcris$ on
$\Hcr(A)$. This will imply that $\Hd(A)$ is canonically isomorphic to the smallest
subisocrystal of $\Hcr(A)$ containing $H^0(A,\Omega_A)$. Note that the $R$-submodule
$H^0(A,\Omega_A)$ is not always a subisocrystal of $\Hcr(A)$, that is the 
crystalline operator $\Fcris$ does not necessarily preserve $H^0(A,\Omega_A)$.
In fact by Berthelot (Theorem $3.17$ in \cite{BO}), $H^0(A,\Omega_A)$ is a subisocrystal of
$\Hcr(A)$ if and only if there exists an endomorphism of $A$ which lifts the
absolute Frobenius on the special fiber.

Some of the above mentioned results in this paper hold in higher generality of 
semi-abelian schemes which we will now explain in greater detail.

%\color{black}
\subsection{Notations and geometric setup.}
Let $\Ou$ be a Dedekind domain of characteristic $0$ with finite residue
fields and fix a non-zero prime ideal $\mfrak{p}$ in it. Let $k$ be the 
residue field at $\mfrak{p}$ with cardinality $q$ which is a power of some 
prime $p$ and $\pi$ be a uniformizer of $\mfrak{p}\Ou_p$, that is, the image of
$\pi$ in $\Ou_{\mfrak{p}}$ generates its maximal ideal. 
Let $R = W(k)$ be the infinite length $\pi$-typical Witt vectors over $k$. Then the
identity map $\phi= \mathbbm{1}$ is a lift of the $q$-power Frobenius on $k$ 
(which is also the identity on $k$).
Let $K$ be the fraction field of $R$ and 
 $S = \Spf R$ be the $\pi$-formal scheme associated to $R$.

%Let $R$ be a flat
%$\Ou$ algebra which is a discrete valuation ring and is $\pi$-adically, 
%complete, equipped with an endomorphism $\phi: R \map R$ which lifts
%the $q$-power Frobenius mod $\pi$. 
%As for example, $(\Ou,\mfrak{p}) =
%(\Z,p)$ and $R$ can be taken to be the ring of integers of a $p$-dic 
%field $K$ which is a  finite extension of $\Q_p$ with $\phi$ a fixed 
%lift of Frobenius on $R$. 

For any $\pi$-formal scheme $X$ over $S$, one defines the $n$-th arithmetic
jet functor as 
$$
J^nX (B) := X(W_n(B))
$$
where for any $\pi$-adically complete $R$-algebra $B$, $W_n(B)$ is the ring of
$\pi$-typical Witt vectors of length $n+1$ \cite{bor11a, drin76, joyal}.
By Section $(12.8)$ of \cite{bor11b} and Theorem $1.3$ of \cite{bps}, 
the functor $J^nX$ is representable
by a $\pi$-formal $S$-scheme, which we will continue to denote as $J^nX$. 
This is precisely the arithmetic jet space constructed by Buium in 
\cite{bui95}.

The $\pi$-typical Witt vectors admit two natural ring maps: the truncation map $T: 
W_n(B) \map W_{n-1}(B)$ given by 
$$
T(a_0,\dots, a_n) = (a_0,\dots, a_{n-1}),
$$
and the Frobenius map $F : W_n(B) \map W_{n-1}(B)$ given by 
$$
F(a_0,\dots, a_n) = (a_0^q + \pi a_1,\dots ),
$$
for all $(a_0,\dots, a_n) \in W_n(B)$.
The above two ring maps induce morphisms $u$ (projection map)
and $\phi$ (Frobenius map) respectively from
$J^nX$ to $J^{n-1}X$. The system $J^*X =\{J^nX\}_{n=0}^\infty$ is known
as the {\it canonical prolongation sequence}. Note that if $G$ is a 
group object, then $J^nG$ is also naturally a group object by its very definition.

\color{black}
If $G$ is a smooth $\pi$-formal group scheme over $S$, the projection map
$u: J^nG \map G$ is a surjection of $\pi$-formal group schemes 
\cite{bor11b, bui00}. Let $N^nG$ be the kernel of the morphism $u$ and
hence we have the following canonical short exact sequence of $\pi$-formal
group schemes
\begin{align}\label{short1}
0 \map N^nG \stk{\iota}{\map} J^nG \stk{u}{\map} G \map 0.
\end{align} 
Combining Theorem $4.3$ in  \cite{BS_b} and Theorem $1.2$ in \cite{PS-2}, 
one shows that for all $n$ we have a canonical isomorphism
\begin{align}
N^nG \simeq J^{n-1}(N^1G),
\end{align}
and the associated Frobenius morphism for the canonical prolongation 
sequence $\{N^*G\}_{n=0}^\infty$, denoted $\fra: N^nG \map N^{n-1}G$ 
satisfies
\begin{align}
\label{phi-fra}
\phi^{\circ 2} \circ \iota = \phi \circ \iota \circ \fra.
\end{align}
Set $\bX_n(G):= \Hom(J^nG, \hG)$ to be the $R$-module of additive 
characters of the $n$-th arithmetic jet space $J^nG$. The 
$R$-module $\bX_n(G)$ is called the
module of {\it delta characters} of $G$ of order $\leq n$. A delta character
$\Theta$ is said to have exact order $n$ if $\Theta \in \bX_n(G) \backslash 
u^*\bX_{n-1}(G)$ where $u^*$ is the pullback of delta characters by
$u:J^nG \map J^{n-1}G$.
A delta character $\Theta$ is called {\it primitive} if it cannot be written
as a sum of delta characters of lower order via various pullbacks by $\phi^*$.

Let $\bX_\infty(G)$ be the direct limit of $\bX_n(G)$ with respect to 
pulling back by $u^*$. Also pulling back a delta character $\Theta$ by $\phi^*$ 
endows $\bX_\infty(G)$ with a semilinear action of $\phi^*$ satisfying
$\phi^* \cdot r = \phi(r) \cdot \phi^*$ for all $r \in R$, see 
$(3.11)$ in \cite{BS_b}.

We define the $R$-module of {\it primitive} delta characters $\bXp(G)$ as 
$$
\bXp(G) := \frac{\bX_\infty(G)}{\phi^* \bX_\infty(G)}.
$$
By Theorem $8.7$ in \cite{PS-2}, $\bXp(G)$ is a free $R$-module of rank 
$g$ (the result over the fraction field $K$ was proved in \cite{BS_b}).
It can also be shown that $\bXp(G)$ admits an $R$-basis consisting of images of
primitive delta characters.

For each $n$, consider the quotient $R$-module defined by
$$
\bH_n(G) := \frac{\Hom(N^nG,\hG)}{(\phi \circ \iota)^* \bX_{n-1}(G)}.
$$
We define the $R$-module
$$
\bH_\d(G) := \varinjlim_{u^*} \bH_n(G).
$$
The lift of Frobenius $\fra$ induces the operator $\fra^*$ on 
$\bH_\d(G)$ (recalled in Section \ref{pre}) and we have the following short exact 
sequence of $R$-modules
\begin{align}
\label{Hdexact-1}
0 \map \bXp(G) \map \Hd(G) \map \bI(G) \map 0,
\end{align}
where $\bI(G)$ is an $R$-submodule of $\Ext(G,\hG)$ (see \cite{BS_b} and 
\cite{PS-2} for details).

\subsection{Statement of main results.}
A {\it filtered module} over $R$ is an $R$-module $M$ endowed with a system of
decreasing $R$-submodules $M^\bullet =\{M^i\}_{i \in \ZZ}$, that is 
$M^{i+1} \subset M^i$
for all $i \in \ZZ$. The category of such objects will be denoted as 
$\mathrm{Fil}_R$. We define a {\it (non-degenerate) isocrystal} over $K$ to be
a finite dimensional $K$-vector space $D$ equipped with a (bijective) 
Frobenius-semilinear endomorphism $\phi_D: D \map D$. A {\it filtered isocrystal}
over $K$ is a triple $(D,\phi_D, D^\bullet\})$ where $(D,\phi_D)$ is an isocrystal
over $K$ and $(D,D^\bullet\})$ is an object in $\mathrm{Fil}_K$.

Consider the filtered isocrystal 
$$
\mathrm{\Iso}(\bH_\d(G)_{K}) := (\bH_\d(G)_{K}, \fra^*, 
\bH_\d(G)_{K}^\bullet),
$$
where $\bH_\d(G)_{K}^\bullet$ is the filtration given by 
$\bH_\d(G)_{K} \supset \bXp(G)_{K} \supset \{0\}$.
Our first main result is the following:

\begin{theorem}
\label{bijfra-1}
Let $G$ be a semi-abelian $\pi$-formal group scheme of relative dimension $g$ 
over $S$. 
Then the operator $\fra^*: \bH_\d(G)_K \map \bH_\d(G)_K$ is a 
bijection.

Hence $\mathrm{\Iso}(\bH_\d(G)_K)$ is a non-degenerate filtered isocrystal.

\end{theorem}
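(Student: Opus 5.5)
Since $\bH_\d(G)_K$ is finite dimensional over $K$ (by the exact sequence (\ref{Hdexact-1}), as an extension of $\bI(G)_K\subset\Ext(G,\hG)_K$ by the rank-$g$ module $\bXp(G)_K$), and since $\phi=\mathbbm{1}$ on $R$ so that $\fra^*$ is $K$-linear, it suffices to show that $\fra^*$ is injective on $\bH_\d(G)_K$. I will therefore take a class $[\Psi]\in\bH_n(G)_K$, with $\Psi\in\Hom(N^nG,\hG)$, such that $\fra^*[\Psi]=0$ in $\bH_\d(G)_K$, and show that $[\Psi]=0$.

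\textbf{Step 1 (translate to characters).} The condition $\fra^*[\Psi]=0$ means that $\Psi\circ\fra$, up to a pullback by $u^*$, equals $(\phi\circ\iota)^*\Theta$ for some $\Theta\in\bX_{m}(G)_K$. Using (\ref{phi-fra}), $\phi\circ\iota\circ\fra=\phi^{\circ2}\circ\iota$, and both sides are compared on $N^{n+1}G$. The key input is the identification $N^{n}G\simeq J^{n-1}(N^1G)$. Through it, $\fra$ becomes the Frobenius $\phi$ of the prolongation sequence of $N^1G$. Since $u$ and $\phi$ of that sequence are jointly "faithfully flat enough", a character vanishing on the kernel structure descends.

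\textbf{Step 2 (reduction to the filtration pieces).} I will use the sequence (\ref{Hdexact-1}) and check that $\fra^*$ preserves $\bXp(G)_K$. One shows that $\fra^*$ acts on $\bI(G)_K$ compatibly with a natural operator on $\Ext(G,\hG)$. By the five lemma it then suffices to prove injectivity on each of $\bXp(G)_K$ and $\bI(G)_K$. On $\bXp$, the operator induced by $\fra^*$ is, via (\ref{phi-fra}), the map induced by $\phi^*$ on $\bX_\infty/\phi^*\bX_\infty$ shifted by one order. Here I would use the structure theorem for $\bX_\infty(G)_K$: it is generated over $K[\phi^*]$ by $g$ primitive characters $\Theta_1,\dots,\Theta_g$, each satisfying a relation with leading coefficient invertible in $K$. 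In the semi-abelian case the order is $\le 2$ for the abelian part and $1$ for the torus part. Invertibility of the leading coefficient of this "Picard--Fuchs" type relation, after inverting $\pi$, is what makes the operator injective.

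\textbf{Main obstacle.} I expect the hardest step to be controlling the $\bI(G)_K$ part and showing that no nonzero extension class is killed. My plan is to use the pushout $0\to\hG\to E\to G\to0$ and lift to jet spaces $J^nE$. The vanishing of $\fra^*$ would give a splitting of $J^{n+1}E$ over $N^{n+1}G$ coming from $\phi$, and I would then descend this splitting by the Frobenius, using that $\phi$ on $W(k)$ is bijective and that $\pi$ is inverted. The second thing to verify is the finite-dimensionality of $\bI(G)_K$. This should come from $\dim\Ext(G,\hG)_K\le g$, via $\Ext(G,\hG)\simeq H^1(G,\Ou)$ for the abelian part, while the torus part contributes zero. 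Granting these two points, injectivity together with finite dimension gives bijectivity, and $\mathrm{\Iso}(\bH_\d(G)_K)$ is a non-degenerate filtered isocrystal.
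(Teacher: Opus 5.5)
\textbf{There is a genuine gap: Step~2 rests on a false claim, and the ingredient that actually makes $\fra^*$ injective is missing.}

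Your reduction to injectivity is fine: $\Hd(G)_K$ is finite dimensional, and $\fra^*$ is $K$-linear because $\phi=\mathrm{id}$. The problem is that $\fra^*$ does \emph{not} preserve $\bXp(G)_K$. So it induces no operator on $\bI(G)_K$, and the five lemma has nothing to act on.

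\begin{itemize}
\item Since $\iota^*\phi^*\Theta$ vanishes in $\Hd(G)$, Proposition~\ref{diff}(1) gives $\fra^*[\iota^*\Theta]=-\gamma_\Theta[\Psi_1]$. So $\fra^*$ sends $\bXp(G)_K$ into the span of the classes of the components of $\Psi_1$, via the matrix $-\Gamma$.
\item Concretely, for a non-CL elliptic curve, Lemma~\ref{theta-lemma} gives $\bXp(A)_K=K(\Psi_2-a_q\Psi_1)$ and $\fra^*(\Psi_2-a_q\Psi_1)\equiv-q\Psi_1\notin\bXp(A)_K$.
\item This failure is forced. By Theorem~\ref{Iso-crys-11}, $\fra^*$ matches $\Fcris$, which does not preserve the Hodge filtration when there is no Frobenius lift.
\end{itemize}

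Several other statements in Steps~1 and~2 are also wrong or empty.
\begin{itemize}
\item The map induced by $\phi^*$ on $\bX_\infty(G)/\phi^*\bX_\infty(G)$ is identically zero, so it cannot be your injective operator.
\item $\bX_\infty(G)_K$ is free over $K\{\phi^*\}$ on the primitive basis. There is no relation with invertible leading coefficient to exploit.
\item Your bound ``order $\le 2$ for the abelian part'' is not justified beyond elliptic curves; in general one only has $o_i\le g+1$.
\item The ``faithfully flat enough'' descent in Step~1 is not an argument. It also overlooks exactly the defect that matters: $\phi\circ\iota-\iota\circ\fra=\Delta\circ u\neq 0$ (Theorem~\ref{comfact}).
\end{itemize}

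The missing idea is the invertibility of $\Gamma$ over $K$ (Theorem~\ref{isoupsilon}). This is the only place the semi-abelian hypothesis really enters, and your proposal never uses the special fibre. The argument runs as follows.
\begin{itemize}
\item Frobenius on $\ov{G}$ satisfies $P(F)=F^m+\cdots+b_1F+q^h=0$ with $q^h\neq0$ (Proposition~\ref{Frob-char}).
\item Hence $P(\phi)\colon J^mG\to G$ lands in $G(\pi)$, and so factors through $N^1G$ by Proposition~\ref{phiiso}.
\item The resulting characters $\tTheta_j$ satisfy $\Delta^*\tTheta_j=q^he_j\Psi_1$, which forces $C\Gamma=q^h\mathbf{1}_g$.
\end{itemize}

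The paper combines this with the invertibility of $\Lambda_{\mup}$ (Proposition~\ref{Lambdamu}). It writes $\fra^*$ on $K\langle\Psi_1,\dots,\Psi_{\mup}\rangle$ as a block companion matrix with corner block $-\Lambda_{\mup}^{-1}\Gamma$ and concludes that its determinant is nonzero.

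Alternatively, once $\Gamma$ is invertible your own Step~1 closes directly.
\begin{itemize}
\item After applying $u^*$, suppose $\fra^*x=\iota^*\phi^*\Theta$ with $x\in\Hom(N^nG,\hG)_K$ and $\Theta\in\bX_n(G)_K$. Then $\fra^*(x-\iota^*\Theta)=\gamma_\Theta\Psi_1$.
\item Since $\fra^*$ shifts $\Psi_i\mapsto\Psi_{i+1}$, comparing $\Psi_1$-components gives $\gamma_\Theta=0$ and $x=\iota^*\Theta$.
\item Write $\Theta$ in the primitive basis. Using $\gamma_{\phi^*\Theta'}=0$ (Proposition~\ref{diff}(2)) and the invertibility of $\Gamma$, you get $\Theta\in\phi^*\bX_{n-1}(G)_K$, hence $[x]=0$.
\end{itemize}
Without the special-fibre input that makes $\Gamma$ invertible, no descent along $\phi$ will yield injectivity.
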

The above result is proved in Section \ref{non-deg} as Theorem \ref{bijfra}.
When $A$ is a $\pi$-formal abelian scheme over $S$ the short exact sequence 
(\ref{Hdexact-1}) admits a natural map to the Hodge sequence of $A$ 
(recalled in Section \ref{Ext-of-gp-sch} or see ($6.12$)
of \cite{BS_b}) as filtered $R$-modules:
$$\xymatrix{
0 \ar[r] &  \bXp(A) \ar[d]_\Upsilon \ar[r] &  \Hd(A) \ar[d]_\Phi \ar[r] &  
\bI(A) \ainj{d} \ar[r] & 0 \\
0 \ar[r] & H^0(A,\Omega_A) \ar[r] & \Hcr(A) \ar[r] & H^1(A,\Ou_A) \ar[r] & 0. 
}$$

Let $\Upsilon_K: \bXp(A)_K \map H^0(A,\Omega_A)_K$ be the induced 
$K$-linear map obtained upon tensoring with $K$. 

The above map generalizes to
$\Upsilon_K : \bXp(G)_K \map (\Lie G)^*_K$ where $(\Lie G)^* :=
\Hom(\Lie G,\hG)$ for any semi-abelian $\pi$-formal scheme $G$
(described in \eqref{Upsilon-new} of Section \ref{SFrob}).
Our next result is the following:

\begin{theorem}
\label{isoupsemi-1}
Let $G$ be a semi-abelian $\pi$-formal group scheme of relative 
dimension $g$ over $S$. 
Then the  $R$-linear map $\Upsilon: \bXp(G) \map (\Lie G)^*$ is an 
injective morphism of $R$-modules that has $\pi$-power torsion cokernel.

In particular, $\Upsilon_K: \bXp(G)_K \map (\Lie G)^*_K$ is an isomorphism
of $K$-vector spaces.
\end{theorem}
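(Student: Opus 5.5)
The strategy is to make $\Upsilon$ explicit on the level of delta characters via linearization at the identity, and then compare ranks. Recall that any delta character $\Theta\in\bX_n(G)=\Hom(J^nG,\hG)$ is, in coordinates, a restricted power series in the jet coordinates $(x,x',\dots,x^{(n)})$. In the Witt/ghost description, $\Theta$ has a Fermat-type expansion
\begin{align}
\Theta \;=\; \frac{1}{\pi^{m}}\sum_{i=0}^{n}\lambda_i\,\phi^{i}\bigl(\ell_G\bigr)
\end{align}
after tensoring with $K$. Here $\ell_G$ is the logarithm of the formal group of $G$ (or of $\hat G$), and the $\lambda_i$ are $K$-linear maps on $(\Lie G)_K$. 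I expect $\Upsilon(\Theta)$, as defined in \eqref{Upsilon-new}, to be the linear term $\lambda_0\circ d\ell_G|_0$, i.e.\ the differential of $\Theta$ at the identity along the zero section $G\hookrightarrow J^nG$ (up to the normalising power of $\pi$). The first step is therefore to verify this description, and then to check that it kills $\phi^*\bX_\infty(G)$. The latter holds because $\phi^*\Theta$ involves only $\phi(\ell_G)$ and higher terms, whose linear parts are divisible by $\pi$ and, after the correct normalisation, vanish in the quotient. With this, $\Upsilon$ is well defined and $R$-linear.

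The second step is injectivity. Suppose $\Theta$ represents a class in $\bXp(G)$ with $\Upsilon(\Theta)=0$, so $\lambda_0=0$. I would show that $\Theta=\phi^*\Theta_1$ for some $\Theta_1\in\bX_{n-1}(G)_K$, and then descend to $R$ using the freeness of $\bXp(G)$ (Theorem 8.7 of \cite{PS-2}) together with saturation. To produce $\Theta_1$, restrict $\Theta$ to $N^nG\simeq J^{n-1}(N^1G)$ and use \eqref{phi-fra}: a character of $J^nG$ with vanishing $\lambda_0$ should factor through $\phi:J^nG\map J^{n-1}G$ after inverting $\pi$. This factorization is checked on the generic fibre, where $J^nG_K$ is described by ghost components and $\phi$ is a projection onto the last $n$ ghost components. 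There, a homomorphism to $\hG$ not involving the zeroth ghost component factors through $\phi$, and one must confirm that the resulting $\Theta_1$ is integral, i.e.\ a genuine delta character. This integrality check is the step I expect to be the main obstacle. My first attempt would be a Dwork-type lemma: the coefficients of $\Theta_1$ satisfy the Frobenius congruences because those of $\Theta$ do, which forces $\Theta_1$ to lie in $\bX_{n-1}(G)$ up to a bounded power of $\pi$. That bounded power is harmless in $\bXp(G)$ tensored with $K$. For the integral injectivity, the fact that $\bXp(G)$ is torsion-free then suffices: if $\pi^a\Theta\in\phi^*\bX_\infty$, then $\Theta$ represents a torsion class and hence is zero.

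The third step is the cokernel. By Theorem 8.7 of \cite{PS-2}, $\bXp(G)$ is free of rank $g$, and $(\Lie G)^*$ is free of rank $g$. An injective $R$-linear map between free modules of equal finite rank over the DVR $R$ has finite-length cokernel, which is therefore $\pi$-power torsion. Tensoring with $K$ gives that $\Upsilon_K$ is an isomorphism. For semi-abelian $G$, the torus part $\hGm^r$ and the abelian quotient should be handled compatibly: the primitive characters of the torus come from $\log$ and map to $dt/t$. One can therefore either run the argument uniformly with the formal logarithm, or use the exact sequence of $\bXp$ and $(\Lie)^*$ together with the five lemma.
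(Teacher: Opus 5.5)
Your Steps 1 and 3 are fine. Indeed $\Upsilon(\Theta)$ is exactly $\lambda_0$, the $\bx_0$-block of $D\Theta$ in Witt coordinates, because $Dw=\mathrm{diag}(1,\pi,\dots,\pi^n)$. The problem is that Step 2 carries the entire content of the theorem. Since $\bXp(G)_K$ and $(\Lie G)^*_K$ both have dimension $g$, injectivity of $\Upsilon_K$ is equivalent to the statement, and you only sketch it, leaving the decisive point open. Concretely, two things are missing.
First, the expansion $\Theta=\sum_i\lambda_i\,\ell_G\circ w_i$ is an identity of power series on the formal group of $J^nG$ at $e$. The ghost map $w\colon J^nG\to\prod^n_\phi G$ is not an isomorphism on generic fibres: on $\mathcal{O}_L$-points it is injective, with image cut out by congruences. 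So $\lambda_0=0$ only shows that $\Theta$ vanishes on the formal germ at $e$ of $\ker(\phi\colon J^nG\to J^{n-1}G)$, not on all of $\ker\phi$.
Second, even granting a factorization on points or on formal groups, you need $\Theta_1$ to be a morphism of $\pi$-formal group schemes $J^{n-1}G\to\hG$. A Dwork-type congruence on coefficients at $e$ speaks at best to integrality of a power series at the identity. It says nothing about the existence of $\Theta_1$ on all of $J^{n-1}G$, so ``up to a bounded power of $\pi$'' is not the real issue.

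The route can be completed, and here is one way.
The map $\phi\colon J^nG\to J^{n-1}G$ is faithfully flat: modulo $\pi$ it is $u$ composed with the $q$-power Frobenius of the smooth $k$-scheme $J^nG\otimes_R k$ (recall $k=\mathbb{F}_q$). So by fppf descent modulo each $\pi^m$, $\Theta=\Theta_1\circ\phi$ with $\Theta_1\in\bX_{n-1}(G)$ as soon as $\Theta$ kills $\ker\phi$ scheme-theoretically. Such a $\Theta_1$ is integral automatically, so $[\Theta]=0$ already in $\bXp(G)$.
To see that $\Theta$ kills $\ker\phi$, let $P\in\ker\phi(\mathcal{O}_L)$ with $L/K$ finite. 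Since $J^nG(\mathcal{O}_L)$ is finite modulo the formal neighbourhood of $e$, some $N\geq 1$ puts $NP$ there. For $i\geq 1$ we have $w_i(NP)=N\,w_{i-1}(\phi(P))=0$, so $N\Theta(P)=\lambda_0\,\ell_G(w_0(NP))=0$. Since $\ker\phi$ is $R$-flat with smooth generic fibre, this gives $\Theta|_{\ker\phi}=0$.

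This is a genuinely different proof from the paper's. The paper proves surjectivity rather than injectivity. It uses a polynomial $P$ with $P(F)=0$ on $\oG$ and constant term a power of $q$; this is exactly where semi-abelian enters, via Proposition~\ref{Frob-char}. It then uses the identification $N^1G\simeq G(\pi)$ of Proposition~\ref{phiiso} to factor $P(\phi)$ through $\mfg$. The characters $\tTheta_j=\mathrm{pr}_j\circ\Psi_1\circ\mfg$ satisfy $\Delta^*\tTheta_j=q^h e_j\Psi_1$, which gives $C\Gamma=q^h\mathbf{1}_g$.
The paper's argument yields an explicit bound on the cokernel, which yours does not. On the other hand, once completed, your argument uses no hypothesis on $G$ beyond the rank-$g$ input from \cite{PS-2}.

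One minor correction: the primitive character $\frac{1}{\pi}\log(\phi(t)/t^q)$ of $\hGm$ maps to $-\frac{q}{\pi}\,dt/t$, not to $dt/t$. This is why the cokernel need not vanish.
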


The above result is proved in Section \ref{SUpsilon} as Theorem \ref{isoupsemi}.
It is a natural and intriguing problem to determine the extent to which the 
$R$-module $\bXp(G)$ differs from $(\Lie G)^*$.

{\bf Canonical delta signature of abelian schemes.}
Let $A$ be a $\pi$-formal abelian scheme of relative dimension $g$ over $S$.
An abelian scheme $A$ over $S$ is said to have a {\it canonical lift (CL)}
if there exists a $\beta \in \End(A)$ such that $\beta \mod \pi$ is the
absolute $q$-power Frobenius on $A \otimes_R k$.
It follows from Theorem $7.6$ in \cite{BS_b} that there is an isomorphism 
$\bXp(A) \simeq R\langle \Theta_1,\dots, \Theta_g \rangle$ where for each 
$i=1, \dots ,g$, $\Theta_i$ is a primitive delta character 
of $A$ of exact order $o_i$ and satisfies $1 \leq o_i \leq g+1$.
Hence composing with the above isomorphism in Theorem \ref{isoupsemi-1} we obtain
$$
H^0(A,\Omega_A)_K \simeq \bXp(A)_K \simeq K\langle \Theta_1,\dots, \Theta_g\rangle,
$$
which gives an interesting character theoretic interpretation of $H^0(A,\Omega_A)_K$. 
Moreover note that under the above isomorphism, if 
$\Theta_\omega$ is the delta character associated to an invariant $1$-form $\omega \in
H^0(A,\Omega_A)_K$ then the assignment
$\omega \mapsto o(\Theta_\omega) \in \bb{Z}_{\geq 1}$ where $o(\Theta_\omega)$ is
the exact order of $\Theta$ (which is always $\geq 1$ for an abelian scheme $A$ over $S$), gives a canonical numerical invariant for $\omega$
via the above isomorphism.

Consider the ordered basis set $\mathbb{B} = \{\Theta_1,\dots ,\Theta_g\}$ of 
$\bXp(A)$ with the condition that $o(\Theta_i) \leq o(\Theta_{i+1})$ for all $i=1,
\dots , g-1$.
Consider the association
\begin{align}
A \mapsto \delta\mathrm{-Sign}(A) \in \ZZ_{\geq 1}^g,
\end{align}
where we define the 
{\it $\delta$-signature} of the abelian scheme $A$ over $S$ as
$\delta\mathrm{-Sign}(A) := (o(\Theta_1), \dots, o(\Theta_g))$. 
One can show
that $\delta\mathrm{-Sign}(A)$ is independent of the choice of the ordered
primitive basis $\mathbb{B}$ of $\bXp(A)$. This is a very interesting canonical
numerical invariant which depends on the deformation theoretic data of $A$.

For example, consider the case when $A$ is an elliptic curve over $S$. Then 
by Corollary $7.8$ of \cite{BS_b}, we have
\begin{align}
\delta\mathrm{-Sign}(A):= \left\{\begin{array}{l}
(1), \mbox{if $A$ has CL} \\
(2), \mbox{otherwise}.
\end{array} \right.
\end{align}

Consider the filtered isocrystal of the first crystalline
cohomology 
$$\mathrm{\Iso}(\Hcr(A)_{K}) := (\Hcr(A)_{K}, \Fc,
\Hcr(A)^\bullet_K),$$ 
where $\Hcr(A)^\bullet_K$ is the Hodge filtration $\Hcr(A)_K 
\supset H^0(A,\Omega_A)_K \supset \{0\}$ and $\Fc$ is the crystalline Frobenius
operator on $\Hcr(A)_K$.

Our next theorem is a comparison result between $\bH_\d(A)_K$ and the first
crystalline cohomology of $A$. 
%This can be viewed as a character theoretic
%nterpretation of the first crystalline cohomology of $A$.

\begin{theorem}\label{Iso-crys-11}
Let $A$ be an elliptic curve over $S$. 
%Then $\mathrm{\Iso}(\Hd(A)_{K})$ is
%a weakly admissible object in the category of filtered isocrystals.
\begin{enumerate}
\item If $A$ is a non-CL elliptic curve then
$$
\mathrm{\Iso} (\Hd(A)_{K}) \simeq \mathrm{\Iso} (\Hcr(A)_{K})
$$
in the category of filtered isocrystals.

\item  If $A$ has CL then $$\mathrm{\Iso} (\Hd(A)_{K}) \simeq 
\mathrm{\Iso} (H^0(A, \Omega_A)_{K})$$ 
in the category of filtered isocrystals 
where $\mathrm{\Iso} (H^0(A,\Omega_A)_{K})$
is the one dimensional sub-object of $\mathrm{\Iso}(\Hcr(A)_{K})$.
\end{enumerate}
\end{theorem}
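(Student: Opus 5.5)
We work with the two cases separately, using the structure of $\Hd(A)$ for an elliptic curve together with Theorems \ref{bijfra-1} and \ref{isoupsemi-1}. For an elliptic curve $g=1$, so $\bXp(A)_K$ is one-dimensional. By Theorem \ref{isoupsemi-1}, $\Upsilon_K:\bXp(A)_K\map H^0(A,\Omega_A)_K$ is an isomorphism. By the exact sequence (\ref{Hdexact-1}), $\dim_K \Hd(A)_K = 1+\dim_K \bI(A)_K$ with $\bI(A)_K\subset \Ext(A,\hG)_K\simeq H^1(A,\Ou_A)_K$, so $\dim_K\Hd(A)_K\in\{1,2\}$. The basic dichotomy, recalled from \cite{BS_b} (Corollary 7.8 and the computation of $\bI(A)$), is this:
\begin{itemize}
\item if $A$ has CL, the primitive character $\Theta$ has order $1$ and $\bI(A)=0$;
\item if $A$ is non-CL, $\Theta$ has order $2$ and $\bI(A)_K=H^1(A,\Ou_A)_K$.
\end{itemize}
I would first establish this dichotomy by analysing $\Hom(N^nA,\hG)$ modulo $(\phi\circ\iota)^*\bX_{n-1}(A)$ for $n=1,2$.

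Next, in the non-CL case, $\Phi_K:\Hd(A)_K\map\Hcr(A)_K$ is a map of two-dimensional spaces. It restricts to the isomorphism $\Upsilon_K$ on the filtration step, and it induces the inclusion $\bI(A)_K\inj H^1(A,\Ou_A)_K$, which is an isomorphism here. The five lemma then shows $\Phi_K$ is a filtered isomorphism. It remains to prove that $\Phi_K$ intertwines $\fra^*$ with $\Fc$; this is the main obstacle. Rather than appeal to the general compatibility result of \cite{Pandit26}, I would compute both operators on explicit bases. On $\Hd(A)_K$, take the basis $\{\Theta,\Psi\}$, where $\Psi$ lifts a generator of $\bI(A)$. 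Then $\fra^*$ is determined by the linear relation $\phi^{*2}\Theta+\lambda_1\phi^*\Theta+\lambda_0\Theta=0$ satisfied by the order-$2$ character, using (\ref{phi-fra}) to translate $\phi^*$ on $J^2A$ into $\fra^*$ on $N^2A$. On $\Hcr(A)_K$, the matrix of $\Fc$ in a basis adapted to the Hodge filtration is determined by the characteristic polynomial $x^2-a_px+p$, which is read off the formal group law / Honda system of $A$. The key identity to check is that the coefficients $\lambda_i$ of the delta character (computed by Buium in terms of the logarithm of the formal group) match the trace and determinant $a_p$, $p$ up to the normalisation by $\phi=\mathbbm{1}$. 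I would prove this by expressing $\Theta$ via the formal logarithm $\log_A$ and applying Honda's theory, as done in \cite{PS-2} over $\ZZ_p$. Since $\phi$ is the identity on $R=W(k)$, that argument should go through verbatim. Alternatively, it suffices to produce some filtered isomorphism of isocrystals, not necessarily $\Phi_K$; then one only needs the Newton data and the Hodge line to match, and weak admissibility considerations settle the rest.

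In the CL case, $\Hd(A)_K=\bXp(A)_K$ is one-dimensional, with filtration $\Hd\supset\Hd\supset 0$. The operator $\fra^*$ is bijective by Theorem \ref{bijfra-1}, and it acts by the scalar $\lambda$ coming from $\phi^*\Theta=\lambda\Theta$. By Berthelot (Theorem 3.17 in \cite{BO}), $H^0(A,\Omega_A)_K$ is $\Fc$-stable, and $\Fc$ acts on it by the eigenvalue attached to the Frobenius lift $\beta$. The final step is to identify $\lambda$ with this eigenvalue. Since $\Theta$ is essentially $\log_A$ and $\phi^*$ corresponds to $\beta^*$ on the order-$1$ character, the two scalars agree up to the normalisation, and $\Upsilon_K$ gives the required isomorphism of filtered isocrystals.
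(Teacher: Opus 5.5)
Your proposal has a genuine gap in the non-CL case, and your CL argument is only heuristic.

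\textbf{Non-CL case: route and criterion.} Your main route is to show that $\Phi_K$ itself intertwines $\fra^*$ and $\Fc$. That is the stronger compatibility statement the paper explicitly defers to a later paper, and the method you sketch cannot prove it. Checking that trace and determinant agree only shows the two operators have the same characteristic polynomial. That says nothing about whether $\Phi_K(\fra^*x)=\Fc(\Phi_K x)$. Likewise, the matrix of $\Fc$ in a Hodge-adapted basis is not determined by its characteristic polynomial unless the basis is $\{w,\Fc w\}$. Note also that the relevant polynomial is $t^2-a_qt+q$, not $x^2-a_px+p$, since $\phi$ lifts the $q$-power Frobenius.

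Your fallback is the right target, but the criterion you give is wrong. Since $\phi=\mathbbm{1}$, both operators are $K$-linear, so Newton slopes do not determine the isomorphism class: two ordinary curves with different $a_q$ have identical Newton and Hodge data. Weak admissibility plays no role. What works is Proposition \ref{filiso}. Suppose both characteristic polynomials are $t^2-a_qt+q$ and neither Hodge line is Frobenius-stable. Then for nonzero $v\in\bXp(A)_K$ and $w\in H^0(A,\Omega_A)_K$, the bases $\{v,\fra^*v\}$ and $\{w,\Fc w\}$ give the same companion matrix, and $v\mapsto w$, $\fra^*v\mapsto\Fc w$ is the filtered isomorphism. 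So you must show three things:
\begin{enumerate}
\item $\Fc(H^0(A,\Omega_A)_K)\neq H^0(A,\Omega_A)_K$, which is Berthelot--Ogus for non-CL curves;
\item $\fra^*(\bXp(A)_K)\neq\bXp(A)_K$, which follows from $\iota^*\Theta_2=\Psi_2-\lambda\Psi_1$, since $\fra^*(\Psi_2-\lambda\Psi_1)\equiv-\gamma\Psi_1$ in $\Hd(A)_K$ with $\gamma\neq0$;
\item crucially, $\lambda=a_q$ and $\gamma=q$ in $\iota^*\phi^*\Theta_2=\Psi_3-\lambda\Psi_2+\gamma\Psi_1$.
\end{enumerate}

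\textbf{The missing computation.} For point (3) you give no argument. You defer to a Honda-theoretic computation that should go through ``verbatim since $\phi=\mathbbm{1}$''. But $\phi$ was already the identity on $\ZZ_p$; what is new here is general $\pi$ and $q$, and that is exactly what would need checking.

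The relations you write are also false as stated. Neither $\phi^{*2}\Theta+\lambda_1\phi^*\Theta+\lambda_0\Theta=0$ nor, in the CL case, $\phi^*\Theta=\lambda\Theta$ can hold in $\bX_\infty(A)$. There $\Theta,\phi^*\Theta,\phi^{*2}\Theta,\dots$ have strictly increasing exact orders and are linearly independent. The relations live in $\Hom(N^nA,\hG)$ after applying $\iota^*$, and they become relations for $\fra^*$ only in the quotient $\Hd(A)_K$.

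The missing idea, which handles both cases, is reduction mod $\pi$ together with Proposition \ref{phiiso}, namely $\phi\circ\iota\colon N^1A\simeq A(\pi)$.
\begin{itemize}
\item Non-CL: $P(\bar F)=0$ on $\bar A$, so $P(\phi)\colon J^2A\to A$ lands in $A(\pi)$ and factors as $(\phi\circ\iota)\circ\mfg$. Then $\Psi_1\circ\mfg$ is the order-$2$ character $\Theta_2$, and by (\ref{fphi}) one gets $\iota^*\phi^*(\Psi_1\circ\mfg)=\Psi_1\circ P(\fra)=\Psi_3-a_q\Psi_2+q\Psi_1$. This is Lemma \ref{theta-lemma}.
\item CL: $\phi-\beta$ reduces to $\bar F-\bar F=0$, so it factors through $N^1A$ in the same way. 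Theorem \ref{formaliso} gives $\log_A\circ\beta=D\beta\circ\log_A$, hence $\Psi_1\circ\beta=D\beta\cdot\Psi_1$. This yields $\iota^*\phi^*\Theta_1=\Psi_2-D\beta\,\Psi_1$ (Lemma \ref{gamma-rho}). So $\fra^*$ acts on $\Hd(A)_K=K\Psi_1$ by exactly $D\beta$, the scalar by which $\Fc$ acts on $H^0(A,\Omega_A)_K$.
\end{itemize}

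``Agreeing up to normalisation'' would not suffice in the CL case. A one-dimensional isocrystal with $K$-linear Frobenius is determined by its scalar, so the equality must be exact. Once it is, $\Upsilon_K$ is the isomorphism.
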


The above result generalizes Theorem $1.7$ in \cite{PS-2} in the case when
$K$ is a finite extension of $\QQ_p$.
Such comparison theorems can be viewed as a character-theoretic interpretation
of the crystalline cohomology.
%Hence it is also a very pertinent question to reconcile our results
%with the theory of prismatic cohomology of Bhatt and Scholze in \cite{bhsch}.

%\cred
%\vspace{.5cm}
%\noindent {\bf Plan of the Paper.} 
%In Section \ref{pre}, we recall the theory of delta characters and the construction of the semilinear object. Then, we recall the explicit relation between the Frobenius on jet spaces and those on their kernels in Section \ref{SFrob}. The difference between these two Frobenius morphisms at the level of characters allows us to describe the $\Upsilon$ map explicitly in a suitable basis. In Section \ref{mod-p}, we establish the compatibility between jet space Frobenius and the Frobenius of the special fiber of a scheme. This allows us to use the non-degeneracy of the Frobenius on the special fiber to derive the invertibility of $\Upsilon$, proving Theorem \ref{isoupsemi-1} in Section \ref{SUpsilon}. In Section \ref{non-deg}, we explicitly describe the matrix of $\mff^*$ with respect to a suitable basis (extending a basis of primitive characters), showing it has a companion-like matrix form composed of small block matrices. Moreover, the top-right corner block turns out to be the product of an invertible matrix and the matrix of $\Upsilon$, which implies the non-degeneracy of $\mff^*$, thereby proving Theorem \ref{bijfra-1}. In the final section, Section \ref{comparison}, we establish the comparison isomorphism between the delta isocrystal and crystalline cohomology for elliptic curves, proving Theorem \ref{Iso-crys-11}.
%\\
\cblue

%\section{Notation}
%\label{notation}
%We collect here some notations fixed throughout the paper.
%\begin{align}
%	p &= \text{a prime number} \\
%\mcal{O}&= \text{a Dedekind domain}\\
%	\mfrak{p}&= \text{a fixed prime ideal of}~ \mcal{O}\\
%	\pi &= \text{a generator of }\mfrak{p} \mathcal{O}_{\mfrak{p}} \\
%	k &= \text{the residue field of $\mcal{O}$ at $\pi$ with cardinality $q$} \\
%	R &= \text{a fixed $\pi$-adically complete $\mcal{O}$-algebra} \\
%	\phi &= \text{an endomorphism of $R$ satisfying $\phi(x) \equiv x^q \bmod \mfrak{p}$, for all $x \in R$} \\
%S &= \Spf R\\
%	M_K &= K\otimes_R M, \text{ for any $R$-module $M$ and } 
%		K=\mathrm{Frac}(R) \\
%	\mfrak{m} &= \text{the maximal ideal of }R\\
%	v_{\pi} &= \text{the valuation on $R$ normalized such that }v_{\pi}(\pi)=1\\
%	e &= \text{the absolute ramification index $v_{\pi}(p)\leq p-2$} \\
%	l &= \text{the residue field of $R$} \\
%G &= \text{a commutative smooth $\pi$-formal group scheme over $\Spf R$}\\
%A &= \text{a $\pi$-formal abelian scheme over $\Spf R$}\\
%a_{q}&= q+1-\#(A(\FF_{q})) \text{ when $A$ is an elliptic curve over 
%$W(\FF_q)$.}\\
%\end{align}
%\cred By a \emph{$\pi$-formal scheme}, we will mean a $\pi$-adic formal scheme over $S$. A semi-abelian scheme $G$ over $R$ is a smooth commutative $\pi$-formal group scheme such that $G$ is an extension of an abelian scheme $A$ by a torus $T$ over $R.$
\cblue

\section{Preliminaries on Delta Geometry}\label{pre}

Let $\Ou$ be a Dedekind domain of characteristic $0$ and $\mfrak{p}$ a 
non-zero prime ideal with
$k$ as the residue field and $q$ be the cardinality of $k$ where $q$ is a
power of a prime $p.$  Let $\pi$ be one of the uniformizers of $\mfrak{p},$ i.e. the image of $\pi$ generates the maximal ideal $\mfrak{p}\Ou_{\mfrak{p}}.$
For any $\Ou$-algebra $B$ and $B$-algebra $A$, we define a $\pi$-derivation 
$\d$ as a set-theoretic map $\d:B \map A$ that satisfies  for 
all $x,y \in B$,

$(i)~ \d(1) = 0$

$(ii)~ \d (x+y) = \d x + \d y + C_\pi(u(x),u(y)) $

$(iii)~ \d(xy) = u(x)^q \d y + u(y)^q \d x + \pi \d x \d y$

where $u:B \map A$ is the structure map and 
$$C_\pi(X,Y) = 
\frac{X^q+Y^q -(X+Y)^q}{\pi}.
$$ 
Given such a $\pi$-derivation $\d$, define $\phi(x):= u(x)^q + \pi\d x$
which is then a ring homomorphism satisfying
$$
\phi(x) \equiv u(x)^q \bmod \mfrak{p}.
$$
We will call such a $\phi$ a {\it lift of Frobenius} with respect to $u$. 
Let $R$ be an $\Ou$-algebra which is a discrete valuation ring and is $\pi$-adically
complete with residue field $l$ and set $S = \Spf R$.

\subsection{$\pi$-typical Witt vectors}
\label{pityp}
We recall some of the basic theory of $\pi$-typical Witt vectors.
The general theory of Witt vectors over Dedekind domains with finite residue
fields were introduced by Borger \cite{bor11a}. The theory over local fields
of characteristic $p$ was introduced earlier by Drinfeld \cite{drin76}.

For an $R$-algebra $B$ with structure map $R \stk{f}{\map} B$, for all 
$n \geq 0$ let $B^{\phi^n}$ be the 
$R$-algebra with the structure map $R \stk{\phi^n}{\map} R \stk{f}{\map} B$.
Given an $S$-scheme $X$,
we define $X^{\phi^n}$ as $X^{\phi^n}(B):= X(B^{\phi^n})$ for
any $R$-algebra $B$. The above functor is
represented by the base change of $X$ over the map $\phi^n:S \map S$ given by
$X^{\phi^n} = X\times_{S,\phi^n} S$. 

We define the {\it ghost rings} $\prod_\phi^n B := B \times B^\phi \times 
\cdots \times B^{\phi^n}$ and $\prod_\phi^\infty B := B \times B^\phi \times 
\cdots $. For all $n \geq 1$, consider the {\it restriction} or 
{\it truncation} map on the ghost rings as $T_w : \prod_\phi^n B \map 
\prod_\phi^{n-1} B$ given by $T_w(w_0,\dots , w_n) := (w_0,\dots, w_{n-1})$.

Also consider the left-shift {\it Frobenius} operators $F_w: \prod_\phi^n B
\map \prod_\phi^{n-1}B$ given by $F_w(w_0,\dots, w_n) = (w_1,\dots, w_n)$. 
Note that $T_w$ is an $R$-algebra map whereas $F_w$ lies over the fixed
Frobenius endomorphism $\phi$ of $R$.

Define as a set $W_n(B) := B^{n+1}$, and the set-theoretic map
$w: W_n(B) \map \prod_\phi^n B$ by $w(x_0,\dots, x_n) := (w_0,\dots ,w_n)$
where for all $i \geq 0$,
$$
w_i = x_0^{{q}^i} + \pi x_1^{{q}^{i-1}}+ \cdots + \pi^i x_i
$$
are the {\it Witt polynomials} and $w$ is known as the {\it ghost} map.
We define the ring of $\pi$-typical Witt vectors of length $n+1$ as the 
following analogous theorem to the $p$-typical case (cf. pp. 171-192 in 
\cite{Mum-66}).

\begin{theorem}
For all $n \geq 0$, there exists a unique functorial $R$-algebra 
structure on $W_n(B)$ such that $w$ is a natural transformation of 
functors of $R$-algebras.
\end{theorem}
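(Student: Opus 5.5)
The plan is the classical Witt-vector argument, with $p$ replaced by $\pi$ and the $q$-power Frobenius lift $\phi$ built into the ghost components. Uniqueness is immediate when $\pi$ is a non-zero-divisor on $B$ (for instance $B$ flat over $\Ou$). Then the ghost map $w$ is injective: if $w(x)=w(y)$, induction on $i$ shows $\pi^i x_i = \pi^i y_i$, hence $x_i=y_i$. So any ring structure making $w$ a homomorphism is forced on such $B$. For general $B$, write $B$ as a quotient of a $\pi$-torsion-free $R$-algebra $B'$; a polynomial algebra over $R$ works since $R$ is flat over $\Ou$. Functoriality then forces the operations on $W_n(B)$ to be the images of those on $W_n(B')$, since $W_n(B')\to W_n(B)$ is surjective as a map of sets.

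For existence, it suffices to treat the universal case. Take $A=R[X_0,\dots,X_n,Y_0,\dots,Y_n]$, and show that there are polynomials $S_i,P_i,N_i\in R[X,Y]$ with $w(S)=w(X)+w(Y)$, $w(P)=w(X)w(Y)$, $w(N)=-w(X)$, and similarly for the $R$-algebra structure map $r\mapsto w^{-1}(r,\phi(r),\phi^2(r),\dots)$. We must also check that $(r,\phi(r),\dots)$ lies in the image of $w$. Here we need care: the ghost components live in $B^{\phi^i}$, so the $i$-th ghost component of the image of $r$ is $\phi^i(r)\cdot 1$. Solving recursively, over $A[1/\pi]$ the components are uniquely determined, e.g.
$$\pi^i S_i = w_i(X)+w_i(Y) - \sum_{j<i}\pi^j S_j^{q^{i-j}}.$$
The content is that $S_i$ actually lies in $A$, not merely in $A[1/\pi]$.

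The key integrality step is Dwork's lemma in the $\pi$-typical form, using the Frobenius lift $\Phi$ on $A$ given by $\phi$ on coefficients and $X_j\mapsto X_j^q$, $Y_j\mapsto Y_j^q$. It says that a sequence $(w_0,\dots,w_n)\in A^{n+1}$ is in the image of $w$ if and only if $w_i\equiv \Phi(w_{i-1}) \bmod \pi^i$ for all $i\ge 1$. The proof is induction using the fact that $a\equiv b \bmod \pi^j$ implies $a^{q}\equiv b^{q}\bmod \pi^{j+1}$. This in turn uses $q\in \pi\Ou_{\mfrak p}$ and the binomial expansion; for $q$ a power of $p$ one iterates the $p$-th power step. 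The congruences for sums, products and negatives of ghost vectors are preserved, because $\Phi$ is a ring endomorphism. The ghost vector of $r$ satisfies them because $\phi^i(r)=\Phi(\phi^{i-1}(r))$ exactly.

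With integrality established, define the operations on $W_n(B)$ by evaluating these universal polynomials. The ring axioms, such as associativity and distributivity, hold as polynomial identities because they hold after applying the injective ghost map on the torsion-free universal ring. Naturality is automatic. The main obstacle I expect is the Dwork-type congruence $a\equiv b \bmod \pi^j \Rightarrow a^q\equiv b^q \bmod \pi^{j+1}$ for general $\mfrak p$ with ramification. Its proof needs the facts that $p\in\mfrak p$ and that each binomial coefficient in the $p$-th power expansion is divisible by $p$. A clean route is to reduce to $a=b+\pi^j c$ and check the claim term by term.
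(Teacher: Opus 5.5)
Your argument is correct and is the standard one. The paper gives no proof of its own: it only points to the classical $p$-typical construction in Mumford's lectures (and to Borger for $\pi$-typical Witt vectors), and your uniqueness argument via injectivity of the ghost map on $\pi$-torsion-free algebras, together with Dwork-type integrality of the universal polynomials, is exactly that argument with $p$ replaced by $\pi$ and $p$-th powers by $q$-th powers.

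The one hypothesis you use without stating it is $\phi(\pi)=\pi$, i.e.\ $\phi$ is $\mathcal{O}$-linear; this is automatic in the paper's setting, where $\phi$ is the identity. It is what makes $\Phi(w_{i-1}(x))=w_{i-1}(\Phi(x_0),\dots,\Phi(x_{i-1}))$ hold in the ``only if'' half of your Dwork lemma.
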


We now recall some of the important operators on the $\pi$-typical 
Witt vectors: 

$(1)$ The {\it restriction} or {\it truncation} map given by
$T(x_0,\dots,x_n) = (x_0,\dots, x_{n-1})$ and satisfies $w \circ T
= T_w \circ w$. 

$(2)$ The {\it Frobenius} map $F: W_n(B) \map W_{n-1}(B)$ satisfying
$w \circ F = F_w \circ w$ given by 
$$
F(x_0,\dots ,x_n) = (x_0^{q} + \pi x_1, \dots ).
$$
As in the case of the ghost side map, $F$ lies over the Frobenius
endomorphism $\phi$ of $R$.

Consider the Witt vectors of infinite length 
$W(B):= \varinjlim_T W_n(B)$ be the inverse limit taken over the
truncation map $T$. 

The map $F$ induces the lift of Frobenius map on 
$W(B)$. In fact there exists a unique delta map $\Delta: W(B) \map W(B)$
such that $F$ is the associated Frobenius map, that is 
$F(x) = x^{q} + \pi \Delta (x)$.

The ring $W(B)$ satisfies the following universal property: For any $R$-algebra
$C$ with a $\pi$-derivation $\delta$ on it and an $R$-algebra map $f:C \map B$,
there exists a unique $R$-algebra homomorphism $g: C \map W(B)$ such that 
the diagram 
$$
\xymatrix{
W(B) \ar[d]_T & \\
B & C \ar[l]^f \ar[lu]_g
}
$$
commutes and $g \circ \Delta = g \circ \delta$. Hence $W$ is the right adjoint
of the forgetful functor from $R$-algebras with $\pi$-derivation to $R$-algebra.
Hence via the above universal property of Witt vectors, for any $R$-algebra
$B$, $W(B)$ becomes naturally an $R$-algebra.

For more details, we refer the reader to Section $1$ of \cite{bor11a}. 
This
approach is analogous to that of Joyal in the case of $p$-typical Witt vectors
\cite{joyal}.

\subsection{Prolongation sequences} 
Let $X$ and $Y$ be $\pi$-formal schemes over $S$. We say a pair $(u,\d)$ is a 
{\it prolongation} and we write $Y\stk{(u,\d)}{\longrightarrow} X$, 
if $u:Y \map X$ is a 
map of $\pi$-formal $S$-schemes and $\d: \Ou_X \map u_*\Ou_Y$ is a 
$\pi$-derivation making the following diagram commute:
$$\xymatrix{
R \ar[r] & u_*\Ou_Y \\
R \ar[u]^\d \ar[r] &\Ou_X \ar[u]_\d
}
$$

We now recall the notion of prolongation and arithmetic jet spaces over 
$\pi$-formal schemes as defined by Buium.
%For a more detailed treatment  we refer to \cite{BS_b,bui95}.
As in page 103 in \cite{bui00}, a {\it prolongation 
sequence} is a sequence 
$$
S \stk{(u_0,\d_0)}{\loongleftarrow} T^0 \stk{(u_1,\d_1)}{\loongleftarrow} 
T^1 \stk{(u_2,\d_2)}{\loongleftarrow} \cdots,
$$
where $T^i \stk{(u_{i+1},\d_{i+1})}{\loongleftarrow} T^{i+1}$ are 
prolongations satisfying
$$
u^*_i \circ \d_{i+1} = \d_i \circ u^*_{i+1}
$$
where $u^*_i$ is the pull-back morphism of sheaves induced by $u_i$
for each $i$. We will denote a prolongation sequence as 
$T^*$ or $\{T^n\}_{n\geq 0}$.
Prolongation sequences naturally form a category, where a morphism $f:T^*\to U^*$ is 
a family of morphisms $f^n:T^n\to U^n$ commuting with both the $u$ and $\d$, in the evident sense.

Define $S^*=\{S^i\}_{i=0}^\infty$ to be the prolongation sequence given by 
$S^i := \Spf R$ $u_i := \mathbbm{1}$ and $\d_i = \d$ is the fixed 
$\pi$-derivation on $R$ for all $i$. Let $\mcal{C}_{S^*}$ denote the category
of prolongation sequences over $S^*$.

\cblue 
%Let $\mcal{C}_{S^*}$ denote thecategory of prolongation sequences defined over $S^*$.
For any $\pi$-formal $S$-scheme $X$ and for all $n \geq 0$ we define the 
$n$-th jet space functor $J^nX$ as functor of points to be  
$$
J^nX(B):= X(W_n(B)) = \Hom_S(\Spf (W_n(B)), X)
$$
for any $\pi$-adically complete $R$-algebra $B$. Then $J^nX$ is representable by 
a scheme over $S$ (This was shown in \cite{bor11b} for  schemes over
$S= \Spec \Z$ and in \cite{bps} for a general 
prolongation sequence $S^*$).
The system of $\pi$-formal schemes $J^*X:=\{J^nX\}_{n\geq 0}$ forms a 
prolongation sequence 
and is called the {\it canonical prolongation sequence} as 
in \cite{bui00} where $\phi: J^{n+1}X \map J^nX$ denote the lift of Frobenius
morphism for each $n$. 

By Proposition 1.1 in \cite{bui00}, 
$J^*X$ satisfies the universal property that for any $T^* \in
\mcal{C}_{S^*}$ and $X$ a $\pi$-formal scheme over $S$ we have 
\begin{align}
	\label{univ}
\Hom_S(T^0,X) =\Hom_{\mathcal{C}_{S^*}}(T^*,J^*X).
\end{align}

\subsection{Local Witt coordinates}
\label{localcoordinates}
Consider the $\pi$-formal affine line $H = \hat{\bb{A}}^1 \simeq \Spf R[x]\h$.
For all $\pi$-adically complete $R$-algebra $B$, the adjunction property between
jet space and the Witt functor \cite{bps} \cite{bor11b} we have 
$$
J^nH (B) = \Hom_R(R[x]\h,W_n(B)) \simeq \Hom_R(R[x,x_1,\dots ,x_n]\h, B)
$$
where the above isomorphism is described as follows: for any $f \in 
\Hom_R(R[x]\h,
W_n(B))$ given by $f(x) = (b_0,\dots, b_n)$ where $b_i \in B$ for all $i$, the
corresponding map $\tilde{f} \in \Hom_R(R[x,x_1,\dots, x_n]\h,B)$ is given by
$\tilde{f}(x_i) = b_i$ for all $i$. The coordinate functions $x,x_1,\dots ,x_n$
on $J^nH \simeq \Spf R[x,x_1,\dots,x_n]\h$ are called Witt coordinates. 

%\cred
%\begin{definition}[\'{E}tale coordinate]\label{etale-def}
%Let $G$ be a $\pi$-formal group scheme of relative dimension $g$ over $S$ 
%and $e:S \map G$ be its identity
%section. Then there exists a $\pi$-formal open affine subscheme $U \subset G$ 
%containing the identity section $e$ such that there exists an \'{e}tale morphism
%$U \map H$ where $H= \hat{\bb{A}}^g \simeq \Spf R[\bx]\h,~\bx = \{x_1,\dots x_g
%\}$. This $\bx$ is referred to as a system of \'{e}tale coordinates around the
%identity section of $G$. 
%\end{definition}
\cblue

Suppose $U \map H$ is an \'{e}tale morphism where $H = \hat{\bb{A}}^g = 
\Spf R[\bx]\h$ where $\bx =\{x_1,\dots ,x_g\}$ is a system of coordinate 
functions. By Proposition $1.4$ in \cite{bui95} 
we have the following isomorphism of $\pi$-formal schemes
$$
J^nU \simeq U \times_H J^n H,
$$
for all $n$. 
By the above discussion we have $J^nH \simeq \Spf R[\bx,\bx_1,\dots,\bx_n]\h$
where $\bx_i = \{x_{1i},\dots ,x_{gi}\}$ is a system of coordinates for all $i$.
Therefore we have 
$$
J^nU \simeq U \times_H J^n H \simeq U \times_H \Spf R[\bx,\dots ,\bx_n]\h.
$$
Hence $\{\bx_i\}_{i=0}^n$ form an \'{e}tale coordinate system on $J^nG$ around
the identity section of $J^nG$. Now $N^nG$ is given by the following fiber
product
$$\xymatrix{
N^nG=J^nU\times_US\ar[r]^-{}\ar[d] & J^nU \ar[d]\\
 S \ar[r]^-{e} & U ,
}$$
where the product is taken over the identity section $S\xrightarrow{e} U$.
By composition, we have a section $S\xrightarrow{e}U\rightarrow H$ of $H$, 
which is given by the map of $R$-algebras $R[\bx]\h\rightarrow R$ as 
$\bx \mapsto 0$.
Hence we have $$N^nG =J^nU\times_U S=(J^nH\times_S U)\times_U S=J^nH\times_H S\cong \Spf(R[\bx_1,...,\bx_n]\h).$$
The functions $\{\bx_1,\dots ,\bx_n\}$ form a coordinate system for the 
affine $\pi$-formal group scheme $N^nG$.

For all $n \geq 0$, we have the following extension of smooth commutative
group schemes 
\begin{align}
\label{canexact}
0 \map N^nG \stk{\iota}{\map} J^nG \stk{u}{\map} G \map 0
\end{align}
where $N^nG = \ker( u: J^nG \map G)$ for all $n$. 
Note that the morphism $u$ induces a morphism of $\pi$-formal 
$S$-schemes (still denoted
by $u$) $u: N^n G \map N^{n-1}G$ for all $n \geq 2$. 

Consider the system of $\pi$-formal group schemes $N^*G =\{N^nG\}_{n=1}^\infty$.
By \cite{PS-2}, $N^*G$ is a prolongation sequence with for all $n \geq 2$,
$\fra: N^nG \map N^{n-1}G$ as the associated lift of Frobenius morphism with 
respect to $u$ which is also called the {\it lateral Frobenius}. By 
Theorem $4.3$ of \cite{BS_b} or Theorem $5.3$ of \cite{PS-2}, the morphism
$\fra$ satisfies 
\begin{align}
\label{fphi}
\phi^{\circ 2} \circ \iota = \phi \circ \iota \circ \fra.
\end{align}

We recall Theorem $1.3$ (in the special case of $m =0$) from \cite{PS-2}:

\begin{theorem}
\label{NandJ}
Let $G$ be a $\pi$-formal group scheme over $S$. Then
$$
N^*G \simeq J^*(N^1G),
$$
as a canonical isomorphism of prolongation sequences of $\pi$-formal schemes 
over $S^*$. In particular, for all $n \geq 1$ we have
$$
N^nG \simeq J^{n-1}G.
$$
\end{theorem}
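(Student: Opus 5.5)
The plan is to build the comparison map from the universal property \eqref{univ} of the canonical prolongation sequence, and then to check that it is an isomorphism in the local coordinates of Section \ref{localcoordinates}. The second statement is read as $N^nG\simeq J^{n-1}(N^1G)$, which is what the first statement gives.

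\textbf{Step 1: the comparison morphism.} Consider the shifted system $T^m:=N^{m+1}G$ for $m\geq 0$. Its transition maps are $u:N^{m+1}G\map N^mG$, and its Frobenius lifts are the lateral Frobenii $\fra$. I take as given, from the discussion preceding the statement, that this is a prolongation sequence over $S^*$ (this is the result quoted from \cite{PS-2}). The identity of $T^0=N^1G$ then corresponds under \eqref{univ} to a unique morphism of prolongation sequences $\Psi:N^{*+1}G\map J^*(N^1G)$ which is the identity in degree $0$.

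Next I show $\Psi$ is a homomorphism of group objects. Both $N^{*+1}G$ and $J^*(N^1G)$ are group objects in $\mcal{C}_{S^*}$: $J^*$ commutes with fibre products, and $\fra$ is a group homomorphism. The two maps $\Psi\circ m$ and $m\circ(\Psi\times\Psi)$ agree in degree $0$. Hence they agree everywhere, by uniqueness in \eqref{univ} applied to $N^{*+1}G\times N^{*+1}G$.

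\textbf{Step 2: coordinates and the lateral $\pi$-derivation.} Choose étale coordinates $\bx$ around the identity, so that $N^nG\simeq\Spf R[\bx_1,\dots,\bx_n]\h$. Then $N^1G=\Spf R[\bx_1]\h$ and $J^{n-1}(N^1G)\simeq \Spf R[\by_0,\dots,\by_{n-1}]\h$ in Witt coordinates with $\by_0=\bx_1$. On $J^nG$ the Frobenius has the form $\phi^*(\bx_i)=\bx_i^q+\pi\bx_{i+1}+(\text{terms in }\bx_0,\dots,\bx_i)$, coming from $F(x_0,\dots)=(x_0^q+\pi x_1,\dots)$.

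I would pull back the coordinate functions through both sides of \eqref{fphi}, namely $\phi^{\circ2}\circ\iota=\phi\circ\iota\circ\fra$, and compare leading terms by induction on $i$. The goal is to show that the lateral $\pi$-derivation $\d_{\fra}$ defined by $\fra^*(f)=u^*(f)^q+\pi\d_{\fra}f$ satisfies
$$\d_{\fra}(\bx_i)=\bx_{i+1}+P_i(\bx_1,\dots,\bx_i)$$
with $P_i$ restricted power series. The main obstacle is this step. One has to track carefully that no stray powers of $\pi$, or units other than $1$, appear in front of $\bx_{i+1}$ after cancelling the common $\phi$. If the $\pi$-power bookkeeping becomes unwieldy, I would instead verify the formula on the ghost side for $B$ $\pi$-torsion free, and conclude by flatness of the coordinate rings.

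\textbf{Step 3: triangularity.} Since $\Psi$ is a morphism of prolongation sequences extending the identity, $\Psi^*(\by_j)$ is determined by iterating $\d_{\fra}$ on $\bx_1$ (the Witt coordinates being iterated $\pi$-derivations up to lower-order corrections). By Step 2 and induction this gives
$$\Psi^*(\by_j)=\bx_{j+1}+Q_j(\bx_1,\dots,\bx_j).$$
A triangular substitution of this form is an automorphism of $\pi$-adically complete polynomial algebras, with the inverse built recursively. So $\Psi$ is an isomorphism on the coordinate neighbourhood, which is all of $N^nG$ since $N^nG$ is affine with these coordinates. Together with Step 1, this makes $\Psi$ an isomorphism of prolongation sequences of $\pi$-formal group schemes.

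Canonicity follows because $\Psi$ was defined by a universal property independent of the coordinates. Taking degree $n-1$ gives $N^nG\simeq J^{n-1}(N^1G)$.
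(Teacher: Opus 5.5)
Your proposal is correct. The paper does not actually prove this statement. It quotes it from \cite{PS-2} (Theorem 1.3 with $m=0$), and it likewise imports from \cite{BS_b} and \cite{PS-2} the fact that $N^*G$ is a prolongation sequence with lateral Frobenius $\fra$ satisfying \eqref{fphi}. Your argument assumes exactly that imported input and then gives a self-contained verification of what the paper treats as a black box: you construct $\Psi$ from the universal property \eqref{univ} and get bijectivity from a triangularity check in \'{e}tale Witt coordinates. You also read the last line correctly as $N^nG\simeq J^{n-1}(N^1G)$. Taken literally, $N^nG\simeq J^{n-1}G$ already fails for $G=\hGm$ and $n=1$, since $N^1\hGm$ is a formal affine line and $\hGm$ is not.

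The step you single out as the main obstacle is easier than you expect, and your ghost-side fallback settles it in a line. In the chart, write $\mathbf{y}=(\bx_1,\dots,\bx_n)$ as a Witt vector. Then $\iota$ is $\mathbf{y}\mapsto(0,\mathbf{y})=V\mathbf{y}$, so $\phi\circ\iota(\mathbf{y})=FV\mathbf{y}=\pi\mathbf{y}$ and $\phi^{\circ 2}\circ\iota(\mathbf{y})=\pi F(\mathbf{y})$. Multiplication by $\pi$ is injective on Witt vectors over the $\pi$-torsion-free ring $R[\bx_1,\dots,\bx_n]\h$. So \eqref{fphi} forces $\fra$ to be exactly the Witt vector Frobenius in the shifted coordinates. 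Consequently $\d_{\fra}\bx_1=\bx_2$, your $P_i$ are just the correction terms of the Witt Frobenius, and in fact $\Psi^*(\by_j)=\bx_{j+1}$ on the nose, so all your $Q_j$ vanish.

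One point should be made explicit in either version: the computation is legitimate inside the \'{e}tale chart. The images of $\phi\circ\iota$ and $\phi^{\circ 2}\circ\iota$ reduce to $\bar e$ modulo $\pi$, so they land in $J^mU$. There the coordinates are pulled back from $J^m\hat{\mathbb{A}}^g$ compatibly with $\phi$. With that in place, the leading-term induction you sketch also works. The top variable enters $(\phi\circ\iota)^*\bx_i$ linearly with coefficient exactly $\pi$, namely $(\phi\circ\iota)^*\bx_i=\pi\bx_{i+1}+L_i(\bx_1,\dots,\bx_i)$, and dividing by $\pi$ is harmless by torsion-freeness.
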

Hence combining the above theorem and (\ref{fphi}) we obtain the following:

\begin{theorem}
\label{NandJphi}
For all $n \geq 2$ we have the following commutative diagram of $\pi$-formal 
group schemes
$$
\xymatrix{
J^{n-1}(N^1G) \simeq N^nG \ar[r]^-{\phi \circ \iota} \ar[d]_\fra & J^{n-1}G 
\ar[d]^\phi \\
J^{n-2}(N^1G) \simeq N^{n-1}G \ar[r]^-{\phi \circ \iota} & J^{n-2}G.
}
$$
\end{theorem}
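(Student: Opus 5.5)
The commutative square will follow by combining Theorem \ref{NandJ} with the relation \eqref{fphi}. The only real content is to check that the isomorphism $N^nG \simeq J^{n-1}(N^1G)$ of Theorem \ref{NandJ} carries the lateral Frobenius $\fra$ on $N^*G$ to the canonical Frobenius $\phi$ of the prolongation sequence $J^*(N^1G)$. Once that is in place, the left vertical arrow may be read either as $\fra$ on $N^nG$ or as $\phi$ on $J^{n-1}(N^1G)$, and the square reduces to a statement purely about $N^*G$.

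Step one: Theorem \ref{NandJ} is stated as an isomorphism of prolongation sequences over $S^*$. A morphism in $\mathcal{C}_{S^*}$ commutes with both the $u$'s and the $\d$'s. Since $N^*G$ is a prolongation sequence with $\pi$-derivation giving $\fra$ as its associated lift of Frobenius, this compatibility with the $\pi$-derivations yields compatibility with the Frobenius lifts $\fra$ and $\phi$. Concretely, this holds because $\fra = u^q + \pi\d$ and $\phi = u^q+\pi\d$ on the respective sides, and both $u$ and $\d$ are intertwined by the isomorphism. So the identification $N^nG\simeq J^{n-1}(N^1G)$ intertwines $\fra: N^nG\to N^{n-1}G$ with $\phi: J^{n-1}(N^1G)\to J^{n-2}(N^1G)$, for $n\ge 2$. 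It is also an isomorphism of group schemes; I would check this by noting that it arises via the universal property \eqref{univ} from the identity of $N^1G$, and that this is compatible with products.

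Step two: establish commutativity of the square, i.e.\ $\phi\circ(\phi\circ\iota) = (\phi\circ\iota)\circ\fra$ as maps $N^nG\to J^{n-2}G$. This is exactly \eqref{fphi}, namely $\phi^{\circ 2}\circ\iota = \phi\circ\iota\circ\fra$, read at level $n$. Here $\iota: N^nG\to J^nG$, the first $\phi$ goes $J^nG\to J^{n-1}G$, and the second $\phi$ goes $J^{n-1}G\to J^{n-2}G$; on the other side $\fra:N^nG\to N^{n-1}G$ is followed by $\iota:N^{n-1}G\to J^{n-1}G$ and $\phi: J^{n-1}G\to J^{n-2}G$. All maps are homomorphisms of $\pi$-formal group schemes, since $\phi$ and $\iota$ are induced by ring maps of Witt vectors and by inclusion of kernels, respectively, and $\fra$ is a homomorphism by Step one. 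Hence the diagram lives in the category of group schemes.

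The main obstacle is Step one: ensuring that the identification from \cite{PS-2} is genuinely one of prolongation sequences with the lateral Frobenius $\fra$, rather than with the restriction of $\phi$. I would handle this by invoking the universal property \eqref{univ}. A morphism of prolongation sequences $N^*G\to J^*(N^1G)$ is determined by its degree-one component, namely the identity on $N^1G$. Uniqueness then forces the induced maps to respect the Frobenius lifts, and the inverse is constructed in \cite{PS-2}.
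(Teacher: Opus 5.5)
Your proposal is correct and is essentially the paper's argument: the paper obtains the square by combining Theorem \ref{NandJ}, an isomorphism of prolongation sequences that therefore intertwines $\fra$ on $N^*G$ with $\phi$ on $J^*(N^1G)$, with \eqref{fphi} read at level $n$. One small ordering issue: your universal-property argument that the identification respects the group laws already requires the multiplication on $N^*G$ to commute with $\fra$, so take the fact that $\fra$ is a homomorphism from its construction in \cite{BS_b} rather than deducing it afterwards from Step one.
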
 

We will now discuss some examples of the jet space functor in the case of 
additive and the multiplicative $\pi$-formal schemes.

\begin{enumerate}
\item  {\bf The additive group scheme $\hG$.} Consider $\hG = 
\Spf R[x]\h$, the additive $\pi$-formal 
group scheme over $S$. By the functor of points definition of arithmetic jet 
spaces, we have 
\begin{align}
\label{JnGa}
J^n(\hG) \simeq \bb{W}_n,
\end{align}
where $\bb{W}_n$ is the $\pi$-formal
affine $(n+1)$-plane $\hat{\bb{A}}^{n+1}$ endowed with the additive group law of the 
$\pi$-typical Witt vectors.

The projection map $u: J^n(\hG) \map J^{n-1}(\hG)$ is given by the restriction 
map $T$ and $\phi: J^n(\hG) \map J^{n-1}(\hG)$ is given by
the Frobenius $F$ map of $\pi$-typical Witt vectors.

\item {\bf The multiplicative group scheme $\hGm$.} Consider $\hGm = 
\Spf R[x,x^{-1}]\h$, the multiplicative
$\pi$-formal group scheme over $S$. Again, by the functor of points definition
of arithmetic jet spaces, we have 
$$J^n(\hGm) \simeq \bb{W}_n^\times,$$ 
where $\bb{W}_n^\times$ is the
subfunctor consisting of the multiplicatively invertible elements of $\bb{W}_n$.
We now give an explicit description of $J^n(\hGm)$.

The morphism $\vp: \hGm \map \hGm$ given by $x \map x^q$ is a lift of Frobenius
map compatible with the group structure of $\hGm$. Hence by the universal 
property of the canonical prolongation sequence (\ref{univ}), $\vp$ induces
a canonical splitting of $\pi$-formal group schemes 
$s: \hGm \map J^n(\hGm)$ of the following 
\begin{align}
\xymatrix{
0 \ar[r] & N^n(\hGm) \ar[r]^\iota & J^n(\hGm) \ar[r]^-u & \hGm 
\ar@/_1pc/[l]_-s \ar[r] & 0.
}
\end{align}
Hence we have the following isomorphism
\begin{align}
u \times (\mathbbm{1} - s \circ u) : J^n(\hGm) \simeq \hGm \times N^n(\hGm)
\end{align}
as $\pi$-formal group schemes over $S$. The morphism 
\begin{align}
\label{N1Ga}
\frac{1}{\pi}\log_{{\bb{G}}^{\mathrm{for}}_{\mathrm{m}}}
 \circ (\phi \circ \iota): N^1(\hGm) \longrightarrow \hG
\end{align}
induces an isomorphism of $\pi$-formal schemes, where 
$\log_{{\bb{G}}^{\mathrm{for}}_{\mathrm{m}}}$ is the 
formal logarithm of the formal multiplicative  group law 
${\bb{G}}^{\mathrm{for}}_{\mathrm{m}}$ 
to the additive group law
${\bb{G}}^{\mathrm{for}}_{\mathrm{a}}$ given by 
$$
\log_{ {\bb{G}}^{\mathrm{for}}_{\mathrm{m}}}(T) = \sum_{n=1}^\infty 
(-1)^{n+1}\frac{T^{n}}{n}.
$$
Hence by Theorem \ref{NandJphi} and (\ref{JnGa}), we have,
$N^n(\hGm) \simeq \WW_{n-1}$ for all $n \geq 1$,
 as $\pi$-formal group schemes over $S$. Therefore for all $n\geq 0$, we 
have 
\begin{align}
\label{JnGm}
J^n(\hGm) \simeq \hGm \times \WW_{n-1}
\end{align}
as $\pi$-formal group schemes over $S$. Note that evaluating the above 
at $\FF_p$, recovers the well known isomorphism of groups 
$\ZZ_p^\times \simeq \FF_p^\times \times \ZZ_p$.
\end{enumerate}

\subsection{Delta characters of group schemes}
We recall some basic results on delta characters from Sections $7,8$ of 
\cite{BS_b} that led to the construction of the filtered object
 $\Hd(G)$ over $K$.

Let $T^*$ be a prolongation sequence. For any $s \geq 0$, define the {\it 
shifted prolongation sequence} to be $T^{*+s}= \{T^{s+n}\}_{n=0}^\infty$.
Then a morphism $\Theta: J^nG \map \hG$ is called a {\it delta function of 
$G$ of order $\leq n$}. By the universal property of jet spaces, such a 
$\Theta$ induces a morphism of prolongation sequences $\Theta: J^{*+n}G \map
\hG$.

Define a {\it delta character of order $ \leq n$}, $\Theta:G \map \hG$ 
to be a delta function of order $\leq n$ from $G$ to $\hG$,
which is also a group homomorphism of $\pi$-formal group schemes.
By the universal property of arithmetic jet schemes as in equation (\ref{univ}),
an order $\leq n$ delta character is equivalent to a homomorphism
$\Theta:J^nG \map \hG$ of $\pi$-formal group schemes over $S$. 
We denote the group of 
delta characters of order $n$ by $\bX_n(G)$:
$$
\bX_n(G)=\Hom_S(J^nG,\hG).
$$
A delta character $\Theta$ is said to have {\it exact order} $n$ if 
$\Theta \in \bX_n(G) \backslash u^*\bX_{n-1}(G)$ where $u^*$ is the
pullback of delta characters by $u: J^nG \map J^{n-1}G$.
We will denote the exact order of $\Theta$ by $\mathrm{ord}(\Theta)$.
For any $R$-module $M$, let us denote
$$
M_{\phi} = R\otimes_{\phi,R}M.
$$

Since $\hG$ is an $R$-module $\pi$-formal scheme over $S$, $\bX_n(G)$ naturally becomes an $R$-module. Furthermore, the inverse system $J^{n+1}G\xrightarrow{u} J^{n}G$ defines a directed system of $R$-modules via the pullback map:
$$\ldots \xrightarrow{u^{*}} {\bX}_{n}(G)\xrightarrow{u^{*}} {\bX}_{n+1}(G) \xrightarrow{u^{*}}\ldots$$ 
We define 
$${\bX}_{\infty}(G)=\varinjlim {\bX}_{n}(G).$$

Similarly, pulling back by the Frobenius map $\phi:J^{n+1}G\to J^nG$ induces a Frobenius operator
$\bX_n(G)\to \bX_{n+1}(G)$. However since $\phi:J^{n+1}G\to J^nG$ is not a morphism over $\Spf R$ but
instead lies over the Frobenius endomorphism $\phi$ of $\Spf R$, some care is required.
Consider the relative Frobenius morphism $\phi_{G/R}$, defined to be the unique
morphism making the following diagram commute:
$$\xymatrix{
J^{n+1}G \ar@{.>}^{\phi_{G/R}}[rd] \ar@/^/[rrd]^\phi \ar@/_/[ddr]& & \\
& J^nG \times_{(\Spf R),\phi} \Spf R \ar[d] \ar[r] & J^nG \ar[d] \\
& \Spf R \ar[r]_\phi & \Spf R 
}$$
Then $\phi_{G/R}$ is a morphism of $A$-module formal schemes over $\Spf R$.

Then given a delta character $\Theta:J^nG\to \hG$, define $\phi^*\Theta$ to be 
the composition
\begin{equation}
	J^{n+1}G \longlabelmap{\phi_{G/R}} J^nG \times_{(\Spf R),\phi} \Spf R 
	\longlabelmap{\Theta\times\mathbbm{1}} \hG\times_{(\Spf R),\phi} \Spf R\longlabelmap{\iota} \hG,
\end{equation}
where $\iota$ is the isomorphism of $A$-module formal schemes over $R$
coming from the fact that $\hG$ descends to $\hat{A}$ as an $A$-module scheme. For any $R$-algebra $B$,
the induced morphism on $B$-points is
$$
G(W_{n+1}(B)) \longlabelmap{G(F)} G(W_n(B)^\phi) \longlabelmap{\Theta_B^\phi} 
B^\phi \longlabelmap{b\mapsto b} B.
$$
Note that the composition $G(W_{n+1}(B)) \map B$ is indeed a morphism of 
$A$-modules because the identity map $B^\phi \map B$ is $A$-linear since 
$\phi$ restricted to $\hA$ is the identity.

Hence we have an additive map $\phi^*: \bX_n(G) \to \bX_{n+1}(G)$ given by 
$\Theta\mapsto \phi^*\Theta$. Note that this map is not $R$-linear. 
 However, the map
$$\bX_n(G) \longmap \bX_{n+1}(G)_\phi, \quad \Theta\mapsto 
\phi^*\Theta $$ is $R$-linear. 
Taking direct limits in $n$, we obtain an $R$-linear map
$$ \bX_\infty(G) \longmap \bX_\infty(G)_\phi, \quad \Theta\mapsto
\phi^*\Theta.  $$
Hence $\bX_\infty(G)$ is a left module over the twisted polynomial
ring $R\{\phi^*\}$ with commutation law $\phi^* r = \phi(r)\phi^*$.

Applying $\Hom(-,\hG)$ to (\ref{canexact}) gives us the following exact 
sequence of $R$-modules
\begin{align}
0 \map \bX_0(G) \longrightarrow \bX_n(G) \stk{\iota^*}{\longrightarrow} 
\Hom(N^nG,\hG) \stk{\partial}{\longrightarrow} \Ext(G,\hG)
\end{align}
where $\Ext(G,\hG)$ is the group parametrizing isomorphism classes of 
$\pi$-formal group schemes which are extensions of $G$ by $\hG$ and 
$\partial$ is the connecting map.

Let $\bI_n(G):= \mathrm{image} (\partial)$. The maps $u: J^nG \map J^{n-1}G$ 
naturally induce maps on the kernels $u: N^{n+1}G \map N^{n}G$. Hence via
pull-back $u^*: \Hom(N^nG,\hG) \inj \Hom(N^{n+1}G,\hG)$ is an injection of 
$R$-modules. Therefore we have $\bI_n(G) \subset \bI_{n+1}(G)$. We now define
\begin{align}
\label{bI}
\bI(G) := \varinjlim \bI_n(G).
\end{align}

We define the $R$-module
$$
\bH_{n}(G) = \frac{\Hom(N^{n}G,\hG)}{i^*\phi^*(\bX_{n-1}(G)_{\phi})}.
$$
Note that 
$u:N^{n+1}G\map N^nG$ induces $u^*:\Hom(N^nG,\hG) \map \Hom(N^{n+1}G,\hG)$. 
Moreover, since $u$ commutes with both $i$ and $\phi$, we have
$$
u^*i^*\phi^*(\bX_n(G)) = i^*\phi^*u^*(\bX_n(G)) \subset i^*\phi^*(\bX_{n+1}(G)),
$$
and hence $u$ also induces a map $u^*:\bH_n(G) \map \bH_{n+1}(G)$.
Define 
\begin{equation}
	\label{bigH}
	\Hd(G)= \varinjlim \bH_n(G)
\end{equation}
where the limit is taken in the category
of $R$-modules. Similarly, $\mfrak{f}: N^{n+1}G \map N^nG$ induces 
$\mfrak{f}^*:\Hom(N^nG,\hG) \map \Hom(N^{n+1}G,\hG)$, 
which descends to a $\phi$-semilinear morphism of $R$-modules
\begin{equation}	
	\label{latfrobH}
	\mfrak{f}^*:\bH_n(G) \map\bH_{n+1}(G)
\end{equation}
because (by Proposition $6.3(2)$ in \cite{BS_b}) we have $\mfrak{f}^*i^*\phi^*(\bX_{n-1}(G))= 
i^*\phi^*\phi^*(\bX_{n-1}(G)) \subset i^*\phi^*\bX_{n}(G)$.
This in turn induces a $\phi$-semilinear endomorphism $\mfrak{f}^*:\Hd(G) \map \Hd(G)$. 

We define 
$$
\bXp(G):= \varinjlim \bX_n(G)/\phi^*\bX_{n-1}(A)_\phi.
$$
As in Section $8$ of \cite{BS_b},
we have the following short exact sequence of $R$-modules 
$$
0 \map \bXp(G) \map \Hd(G) \map \bI(G) \map 0,
$$
where $\bI(G) \subset \Ext(G,\hG)$ is as described in equation $(7.3)$ of \cite{BS_b}
(even though the construction in \cite{BS_b} was made for abelian schemes, the above holds true
for a general smooth commutative $\pi$-formal scheme $G$ of finite type over $S$ and this can
be shown without making any changes to the proofs of \cite{BS_b}).
By Theorem $8.7$ of \cite{PS-2}, $\bXp(G)$ is a free $R$-module of rank $g$.
As a consequence we obtain 
\begin{align}
\label{Xprimiso}
\bXp(G) \simeq R\langle \Theta_1,\dots , \Theta_g\rangle,
\end{align}
where $\mathbb{B} = \{\Theta_1,\dots , \Theta_g\}$ is a primitive basis (see page $418$ 
of \cite{BS_b} for definition) for 
$\bX_\infty(G)$ with  $o_1,\dots o_g$ being
 the respective exact orders of the delta 
characters $\Theta_i$ for all $i=1,\dots ,g$. Also by Theorem $1.6$ of \cite{PS-2},
there is a non-zero integer $\mup$ called the {\it upper splitting number} such that  
$\mup = \max\{o_1,\dots o_g\}$ and satisfies $\mup \leq g+1$.

\subsection{Delta characters of additive and multiplicative group schemes}

\begin{enumerate}
\item {\bf Delta characters of $\hG$.}
For all $n \geq 0$, the $R$-module of delta characters of order $\leq n$ is given
by
\begin{align}
\bX_n(\hG) \simeq \Hom (\WW_n,\hG) \simeq R\langle (\mathbbm{1}_{\hG} \circ T^{\circ n}), 
(\mathbbm{1}_{\hG} \circ T^{\circ (n-1)} \circ F), \ldots, 
(\mathbbm{1}_{\hG} \circ F^{\circ n}) \rangle.
\end{align}
where the map $\phi: J^n(\hG) \map J^{n-1}(\hG)$ is identified with the Frobenius
map $F: \WW_n \map \WW_{n-1}$ under the isomorphism $J^n(\hG) \simeq \WW_n$ as in 
(\ref{JnGa}).
Hence we have the following isomorphism of $R\{\phi^*\}$-modules
\begin{align}
\bX_\infty(\hG) \simeq R\{\phi^*\}\langle \mathbbm{1}_{\hG} \rangle.
\end{align}
Hence by (\ref{Xprimiso}), the upper splitting number of $\hG$ is $\mup = 0$.

\item {\bf Delta characters of $\hGm$.}
For all $n \geq 0$, the $R$-module of delta characters of order $\leq n$ is given
by
\begin{align}
\bX_n(\hGm) \simeq \Hom (\hGm \times \WW_{n-1}, \hG) \simeq \Hom(\WW_{n-1},\hG),
\end{align}
where the first isomorphism is induced from (\ref{JnGm}) and the second one is 
because of the fact that $\Hom(\hGm,\hG) =\{0\}$. Hence we have 
\begin{align}
\bX_\infty(\hGm) \simeq R\{\phi^*\}\langle \mathbbm{1}_{N^1(\hGm)} \rangle,
\end{align}
and we have $N^1(\hGm) \simeq \hG$ from (\ref{N1Ga}). Hence 
the delta character $\mathbbm{1}_{N^1(\hGm)}$ is of exact order $1$. Therefore by
(\ref{Xprimiso}), the upper splitting number $\mup =1$.
\end{enumerate}

\section{Frobenius relation between Jet spaces and their Kernels}\label{SFrob}
We will now recall some of the theory of arithmetic jet spaces for smooth 
commutative group schemes, most of which can be found in \cite{BS_b} and
\cite{PS-2}.
Let $G$ be a smooth finite type commutative $\pi$-formal group scheme of 
relative dimension $g$ over $S$ and suppose $r= \rk_R \Ext(G,\hG)$.

We recall the following results from \cite{BS_b}. We note that even though
the results in \cite{BS_b} are stated for abelian schemes, they are in fact
true for any smooth commutative $\pi$-formal group scheme of finite type
over $S$ and the arguments for the 
latter remain unchanged from the case for abelian schemes.

\begin{theorem}
\label{comfact}
Let $G$ be a smooth group scheme over $S$. Then
the morphism of group schemes 
$(\phi \circ \iota - \iota \circ \fra) : N^nG \map J^{n-1}G$ uniquely factors through  $N^1G$ as
$$\xymatrix{
N^nG \ar[rr]^-{\phi \circ \iota - \iota \circ \fra}  
\ar[d]_u & & J^{n-1}G \\
N^1G \ar[rru]_\Delta & &
}$$
\end{theorem}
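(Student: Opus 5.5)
The plan is to show that $\psi := \phi\circ\iota - \iota\circ\fra : N^nG \map J^{n-1}G$ is a group homomorphism that vanishes on $\ker(u: N^nG \map N^1G)$. Then, since $u: N^nG\map N^1G$ is faithfully flat, $\psi$ descends uniquely to a morphism $\Delta: N^1G \map J^{n-1}G$.

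The first step is to check that $\psi$ is a homomorphism of $\pi$-formal group schemes. This is clear: $\phi$, $\iota$ and $\fra$ are all group homomorphisms, and $J^{n-1}G$ is commutative, so their difference is again a homomorphism.

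Next we show surjectivity and flatness of $u$, and identify its kernel. By Theorem \ref{NandJ}, the map $u: N^nG \map N^1G$ is identified with the projection $J^{n-1}(N^1G)\map N^1G$. Since $N^1G$ is smooth, this projection is a surjection of smooth $\pi$-formal groups; in local Witt coordinates it is $\Spf R[\bx_1,\dots,\bx_n]\h \map \Spf R[\bx_1]\h$. It is therefore faithfully flat, and its kernel $K_n$ is the subscheme cut out by $\bx_1=0$.

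The main step is to show that $\psi$ vanishes on $K_n$. On points, write $x\in N^nG(B)\subset G(W_n(B))$ for a point of $G$ with Witt coordinates $(0,\bx_1,\dots,\bx_n)$. Its image under $u$ to $N^1G$ is given by $(0,\bx_1)$, so $x\in K_n(B)$ means $\bx_1=0$. We then compare $F(x)$ with $\fra(x)$ as elements of $G(W_{n-1}(B))$. Here $\phi$ is induced by the Witt Frobenius $F$, and under Theorem \ref{NandJ} $\fra$ is the Frobenius of the prolongation sequence $J^*(N^1G)$.

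For the additive case this is a direct computation. In ghost components, the Witt vector $(0,0,x_2,\dots)$ has ghost components $w_0=w_1=0$ and $w_i=\pi^2(\dots)$. Applying $F$ shifts these to $(0,\pi^2\ldots)$, and the relation \eqref{fphi}, $\phi^{\circ2}\iota=\phi\iota\fra$, pins down $\fra$ as the map making these agree once the first coordinate vanishes.

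For general $G$, I would argue via the étale chart $J^nU\simeq U\times_H J^nH$. Points of $K_n$ are the Witt vectors lying in the image of $V^2$-type elements, i.e. $x\in G(W_n(B))$ whose restriction to $W_1(B)$ is the identity. I expect this to be the main obstacle: showing that $F(x)=\fra(x)$ for such points. The approach is as follows. Both sides are determined after applying $\phi$, because by \eqref{fphi} we have $\phi\circ\psi = \phi^2\iota-\phi\iota\fra = 0$. So $\psi$ lands in $\ker(\phi:J^{n-1}G\map J^{n-2}G)$. It therefore suffices to show that $\psi$ restricted to $K_n$ also lands in $N^{n-1}G$ and is killed by $\phi\circ\iota$ there. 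The map $\phi\circ\iota$ on $N^{n-1}G\simeq J^{n-2}(N^1G)$ is injective on the part with vanishing first coordinate, by the explicit description of $F$ in Witt coordinates, $F(0,x_1,\dots)=(\pi x_1,\dots)$, which is injective on $\pi$-torsion-free points. I would first verify this injectivity on flat $B$, which suffices since $N^nG$ is flat, and conclude $\psi|_{K_n}=0$.

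Finally, the descent step. Since $u$ is faithfully flat and $\psi$ is a homomorphism trivial on $\ker u$, $\psi$ factors uniquely through $N^nG/K_n\simeq N^1G$. This gives $\Delta$ with $\psi=\Delta\circ u$. Uniqueness follows because $u$ is an epimorphism.
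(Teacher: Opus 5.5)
The paper does not prove this statement itself; its proof is just a pointer to Theorem 4.4 of Borger--Saha. Your proposal is a correct, self-contained alternative. It shows that the factorization is a formal consequence of \eqref{fphi} (their Theorem 4.3) together with one concrete fact: $\phi\circ\iota\colon N^{n-1}G\to J^{n-2}G$ is injective on $\pi$-torsion-free points. As a by-product, for $n\geq 2$ it characterizes $\Delta$ as the unique morphism $N^1G\to J^{n-1}G$ with $u\circ\Delta=\phi\circ\iota$ and $\phi\circ\Delta=0$, which the citation does not make visible. The citation keeps the paper short; your route explains why the theorem holds from ingredients already on the page.

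A few steps should be written out rather than asserted.
\begin{itemize}
\item[(i)] You need $\psi(K_n)\subset N^{n-1}G$. The composite $u\circ\phi\colon J^nG\to G$ is induced by $(x_0,\dots,x_n)\mapsto x_0^q+\pi x_1$, which factors through $W_1$. Hence $u\circ\phi=\phi\circ u_{n\to 1}$ kills $K_n$, while $\iota\circ\fra$ lands in $N^{n-1}G$ anyway.
\item[(ii)] Injectivity does not follow from the first Witt component $\pi x_1$ alone. Use the identity $F\circ V=\pi$ on $W_{n-2}(B)$, checked on ghost components, which are injective for $\pi$-torsion-free $B$. Then transport it to $G$ through $N^{n-1}G=N^{n-1}U\simeq N^{n-1}H$ (Proposition \ref{Nniso}) and functoriality of $\phi\circ\iota$ along $U\to H$. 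This transfer is legitimate because no group law is involved in this step.
\item[(iii)] The flatness you need is that of $K_n\simeq \Spf R[\bx_2,\dots,\bx_n]\h$, not of $N^nG$, so that you can test on its universal point.
\item[(iv)] The descent can be made elementary. In Witt coordinates, $u\colon N^nG\to N^1G$ has a scheme-theoretic section $s$ (put $\bx_2=\dots=\bx_n=0$). Then $\Delta:=\psi\circ s$ works, since $x-s(u(x))\in K_n$ and $\psi$ is a homomorphism. This avoids forming quotients of $\pi$-formal group schemes.
\end{itemize}
Your additive-case paragraph is not needed, and its last sentence proves nothing. In that case one simply has $\fra(Vy)=VFy$ and $\psi(Vy)=\pi y-VFy$, whose ghost vector is $(\pi y_0,0,\dots,0)$.
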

\begin{proof}
See Theorem $4.4$ in \cite{BS_b}.
\end{proof}

For any delta character $\Theta \in \bX_n(G)$, consider the derivative 
map $D\Theta : T_0(J^nG) \map T_0(\hG)$ between the tangent spaces at the 
identity sections. With respect to the local Witt coordinates chosen in Section
\ref{localcoordinates}, let the derivative map be given by
\begin{align}
\label{differential}
D\Theta = (A_0 \cdots A_n)
\end{align}
where $A_j \in \mb{Mat}_{1\times g} (R)$.

Let $\BB= \{\Theta_{1},\dots, \Theta_{g}\}$ be a primitive basis for 
$\bX_\infty(G)$  such that 
$$
\bXp(G) \simeq R\langle \Theta_1,\dots , \Theta_g\rangle,
$$
as in equation (\ref{Xprimiso}) with
$o_1,\dots, o_g$ be the respective exact orders of the delta characters 
$\Theta_i$ for all $i=1,\dots , g$. Hence for all $i$, the delta character
$\Theta_i$ is a group homomorphism
$$
J^{o_i}G \map \hG.
$$
For each $\Theta_i$, let the derivative
matrix at the identity, as in (\ref{differential}), be 
$D\Theta_i=(A_{0i} \cdots A_{o_ii})$ where $A_{ji}$ are $(1\times g)$-matrices.
Consider the $g \times g$-matrix
$$
\tilde{\Gamma} := \left(\begin{matrix}A_{01} \\ \vdots\\  A_{0g}\end{matrix}\right) \in 
\Mat_{g\times g}(R).
$$

\begin{proposition}
\label{diff}
Let $\Theta$ be a delta character in $\bX_n(G)$.
\begin{enumerate}
        \item We have
                $$
                \iota^*\phi^*\Theta = \mfrak{f}^*(\iota^*\Theta)+ 
\gamma_\Theta. \Psi_1,
                $$
                where $\gamma_\Theta=\pi A_0$. 
        \item For $n\geq 2$, we have
                $$
                \iota^*(\phi^{\circ n})^*\Theta= (\mfrak{f}^{n-1})^* \iota^*\phi^*\Theta.
                $$
\end{enumerate}
\end{proposition}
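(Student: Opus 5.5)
The plan is to derive both parts from the factorization in Theorem \ref{comfact} together with the Frobenius relation (\ref{fphi}), and to compute the single remaining constant by a first-order (tangent space) calculation in the local Witt coordinates of Section \ref{localcoordinates}. Throughout I read $\Psi_1$ as the vector $(x_{11},\dots,x_{g1})^t$ of first Witt coordinates on $N^nG$, i.e.\ the composition $N^nG\stk{u}{\map}N^1G\simeq \Spf R[\bx_1]\h$. This is my interpretation of a symbol not yet defined at this point in the paper.

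\textbf{Part (1).} By Theorem \ref{comfact} we have
$$\iota^*\phi^*\Theta-\fra^*\iota^*\Theta=\Theta\circ(\phi\circ\iota-\iota\circ\fra)=(\Theta\circ\Delta)\circ u .$$
So it suffices to identify the homomorphism $\Theta\circ\Delta:N^1G\map\hG$.

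First I will argue that any homomorphism $N^1G\map\hG$ is $R$-linear in the coordinates $\bx_1$. Here $N^1G\simeq\hG^{\,g}$ as formal groups, via (\ref{N1Ga})-type logarithm arguments or directly because the group law on $N^1G$ is additive in $\bx_1$. Additive power series over the flat $\pi$-adic ring $R$ are then linear. Hence $\Theta\circ\Delta=\gamma\cdot\bx_1$ for a unique $\gamma\in\Mat_{1\times g}(R)$. Moreover $\gamma$ equals the $\bx_1$-block of the derivative at the identity of
$$\Theta\circ(\phi\circ\iota-\iota\circ\fra)\quad\text{on } N^nG,$$
because $u$ is the projection onto the $\bx_1$-coordinates. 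By the chain rule this derivative is $(A_0\cdots A_n)\cdot\big(D(\phi\circ\iota)-D(\iota\circ\fra)\big)$.

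\textbf{The key computation.} I compute $D\phi$ on the ghost side. The Witt polynomial $w_i$ has linear part $\pi^i x_i$, and $F_w$ is the left shift. Hence the linear part of $\phi$ sends $x_{i+1}\mapsto \pi x_{i+1}$ placed in slot $i$; that is, $D\phi=\pi\cdot(\text{shift})$, which kills slot $0$ and is injective on slots $\geq1$.

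It follows that in $D(\phi\circ\iota)$ the coordinate $\bx_1$ contributes only $\pi$ in slot $0$. For $D(\iota\circ\fra)$, slot $0$ vanishes because the image lies in $N^{n-1}G$. For the slots $\geq1$, I differentiate the relation (\ref{fphi}), $\phi^{\circ2}\circ\iota=\phi\circ\iota\circ\fra$. The left side has derivative $\pi^2\,\mathrm{shift}^2$. Since $D\phi$ is injective on slots $\geq1$, this forces $D(\iota\circ\fra)$ to place $\pi x_{i+1}$ in slot $i\ge1$. In particular $\bx_1$ never appears in $D(\iota\circ\fra)$.

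Therefore the $\bx_1$-block of $D(\phi\iota-\iota\fra)$ is $\pi$ times the identity in slot $0$, and $\gamma_\Theta=\pi A_0$. This tangent-level bookkeeping, in particular justifying that $D(\iota\circ\fra)$ has no $\bx_1$-component, is the step I expect to need the most care. It may be cleaner to use the identification $N^nG\simeq J^{n-1}(N^1G)$ of Theorem \ref{NandJ}, under which $\fra$ is literally the Witt Frobenius.

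\textbf{Part (2).} I iterate (\ref{fphi}) by induction on $k$ to get $\phi^{\circ(k+1)}\circ\iota=\phi\circ\iota\circ\fra^{k}$. The inductive step is
$$\phi^{\circ(k+2)}\iota=\phi\circ(\phi\iota\fra^{k})=(\phi^{\circ2}\iota)\fra^{k}=\phi\iota\fra^{k+1}.$$
Taking $k=n-1$ and pulling $\Theta$ back along both sides gives $\iota^*(\phi^{\circ n})^*\Theta=(\fra^{n-1})^*\iota^*\phi^*\Theta$. One must check that $\phi^*$ on characters is compatible with composition, using the relative Frobenius $\phi_{G/R}$ as defined above. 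This is a formal check, since all maps involved lie over powers of $\phi$ on $S$ compatibly.
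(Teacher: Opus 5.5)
The paper gives no argument of its own for this statement; it simply cites Proposition 6.3 of \cite{BS_b}. Your Part (2) is correct. The tangent-space bookkeeping in Part (1) is also correct: differentiating (\ref{fphi}) and using that $D\phi$ is injective on the slots $\geq 1$ does show that $D(\iota\circ\fra)$ has no $\bx_1$-component. So the differential at the origin of $\Theta\circ\Delta$, in the coordinates $\bx_1$, is $\pi A_0$.

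The gap is the step that turns this first-order information into the identity $\Theta\circ\Delta=\gamma_\Theta\Psi_1$. You assert that the group law on $N^1G$ is additive in $\bx_1$, so that every homomorphism $N^1G\to\hG$ is linear in $\bx_1$ and $\Psi_1$ can be taken to be the coordinate vector $\bx_1$. This is false. Since $\phi^*\bx_0=\bx_0^q+\pi\bx_1$, the ghost map (on $\pi$-torsion-free points) shows that the group law of $N^1G$ in the coordinates $\bx_1$ is $\frac{1}{\pi}F^{\phi}(\pi\bx_1,\pi\by_1)$, where $F$ is the formal group law of $G$ in the coordinates $\bx$. For $\hGm$ this is $a+b+\pi ab$, which is exactly why (\ref{N1Ga}) needs $\frac{1}{\pi}\log$ rather than the identity. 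Consequently $\bx_1$ is not a character, and $\Theta\circ\Delta$ is in general not of the form $\gamma\cdot\bx_1$. $\Psi_1$ has to be a genuine character $N^1G\to\hG^g$, such as $\frac{1}{\pi}\log_G(\pi\bx_1)$ (cf.\ (\ref{Psi1})), which agrees with $\bx_1$ only to first order.

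What you need instead is a rigidity statement: a homomorphism $f:N^1G\to\hG$ with $Df=0$ at the origin is zero. Let $F_1$ be the group law of $N^1G$. Differentiating $f(F_1(\bx,\by))=f(\bx)+f(\by)$ in $\by$ at $\by=0$ gives $\nabla f(\bx)\cdot\partial_{\by}F_1(\bx,0)=\nabla f(0)=0$. The matrix $\partial_{\by}F_1(\bx,0)$ is invertible, so $\nabla f=0$. Since $R$ has characteristic $0$ and $R[\bx_1]\h\subset R[[\bx_1]]$, it follows that $f=0$.

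Apply this to $\Theta\circ\Delta-\pi A_0\Psi_1$, with $\Psi_1$ normalized so that $D\Psi_1=\mathbf{1}_g$ in the $\bx_1$-coordinates; your derivative computation then completes the proof. For a different normalization of $\Psi_1$ the same argument gives $\gamma_\Theta\cdot D\Psi_1=\pi A_0$ instead.
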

\begin{proof}
See Proposition $6.3$ in \cite{BS_b}
\end{proof}

Suppose $\bb{B} = \{\Theta_1,\dots ,\Theta_g\}$ be a primitve $R$-basis 
of $\bXp(G)$.
For all $i= 1, \dots g$, we have $\iota^*\phi^*\Theta_i = \fra^*\iota^* \Theta_i +
\gamma_i \Psi_1$ for some $\gamma_i :=\gamma_{\Theta_i} \in \Mat_{1\times g}(R)$. 
By Proposition 
\ref{diff} (1) we have the following equality of $g \times g$-matrices
\begin{align}
\label{Gammamatrix}
{\Gamma}:=\left(\begin{matrix} \gamma_1 \\ \vdots \\ \gamma_g  \end{matrix}\right) = \pi 
\tilde{\Gamma}.
\end{align}

By Theorem \ref{comfact}, pulling back via $\Delta$ we obtain the following
map of $R$-modules
\begin{align}
\xymatrix{
\bX_n(G) \ar[r]^-{\Delta^*} & \Hom(N^1G,\hG),
}
\end{align}
given by (as in Proposition \ref{diff} (1)),
\begin{align}
\label{Thegamma}
\Theta \mapsto \gamma_\Theta \Psi_1.
\end{align}

Consider the morphism of $\pi$-formal group schemes
\begin{align}
\xymatrix{
N^1G \ar[r]^{\phi \circ \iota} & G.
}
\end{align}
The above induces the corresponding injective linear map on the Lie algebras
\begin{align}
\label{Liepi}
\xymatrix{
\Lie N^1G \ar[r]^-{D(\phi \circ \iota)} & \Lie G, \mbox{ given by } \bt \mapsto
\pi \bt
}
\end{align}
which upon tensoring over $K$ becomes an isomorphism.

We define the map $\tilde{\Upsilon}_n: \bX_n(G) \map (\Lie G)^*$ via the 
following composition:
\begin{align}
\xymatrix{
\Hom (\Lie N^1G,\hG)_K & \Hom (\Lie G, \hG)_K \simeq (\Lie G)^*_K
\ar[l]_-{D(\phi \circ \iota)}^-\sim\\
\Hom(N^1G, \hG) \ainj{u} & \bX_n(G). \ar[l]_-{\Delta^*} 
\ar[u]_{\tilde{\Upsilon}_n}
}
\end{align}
By Proposition \ref{diff} (2), $\tilde{\Upsilon}_n$ restricted to 
$\phi^*(\bX_{n-1}(G))$ is $0$ and hence induces the map of $R$-modules
$$ 
\tilde{\Upsilon}_n: \bX_n(G)/ \phi^*\bX_{n-1}(G) \map (\Lie G)^*_K.
$$
Hence passing to the limit over $n$ and combining (\ref{Thegamma}) and 
(\ref{Liepi}) we obtain 
\begin{align}
\label{Upsilon-new}
\Upsilon : \bXp(G) &\longrightarrow (\Lie G)^*_K \\
[\Theta] & \longmapsto \frac{\gamma_\Theta}{\pi} \Psi_1, 
\nonumber
\end{align}
for any class $[\Theta] \in \bXp(G)$. By Proposition \ref{diff} (1), 
$\frac{\gamma_\Theta}{\pi} \in \Mat_{1 \times g}(R)$ and hence therefore
the image of $\Upsilon$ inside $(\Lie G)^*_K$ is integral, in other words, 
we have $\Upsilon: \bXp(G) \longrightarrow (\Lie G)^*$.

Hence with respect to the primitive
basis $\bb{B} = \{\Theta_1, \dots , \Theta_g\}$ of $\bXp(G)_K$, the map 
$\Upsilon_K$ is given by the matrix 
\begin{align}
\label{Upsilonmatrix}
[\Upsilon_K] = \frac{1}{\pi} \Gamma = \tilde{\Gamma}. 
\end{align}

\subsection{Canonical splitting of Lie algebras}
\label{Can-split-Lie}
Recall for any $R$-algebra $B$, the ghost map 
$$
w: W_n(B) \map \prod_\phi^n B,
$$
introduced in Section \ref{pityp}. Hence applying the functor $G(-)$ to the 
above and by the functorial definition of jet spaces, we obtain the 
following morphism of $\pi$-formal group schemes
\begin{align}
\label{ghmap}
\xymatrix{
J^nG \ar[d]_u \ar[r]^w & \prod_\phi^n G\ar[d]^{\mathrm{pr}_0} \\
G \ar@{=}[r] & G.
}
\end{align}
Suppose $\bx_0$ be an \'{e}tale coordinate system around the identity section 
of $G$ and $(\bx_0,\dots , \bx_n)$ the induced Witt coordinate system around 
the identity section of $J^nG$. Then the ghost map $w$ in terms of the above
coordinates is given by
\begin{align}
\label{wmap}
(\bx_0,\dots ,\bx_n) \mapsto \langle \bx_0, \bx_0^q+\pi \bx_1, \dots ,
\bx_0^{q^n} + \pi \bx_1^{q^{n-1}} + \cdots + \pi^n \bx_n\rangle.
\end{align}

Taking derivative at the identity section of diagram (\ref{ghmap}) gives us
the following commutative diagram of $R$-modules
$$
\xymatrix{
\Lie J^nG \ar[d]_u \ar[r]^-{Dw} & \Lie G \times \cdots (\Lie G)_{\phi^n} 
\ar[d]_{\pr_0} \\
\Lie G \ar[r]^= & \Lie G, \ar@/_1pc/[u]_{s_0}
}
$$
where $s_0(\bx_0) = (\bx_0, 0,\dots , 0)$ is the natural section of $\pr_0$. 
Hence from (\ref{wmap}), the derivative map $Dw$ is given by
\begin{align}
(\bx_0,\dots ,\bx_n) \mapsto \langle \bx_0, \pi \bx_1, \dots , \pi^n \bx_n
\rangle
\end{align}

Note that $Dw$ is an injective map of $R$-modules and also the image of the
section
$s_0$ is contained inside the image of $Dw$. Consider the map of Lie algebras
$v:=(Dw)^{-1} \circ s_0: \Lie G \map \Lie J^nG$ which is a section of the 
following short exact sequence of $R$-modules obtained from (\ref{canexact})
\begin{align}
\label{Liesplit}
\xymatrix{
0 \ar[r] & \Lie N^nG \ar[r]^{D\iota} & \Lie J^nG \ar[r]^{Du} 
\ar@/_1.5pc/[l]_\switt &  
\Lie G \ar@/_1.5pc/[l]_v \ar[r] & 0,
}
\end{align}
where $\switt = \mathbbm{1} - v \circ Du$.
In terms of the \'{e}tale coordinate system, $v$ is given by 
\begin{align}
\bx_0 \mapsto (\bx_0,0 ,\dots, 0).
\end{align}

\subsection{Extension of group schemes.} 
\label{Ext-of-gp-sch}
We now recall some of the theory of extensions of arithmetic jet spaces 
introduced in Section $8$ in \cite{BS_b}. 
Given a character $\Psi \in \Hom(N^nG, \hG)$, consider the push-out of 
the short exact sequence of $\pi$-formal group schemes in diagram 
(\ref{canexact}) via $\Psi$:
$$
\xymatrix{
0 \ar[r] & N^nG \ar[d]_\Psi \ar[r]^\iota & J^nG \ar[d]_{g_\Psi} 
\ar[r]^u & G \ar@{=}[d] \ar[r] & 0\\
0 \ar[r] & \hG \ar[r]^-\iota & \Psi_* (J^nG) \ar[r] & G \ar[r] & 0,
}
$$
where $\Psi_*(J^nG) = \frac{J^nG \times \hG}{\Gamma(N^nG)}$ and 
$\Gamma(N^nG) = \{(\iota(z), -\Psi(z)) | z \in N^nG\} \subset J^nG \times N^nG$
and the induced morphism $g_\Psi: J^nG \map \Psi_*(J^nG)$ is given by
$g_\Psi(x) = [x,0] \in \Psi_*(J^nG)$.

The splitting of the Lie algebras in (\ref{Liesplit}), naturally induces 
the splitting of Lie algebras of the push-out extension
$$
\xymatrix{
0 \ar[r] & \Lie \hG \ar[r]^-{D\iota} & \Lie \Psi_*(J^nG) 
\ar@/_1.25pc/[l]_{s_\Psi} \ar[r] & \Lie G \ar[r] & 0,
}
$$
where 
$$
s_\Psi: \Lie \Psi_*(J^nG) = \frac{\Lie J^nG \times \Lie \hG}{\Gamma(N^nG)}
\longrightarrow \Lie \hG
$$
is given by 
$$
s_\Psi([\bt,y]) = D\Psi(\switt(\bt)) + y.
$$

Suppose $A$ is an abelian scheme over $S$. Recall that $\Ext^\sharp(A, \hG)$ 
parametrizes isomorphism classes of extensions of $A$ by $\hG$,
along with a splitting of the corresponding short exact sequence of the 
(commutative) Lie algebras. (See \cite{MazMess}, p. 13–14, where it 
would be denoted $\mathrm{Extrig}(A, \hG)$.) The $R$-module $\Ext^\sharp(A,\hG)$
 is canonically isomorphic to the de Rham cohomology $\Hdr(A)$
of the abelian scheme $A$. Consider the Hodge sequence associated to $A$
$$
0 \map H^0(A,\Omega_A) \map \Ext^\sharp(A,\hG) \map \Ext(A,\hG) \map 0.
$$

We have a natural map of $R$-modules 
$$
\tilde{\Phi}: \Hom(N^nA,\hG) \longrightarrow \Ext^\sharp(A,\hG)
$$
given by
$$
\Psi \mapsto [(\Psi_*(J^nA),s_\Psi)].
$$
Hence $\tilde{\Phi}$ induces $\tilde{\Upsilon}: \bX_n(A) \map H^0(A,\Omega_A)$ 
as follows:
\begin{align}
\xymatrix{
0 \ar[r] & \bX_n(A) \ar[d]_{\tilde{\Upsilon}} \ar[r] & \Hom (N^nA,\hG) 
\ar[d]_{\tilde{\Phi}}
 \ar[r]^-\partial & \Ext(A,\hG) \ar@{=}[d]\\
0 \ar[r] & H^0(A,\Omega_A) \ar[r] & \Ext^\sharp(A,\hG) \ar[r] & \Ext(A,\hG) 
\ar[r] & 0
}
\end{align}

By Proposition $6.1~(3)$ in \cite{BS_b}, $\tilde{\Phi}$ restricted to 
$\iota^*\phi^*(\bX_{n-1}(A))$ is $0$. Taking quotients and passing to the 
limit by varying $n$, we obtain the following map of short exact sequences 
(c.f diagram ($8.1$) in \cite{BS_b})
\begin{align}
\label{short}
\xymatrix{
0 \ar[r] & \bXp(A) \ar[d]_\Upsilon \ar[r] & 
\bH_\delta(A) \ar[d]_\Phi \ar[r] &\bI(A) \ar@{^{(}->}[d] \ar[r] &  0 \\
0 \ar[r] & H^0(A,\Omega_A) \ar[r] & \Hdr(A) \ar[r] & H^1(A,\Ou_A)
 \ar[r] & 0, 
}
\end{align} 
where $\bI(A)$ is as in (\ref{bI}).
Note that the map $\Upsilon$ above coincides with 
\eqref{Upsilon-new} (see the proof of Theorem $8.2$ of \cite{BS_b}) in
the case of abelian schemes.

%\cred
%However the following semi-abelian generalised version will hold without any change in the argument. 

%\begin{theorem}
%\label{upsilon}
%The diagram (\ref{short}) is a map of short exact sequences of $K$-modules. 

%The map $\Upsilon$ is an isomorphism of $K$-vector spaces if and only if
%the $g\times g$ matrix $\Gamma$ is invertible over $K$. Hence $\Phi$ is injective if and only if $\Gamma$ is invertible over $K$.
%\end{theorem}
In Section \ref{SUpsilon}, we will show that $\Gamma$ is invertible over $K$, eventually to prove Theorem \ref{isoupsilon}. Towards that we need some preparation.
\cblue

\section{$N^1X$ and reduction mod $\pi$ of $X$}\label{mod-p}

A $\pi$-formal scheme $X$ with a fixed morphism $e:S \map X$ will be called
a {\em pointed $\pi$-formal scheme}. Let the tuple $(X,e)$ be a pointed $\pi$-formal scheme. For all $n \geq 0$, consider $N^{n}X = J^{n}X \times_{X} S$ which is 
the following fiber product
$$\xymatrix{
J^{n}X \ar[d] & N^{n}X \ar[l]_{i} \ar[d] \\
X & S. \ar[l]_-{e} 
}$$
\cblue
In this section we will establish a functorial isomorphism between
$N^1X$ and the set of points of $X$ that reduce to a fixed point in $\ov{X}$
over the residue field $l$ of $R$. To do so we need some preparation.

Let $B$ be an $R$-algebra and $\ov{B}:= B\otimes_R \frac{R}{\pi R}$. 
Then we have
\begin{align}
\label{Frob}
\xymatrix{
W_n(B)\ar[d]_F \ar[r] & W_n(\ov{B}) \ar[d]^{F}\\
W_{n-1}(B) \ar[r] & W_{n-1}(\ov{B})
}
\end{align}  

Also for all $n$ we have
\begin{align}
\label{Fbar}
\xymatrix{
W_n(\ov{B}) \ar[r]^-T \ar[d]_F& \ov{B} \ar[d]^{\ov{F}} \\
W_{n-1}(\ov{B}) \ar[r]^-T & \ov{B}
}
\end{align}
where $\ov{F}(y) = y^q$ is the absolute Frobenius.

Combining (\ref{Frob}) and (\ref{Fbar}) we have 
$$
\xymatrix{
W_n(B) \ar[r] \ar[d]_F  & \ov{B} \ar[d]^{\ov{F}} \\
W_{n-1}(B) \ar[r] & \ov{B}
}
$$
Hence we obtain
\begin{align}\label{Frob-X}
\xymatrix{
J^nX(B)\ar[r]^-\sim \ar[d]^\phi & X(W_n(B)) \ar[r] \ar[d]^{X(F)} & X(\ov{B})
\ar[d]^{X(\ov{F})} \ar[r]^\sim & \ov{X}(\ov{B})\ar[d]^{\bar{\phi}}\\
J^{n-1}X(B)\ar[r]^-\sim & X(W_{n-1}(B)) \ar[r] & X(\ov{B}) \ar[r]^\sim
 & \ov{X}(\ov{B})
}
\end{align}

Given a pointed $\pi$-formal scheme $(X,e)$, let $\bar{e}:\ov{S} \map \ov{X}$ be the 
corresponding morphism after base changing to $\ov{S}$.
Consider the subfunctor $X(\pi) \subset X$ given by
\begin{align}
\label{piX}
 X(\pi)(C):= \{ v \in X(C)|~ \bar{v} = \bar{e} \}
\end{align}
for any $\pi$-adically complete $R$-algebra $C$.

Let $G$ be a group scheme and let $\ov{G}:= G \times_S \ov{S}$ and $P(t) \in
\Z[t]$ be the characteristic polynomial satisfied by $\bar{\phi} \in 
\End(\ov{G})$. Then we have 
\begin{align}
\xymatrix{
J^nG(B) \ar[r] \ar[d]_{P(\phi)} & \ov{G}(\ov{B}) \ar[d]^{P(\bar{\phi})} \\
G(B) \ar[r] & \ov{G}(\ov{B})
}
\end{align}
Then we have $\mathrm{Im}(P(\phi)) \subseteq G(\pi)$.

Consider the subfunctor $(\phi \circ i)(N^1X) \subset X$ given by 
\begin{align}
\label{Imphi}
(\phi \circ i)(N^1X)(C)= \{v \in X(C) |~\exists~ w \in N^1X(C)
\mb{ such that } v= (\phi \circ i) \circ w \}
\end{align}
for all $\pi$-adically complete $R$-algebra $C$.
Then note that $(\phi \circ i)(N^1X) \subset X(\pi)$.

\begin{proposition}
\label{Nniso}
Let $f:U \map V$ be an \'{e}tale morphism of pointed affine $\pi$-formal 
schemes. Then
$N^n U \simeq N^nV$. 
\end{proposition}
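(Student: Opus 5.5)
The plan is to use Buium's étale base change result (Proposition 1.4 in \cite{bui95}, recalled in Section \ref{localcoordinates}): for an étale morphism $f:U \map V$ of $\pi$-formal schemes, the square formed by $J^nf:J^nU \map J^nV$ and the projections $u$ to $U$ and $V$ is cartesian, that is
$$
J^nU \simeq U \times_V J^nV .
$$
Here $U$ and $V$ are pointed by $e_U:S\map U$ and $e_V = f\circ e_U$; the statement is only meaningful if $f$ respects the base points, so I take $f$ to be a pointed morphism.

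With this in hand the argument is a formal manipulation of fiber products. First,
$$
N^nU = J^nU\times_{U,e_U} S \simeq (U\times_V J^nV)\times_{U,e_U} S .
$$
Pasting fiber squares, this equals $J^nV\times_{V,\,f\circ e_U} S = J^nV\times_{V,e_V} S = N^nV$. This is exactly the computation already carried out in Section \ref{localcoordinates}, where $N^nG \simeq J^nH\times_H S$ for an étale chart $U\map H$. The resulting isomorphism is the restriction of $J^nf$ to kernels, so it is canonical. It is also compatible with $u$ and $\phi$, since $J^nf$ is a morphism of prolongation sequences, although only the isomorphism itself is required.

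The main obstacle is justifying the étale base change isomorphism in the generality of affine $\pi$-formal schemes using the functor-of-points description. Concretely, for a $\pi$-adically complete $R$-algebra $B$, I must show that the square
$$
\xymatrix{
U(W_n(B)) \ar[r] \ar[d] & V(W_n(B)) \ar[d] \\
U(B) \ar[r] & V(B)
}
$$
is cartesian. I would argue this via the formal étaleness of $f$. The kernel of $W_n(B)\map B$ is not nilpotent, but $W_n(B)$ is complete with respect to it together with $\pi$. Indeed, $V(\pi)$-adically it is a pro-nilpotent thickening modulo each $\pi^m$, because $W_n(B/\pi^m)\map B/\pi^m$ has nilpotent kernel: its elements are of the form $V(x)$ and the $(n+1)$-fold product lands in the $\pi$-power ideal.

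So I would first reduce to $B$ annihilated by $\pi^m$, pass to the limit over $m$ using that $U$ and $V$ are $\pi$-formal and affine, and then apply the infinitesimal lifting property of étale morphisms to the nilpotent thickening $W_n(B)\map B$. This gives the unique lift. If I cannot establish the nilpotence cleanly, the fallback is simply to cite \cite{bui95} Proposition 1.4 (or \cite{bor11b}), as the paper already does in Section \ref{localcoordinates}.
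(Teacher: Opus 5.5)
Your proposal is correct and is essentially the paper's argument: the paper also uses Buium's \'{e}tale base change $J^nU\simeq U\times_V J^nV$ and then pastes fiber products, written in ring form as $J_nB\hat{\otimes}_B A\hat{\otimes}_A R = J_nB\hat{\otimes}_B R$. Your sketch of why the base change holds is a sound justification of a step the paper simply cites: you reduce modulo $\pi^m$, note that $W_n(B/\pi^m)\map B/\pi^m$ then has nilpotent kernel, and apply formal \'{e}taleness.
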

\begin{proof}
Let $U = \Spf A$ and $V= \Spf B$. Then $J^nU \simeq J^nV \times_V U \simeq
J_nB \hat{\otimes}_B A$. Then
\beqar
N^nU & \simeq & J^nU \times_U S \\
&=& \Spf~ (J_nB \hat{\otimes}_B A \hat{\otimes}_A R) \\
&=& \Spf~ (J_nB \hat{\otimes}_B R) \\
& =& N^nV.
\eeqar
\end{proof}

Let $V= \Spf R[\bx]\h$ be a pointed scheme with $\bx=(x_1,\dots, x_d)$ 
with the marked $S$-point given by 
the $R$-algebra morphism $e^*:R[\bx]\h \map R,~ \bx \mapsto 0$. Let 
$\bar{e}:k[\bx] \map k$ be the corresponding reduction modulo $\pi$, 
$\ov{S}$-point on $\ov{X}$. Then 
$$
V(\pi) (C) = (\pi C)^d
$$
for all $\pi$-adically complete $R$-algebra $C$.
Also note that 
$$
((\phi\circ i)(N^1V))(C) = \{v:R[\bx]\h \map C |~ v = (\phi\circ i)^* \circ w
\mb{ for some } w: R[\bx_1]\h \map C\}
$$
and $(\phi \circ i)^*: R[\bx]\h \map R[\bx_1]\h$ is given by $\bx \map \pi 
\bx_1$. Therefore for any $\pi$-torsion free $C$ we have 
$((\phi \circ i)(N^1V))(C) = (\pi C)^d$. Hence we have an isomorphism
\begin{align}
\label{phipiiso}
(\phi \circ i):(N^1V) \simeq V(\pi),
\end{align}
since $V$ is smooth over $S$.
\begin{proposition}
\label{equalpi}
Let $f:U \map V$ be an \'{e}tale morphism of pointed affine $\pi$-formal 
schemes. Then $U(\pi) = V(\pi) $.
\end{proposition}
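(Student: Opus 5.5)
Here $f$ identifies $U(\pi)$ with $V(\pi)$ via composition with $f$, i.e.\ $f$ induces an isomorphism of subfunctors $U(\pi)\simeq V(\pi)$. The plan is to show that for every $\pi$-adically complete $R$-algebra $C$ the map $v\mapsto f\circ v$ is a bijection from $U(\pi)(C)$ onto $V(\pi)(C)$. This is formal étale lifting combined with the fact that $C$ is $\pi$-adically complete.

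First, well-definedness. Since $f$ is a pointed morphism, $f\circ e_U=e_V$. So if $\bar v=\bar e_U$, then $\overline{f\circ v}=\bar f\circ\bar v=\bar f\circ \bar e_U=\bar e_V$, and $f\circ v\in V(\pi)(C)$.

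For bijectivity, fix $w\in V(\pi)(C)$. By definition $w$ reduces modulo $\pi$ to $\bar e_V=\bar f\circ\bar e_U$. I therefore want to lift $w$ along $f$ starting from the point $\bar e_U\in U(C/\pi C)$; more precisely $\bar e_U$ composed with $C/\pi\to$ is $S$-point pulled back to $C/\pi C$. The lifting is done inductively along $C\to\cdots\to C/\pi^{m+1}C\to C/\pi^mC\to\cdots\to C/\pi C$. Each map is a square-zero thickening, since $(\pi^mC)^2\subset\pi^{m+1}C$. Formal étaleness of $f$ then gives existence and uniqueness at each step. Suppose $u_m\in U(C/\pi^m C)$ has already been chosen with $f\circ u_m=w \bmod \pi^m$. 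Then there is a unique $u_{m+1}\in U(C/\pi^{m+1}C)$ lifting $u_m$ with $f\circ u_{m+1}=w\bmod\pi^{m+1}$. Because $U$ is a $\pi$-formal affine scheme and $C=\varprojlim C/\pi^mC$, we have $U(C)=\varprojlim U(C/\pi^mC)$. The compatible system $(u_m)$ thus defines $v\in U(C)$ with $f\circ v=w$ and $\bar v=\bar e_U$, so $v\in U(\pi)(C)$. This gives surjectivity.

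Injectivity comes from the same uniqueness. Two points $v,v'\in U(\pi)(C)$ with $f\circ v=f\circ v'$ agree modulo $\pi$, since both reduce to $\bar e_U$. Inductively, uniqueness of étale lifts makes them agree modulo every $\pi^m$, hence $v=v'$ by completeness. Naturality in $C$ is clear, since all the constructions are by composition.

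The only point I expect needs care is the justification that $f$ satisfies the infinitesimal lifting property for square-zero thickenings of $\pi$-adically complete (or $\pi$-nilpotent) $R$-algebras. This should be taken as the definition of étale for $\pi$-formal schemes, equivalently étaleness of each reduction $U\otimes R/\pi^m\to V\otimes R/\pi^m$. Note that $C/\pi^mC$ is an $R/\pi^m$-algebra, so the classical lifting property for étale morphisms of schemes applies at each finite level. This argument mirrors the one behind Proposition~\ref{Nniso}.
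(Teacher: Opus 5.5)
Your proof is correct and follows essentially the same route as the paper: both use the unique infinitesimal lifting property of the étale map $f$ along the thickenings $\Spec(C/\pi C)\hookrightarrow\Spec(C/\pi^n C)$, starting from $\bar{e}$, and then pass to the limit using that $C$ is $\pi$-adically complete. The differences are only cosmetic. You lift step by step through square-zero thickenings and make injectivity explicit. The paper instead lifts along $C/\pi^nC\to C/\pi C$ in one step for each $n$, and leaves injectivity implicit in the uniqueness of the lifts.
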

\begin{proof}
Let $U= \Spf A$ and $V=\Spf B$ and $U(\pi) \map V(\pi)$ be the natural map 
between functors. Let $g \in (V(\pi))(C/\pi^n C)$ for any $\pi$-adically 
complete $R$-algebra. Then we have
$$
\xymatrix{
U \ar[d]_-f & \Spf (C/\pi C) \ar[l]_-{\bar{e}} \ar@{^{(}_->}[d] \\
V & \Spf (C/\pi^n C) \ar[l]^-g
}
$$
Since $f:U \map V$ is \'{e}tale, $g$ has a unique lift $h: \Spec(C/\pi^n C)
\map U$ making the above diagram commutative for all $n \geq 1$. Since $C$
is $\pi$-adically complete, this proves our result.
\end{proof}

\begin{proposition}
\label{phiiso}
Let $X$ be a pointed smooth $\pi$-formal $S$-scheme. Then the morphism 
$\phi \circ i$ induces the isomorphism
$$
(\phi\circ i): (N^1X) \simeq X(\pi). 
$$
\end{proposition}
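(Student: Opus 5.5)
The proof is a reduction to the affine-space case \eqref{phipiiso} via a local étale chart at the marked point, using Propositions \ref{Nniso} and \ref{equalpi} to move both sides of the claimed isomorphism along that chart.

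First, both $N^1X$ and $X(\pi)$ only see an open neighbourhood of the marked section. A point $w\in N^1X(C)$ is a $C$-point of $J^1X$ lying over $e$, hence lies in $J^1U$ for any affine open $U\ni e$. A point $v\in X(\pi)(C)$ reduces to $\bar e$ modulo $\pi$, so the underlying map of topological spaces lands in $\bar U$ and $v$ factors through $U$ (recall $C$ is $\pi$-adically complete). So we may replace $X$ by an affine open $U$ containing $e(S)$. Since $X$ is smooth, we can shrink $U$ so that it admits an étale map $f:U\map V=\Spf R[\bx]\h$. Translating the coordinates, we may assume $f\circ e$ is the point $\bx\mapsto 0$, so that $f$ is an étale morphism of pointed affine $\pi$-formal schemes.

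Next, consider the commutative square
$$\xymatrix{
N^1U \ar[r]^{\phi\circ i} \ar[d]_{N^1f} & U(\pi) \ar[d]^{f} \\
N^1V \ar[r]^{\phi\circ i} & V(\pi).
}$$
The top arrow lands in $U(\pi)$ because $(\phi\circ i)(N^1X)\subset X(\pi)$, by \eqref{Frob-X}. The square commutes by functoriality of $J^1$, $\phi$ and $i$ in $X$. The left vertical map is an isomorphism by Proposition \ref{Nniso}, and the right vertical map is an isomorphism by Proposition \ref{equalpi}. The bottom arrow is the isomorphism \eqref{phipiiso}. Hence the top arrow is an isomorphism, which gives the claim.

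The main obstacle is making the bottom isomorphism \eqref{phipiiso} hold on all test algebras $C$, not only the $\pi$-torsion-free ones. Concretely, $N^1V=\Spf R[\bx_1]\h$ and $(\phi\circ i)^*(\bx)=\pi\bx_1$. On $C$-points the map is therefore $\bx_1\mapsto\pi\bx_1$, from $C^d$ to $(\pi C)^d$, and this fails to be injective when $C$ has $\pi$-torsion. I would handle this in one of two ways. The first option is to read $X(\pi)$ as the representable functor given by the formal dilatation of $X$ along $\bar e$, i.e. $\Spf R[\bx,\bx/\pi]\h=\Spf R[\bx_1]\h$ in local coordinates. Under that reading the identification is an honest isomorphism of formal schemes, and Proposition \ref{equalpi} transports it correctly since it is proved by lifting through nilpotent thickenings. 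The second option, if one insists on the naive functor, is to state and use the result only on $\pi$-torsion-free $C$ (flat $R$-algebras), which is the setting used in later sections. I would adopt the dilatation interpretation and check that the étale-invariance argument of Proposition \ref{equalpi} is compatible with it.
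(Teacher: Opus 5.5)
Your proposal is correct and follows the same route as the paper. You pass to an affine neighbourhood $U$ of $e$ with an étale chart $U\to\Spf R[\bx]\h$, transport $N^1$ via Proposition \ref{Nniso} and $X(\pi)$ via Proposition \ref{equalpi}, and conclude from \eqref{phipiiso}; your commutative square and coordinate translation just make explicit what the paper leaves implicit.

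Your caveat about $\pi$-torsion is well founded. The paper only verifies \eqref{phipiiso} on $\pi$-torsion-free $C$, and for instance $C=R/\pi R$ gives $N^1V(C)=(R/\pi R)^d$ but $V(\pi)(C)=0$. So one of your two fixes is genuinely needed for the statement to hold literally: either the dilatation reading, or restricting to flat test algebras (which is all the later factorization through $\mfg$ uses).
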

\begin{proof}
Since $X$ is smooth over $S$, there exists an affine open subscheme $U\subset
X$ containing $e$ such that there exists an \'{e}tale morphism $f:U \map V$ 
such that $V = \Spf R[\bx]\h$, where
$\bx=(x_1,\dots, x_d)$ and $d$ is the relative dimension of $X$ over $S$.
Note that $N^1X \simeq N^1U$ and $X(\pi) \simeq U(\pi)$. By 
Proposition \ref{Nniso} we have $N^1X \simeq N^1V$. Then the result follows 
from (\ref{phipiiso}) and Proposition \ref{equalpi}.
\end{proof}

\cb{
Let $\Gfor$ be the formal group law associated to a group scheme $G$ of relative 
dimension $g$ over $S$. Then
by Corollary $11.1.6$ of \cite{Haz}, let 
\begin{align}
\exp_G : TG^\mathrm{for} \simeq (\bb{G}^\mathrm{for}_a \otimes K)^g 
\map \Gfor \otimes K
\end{align}
denote the exponential map associated to the formal group law $\Gfor$,
which is an isomorphism of formal group laws with $\log_G$ as its inverse.

\begin{theorem}
\label{formaliso}
Let $\beta:G \map G$ be an endomorphism of group schemes and let $\beta: \Gfor
\map \Gfor$ still denote the induced endomorphism on the associated formal
group law $\Gfor$. Then we have the following commutative diagram of 
formal group laws:
$$\xymatrix{
\Gfor \otimes K \ar[d]_{\log_G} \ar[r]^\beta & \Gfor \otimes K \ar[d]^{\log_G}\\
(\hGa \otimes K)^g \ar[r]^{D\beta} & (\hGa \otimes K)^g.
}$$
In particular we have 
$$
\log_G(\beta(\bx)) = D\beta \log_G (\bx)
$$
for all $\bx \in \Gfor \otimes K$.
\end{theorem}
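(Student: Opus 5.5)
The plan is to use the characterization of $\log_G$ as the unique strict isomorphism from $\Gfor\otimes K$ to the additive formal group law $(\hGa\otimes K)^g$ whose linear term is the identity. This is Corollary 11.1.6 of \cite{Haz}, as recalled above. Since $\Gfor$ is commutative and $K$ has characteristic $0$, such a logarithm exists and is unique. Write $\beta(\bx)\in R[[\bx]]^g$ for the power series giving the induced endomorphism of $\Gfor$; its linear part is $D\beta$, the derivative at the identity.

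First I will show that $\log_G\circ\beta\circ\exp_G$ is an endomorphism of $(\hGa\otimes K)^g$. It is a composite of homomorphisms of formal group laws over $K$: $\exp_G$, $\beta$ (which commutes with the group law because $\beta$ is a group scheme endomorphism, so it remains a homomorphism after tensoring with $K$), and $\log_G$. Next I will use that every endomorphism of the additive formal group $(\hGa\otimes K)^g$ over a $\QQ$-algebra is linear. Indeed, if $h(\bx+\by)=h(\bx)+h(\by)$, differentiating in $\by$ at $\by=0$ shows the Jacobian of $h$ is constant, so $h$ is linear because $h(0)=0$. Hence $\log_G\circ\beta\circ\exp_G$ equals its own linear part.

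I will then compute this linear part. Both $\log_G$ and $\exp_G$ have identity linear term, so by the chain rule the linear part of the composite is $D\beta$. This gives $\log_G\circ\beta\circ\exp_G = D\beta$, and composing on the right with $\log_G$ yields $\log_G(\beta(\bx))=D\beta\,\log_G(\bx)$, which is the commutative diagram claimed.

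The main point of care is the second step, that additive endomorphisms are linear, since this needs characteristic $0$ (the power series $x^p$ is additive in characteristic $p$); this is exactly why we work after $\otimes K$. An alternative route is via invariant differentials: $d\log_G$ is the vector of normalized invariant differentials, and $\beta^*$ acts on invariant differentials by the transpose of $D\beta$. That argument also needs characteristic $0$, so it offers no advantage, and I would use the power series argument above.
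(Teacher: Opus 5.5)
Your proposal is correct and follows the paper's proof essentially step for step: conjugate $\beta$ by $\exp_G$ and $\log_G$, observe that the composite is an endomorphism of the additive formal group over $K$ with linear term $D\beta$, conclude that it equals $D\beta$, and then compose with $\log_G$. The one difference is that you also justify why additive endomorphisms are linear in characteristic $0$ (the Jacobian is constant, so $h$ is linear since $h(0)=0$), a step the paper simply asserts.
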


\begin{proof}
Let $\bx$ be a coordinate system around the identity section of $\Gfor$. We
denote by the same $\bx$, the induced coordinate on $TG^{\mathrm{for}}$
giving the isomorphism $TG^{\mathrm{for}} \simeq (\hGa)^g$.

In terms of the coordinate $\bx$, we have
$$
\beta(\bx) = B\bx  \pmod {\bx^2}  
$$
where $B = D \beta$ is the derivative matrix of $\beta$ at the identity section 

Also we have 
\begin{align}
\nonumber\exp_G(\bx) &= \bx \pmod{\bx^2} \\
\nonumber \log_G(\bx) &= \bx \pmod{\bx^2} 
\end{align}
where the above maps are inverses of each other in the category of
formal group laws over $K$. Then consider the composition

\begin{align}
\nonumber
(\hGa \otimes K)^g \stk{\exp_G}{\longrightarrow} \Gfor \otimes K 
\stk{\beta}{\longrightarrow}
 \Gfor \otimes K \stk{\log_G}{\longrightarrow} (\Gfor \otimes K)^g.
\end{align}
We have 
$\gamma(\bx):= \log_G \circ \beta \circ \exp_G (\bx) = B\bx \pmod{\bx^2} $. 
Since $\gamma$ is an endomorphism of $(\hGa)^g$, $\gamma$ must be a linear 
operator. 
Therefore $\gamma (\bx) = B \bx $ and hence we have
\begin{align}
\log_G \circ \beta \circ \exp_G &= D\beta. \nonumber 
\end{align}
Since $\log_G$ and $\exp_G$ are inverses to each other, we obtain
\begin{align}
 \log_G \circ \beta  &= D\beta \circ \log_G \nonumber 
\end{align}
and we are done.
\end{proof}

Let $\bx$ be an \'{e}tale coordinate of $G$ around the identity section. 
Then $\phi \circ \iota: N^1 G \map G(\pi)$ is given $\bx \mapsto \pi \bx$. 
Let $\Psi_1: N^1G \map \hG^g$ be the delta character as constructed 
in Proposition $4.1$ in \cite{Buiumcomp} which is given as
\begin{align}
\label{Psi1}
\Psi_1(\bx) := \pi^\nu\log_G \circ (\phi \circ \iota) (\bx) =
\pi^\nu \log_G(\pi \bx)
\end{align}
where $\nu$ is as in the proof of Proposition $4.1$ in \cite{Buiumcomp}.
By Theorem \ref{formaliso} we have 
\begin{align}
\nonumber
\pi^\nu \log_G(\beta(\pi \bx)) &= D\beta\left(\pi^\nu\log_G(\pi\bx) \right) \\
\nonumber \Psi_1 \circ \beta (\bx) &= D\beta \circ \Psi_1(\bx)
\end{align}
Hence we have the following commutative diagram
\begin{align}
\label{phider}
\xymatrix{
N^1G \ar[d]_{\phi \circ \iota} \ar@/_4pc/[dd]_{\Psi_1}\ar[r]^\beta & N^1G 
\ar@/^4pc/[dd]^{\Psi_1} \ar[d]^{\phi \circ \iota} \\
G(\pi) \ar[d]_{\pi^\nu\log_G} \ar[r]^\beta & G(\pi)  
\ar[d]^{\pi^\nu\log_G} \\
\hG^g \ar[r]^{D\beta} & \hG^g.
}
\end{align}
}

\section{The map $\Upsilon$}
\label{SUpsilon}

One of the objectives in this section will be to prove that the map
$\Upsilon_K$ as defined in \eqref{Upsilon-new} induces a $K$-linear
isomorphism between $\bXp(G)_K$ and $(\Lie G)^*_K:= \Hom(\Lie G,\hG)_K$ 
when $G$ is a semi-abelian $\pi$-formal scheme over $S$. 
 
 From now on we consider $R = W(\bF_q)$ where $q$ is a power of our 
prime $p$ with $k = \bF_q$ as its residue field and the fixed lift of 
Frobenius $\phi$ on $R$ is taken to be the identity map. 
Hence a semilinear
operator for an $F$-isocrystal over $K$ are linear operators.

However, while
working with various identities below, the reader will find that we 
still write the endomorphism $\phi$ for coefficients belonging to
 $R$ even though it is the identity on it
(as for example see equation ($\ref{phiup}$) below). This is done to remember
when the coefficients in $R$ are $\phi$-twisted to have better clarity
as quite a few of such expressions remain true for a more general discrete
valuation ring $R$ where the Frobenius $\phi$ does not act as an 
identity operator.

From Theorem \ref{comfact} and Proposition \ref{diff} the morphism $\Delta :
N^1G \map J^{2g}G$ gives us for all $j=1,\dots , g$ the pull-back of the 
primitive characters  
$$\Delta^*\Theta_j:N^1G \map \hG$$
that are given by
\begin{align}
\label{commute}
\Delta^*\Theta_j = \gamma_j.\Psi_1
\end{align}
where $\gamma_j = \pi A_{0j}$. 
For all $n \geq m_{\mathrm{u}}$, and any $f \in \bX_n(G)$ we have
\begin{align}
f = \sum_{j=1}^r \left(\sum_{s=0}^{o_j} c_{sj}\phi^{\circ(n-s)*}\Theta_{i_j}
\right)
\end{align}
Then by Proposition \ref{diff} we get
\begin{align}
\label{phiup}
\Delta^*f &
= (\phi(c_{01}) \gamma_{i_1} + \cdots + \phi(c_{0r})\gamma_{i_r}).\Psi_1.
\end{align}

Let $\oG$ denote the special fiber of $G$ over the residue field $l$ of $R$ and
$F$ be the $q$-th power Frobenius on $\oG$. Suppose 
$$
P(t)= t^{r} + b_{r-1}t^{r-1} + \cdots + b_1 t + b_{0}
$$ be a polynomial 
in $\bZ[t]$ such that $P({F})=0$ in $\End(\oG)$.

For any $\pi$-adically complete $R$-algebra $B$ we have the following 
commutative diagram
\begin{equation}\label{frob-diag}
\xymatrix{
\bb{W}_{r}^g(B) \ar[r]^-\sim &J^{r}(\hG^g)(B) \ar[d]_-{P(\mfrak{f})} 
 & N^{r+1}G(B) \ar[l]_{J^{r}(\Psi_1)}
\ar[r]^-{\phi\circ \iota} \ar[d]_-{P(\mfrak{f})} & J^{r}G(B) \ar[r] 
\ar@{.>}[dl]_\mfg \ar[d]_-{P(\mfrak{\phi})} & \oG(B\otimes k) 
\ar[d]_-{P({F})} \\
& \hG^g(B) \ar[d]_{\mathrm{pr_j}} & N^1G(B) \ar[l]_{\Psi_1}
\ar[r]^-{\phi \circ \iota} & G(B) \ar[r]^-h & \oG(B\otimes k)\\
& \hG (B) & & & 
}
\end{equation}
where $\mathrm{pr}_j: \hG^g \map \hG$ is the projection on the $j$-th component
for all $i=1,\dots,g$.
Note that $\mathrm{Im(\phi \circ \iota)} = \mathrm{Ker}(h)$. Since $P(F)=0$,
we have that $P(\phi)$ factors through $\mfg:J^{r}G(B) \map N^1G$ 
by Proposition \ref{phiiso}.

\cblue
In particular, when $G$ is an abelian scheme of dimension $g$, we have the diagram as above for the characteristic polynomial $P(t)= t^{2g} + b_{2g-1}t^{2g-1} + \cdots + b_1 t + q^g$ of the $q$-th power Frobenius on $\overline{G}$. 

For any group scheme $\ov{G}$ over $\Spec k$, let $P_{\ov{G}}(t)$ denote the 
characteristic polynomial which is satisfied by the absolute $q$-th power 
Frobenius ${F}_{\ov{G}}$ on $\ov{G}$, that is $P_{\ov{G}}({F}) = 0$.

\begin{proposition}
\label{Frob-char}
Let $\ov{G}$ be a semi-abelian scheme over $\Spec k$ satisfying the following 
extension of smooth group schemes
$$
0 \map \ov{T} \stk{f}{\map} \ov{G} \stk{g}{\map} \ov{A} \map 0,
$$
where $\ov{T}$ is a multiplicative torus and $\ov{A}$ is an abelian scheme 
over $\Spec k$. Then
$$
P({F}_{\bar{G}}) = 0 
$$
where $P(F) = P_{\bar{T}}(F) \circ P_{\bar{A}}(F)$.

In particular, we have 
$$
P(F) = F^{2g+1} + (b_{2g-1}-q)  F^{2g} + \cdots + (q^g-qb_1) F - q^{g+1}.
$$
\end{proposition}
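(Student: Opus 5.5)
The plan is to use that the absolute $q$-power Frobenius is functorial, so that it commutes with the morphisms $f$ and $g$ of the extension, and then to factor $P_{\ov{A}}(F_{\ov{G}})$ through the torus.

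\textbf{Step 1 (naturality).} For any morphism of $k$-group schemes $h:X\map Y$ we have $F_Y\circ h = h\circ F_X$. This holds because $F$ is the absolute $q$-power Frobenius, which is $k$-linear since $k=\bF_q$. Consequently, for every $Q(t)\in\Z[t]$ we get $Q(F_{\ov{G}})\circ f = f\circ Q(F_{\ov{T}})$ and $g\circ Q(F_{\ov{G}}) = Q(F_{\ov{A}})\circ g$. Here $Q(F)$ is formed in the endomorphism ring of a commutative group scheme, using the group law for sums and integer multiples.

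\textbf{Step 2 (factor through the torus).} Applying Step 1 with $Q=P_{\ov{A}}$ gives $g\circ P_{\ov{A}}(F_{\ov{G}}) = P_{\ov{A}}(F_{\ov{A}})\circ g = 0$. Since $\ov{T}\stk{f}{\map}\ov{G}$ is the kernel of $g$ in the category of commutative group schemes over $k$, the homomorphism $P_{\ov{A}}(F_{\ov{G}})$ factors uniquely as $f\circ h$ for some homomorphism $h:\ov{G}\map\ov{T}$. This is the only point needing a little care: one must use that $f$ is a closed immersion identifying $\ov{T}$ with the scheme-theoretic kernel of $g$, which is part of the meaning of the exact sequence.

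\textbf{Step 3 (conclude).} Using Step 1 again,
$$P_{\ov{T}}(F_{\ov{G}})\circ P_{\ov{A}}(F_{\ov{G}}) = P_{\ov{T}}(F_{\ov{G}})\circ f\circ h = f\circ P_{\ov{T}}(F_{\ov{T}})\circ h = 0.$$
Since both factors are polynomials in the single endomorphism $F_{\ov{G}}$, they commute, and the order of composition is irrelevant. This proves $P(F_{\ov{G}})=0$.

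\textbf{Step 4 (explicit formula).} For the stated formula, take $\ov{A}$ of dimension $g$ with characteristic polynomial $t^{2g}+b_{2g-1}t^{2g-1}+\cdots+b_1t+q^g$, and take $\ov{T}$ to be a split torus. On a split torus the absolute $q$-Frobenius is $x\mapsto x^q$, which is multiplication by $q$, so $P_{\ov{T}}(t)=t-q$ works. Expanding
$$(t-q)\bigl(t^{2g}+b_{2g-1}t^{2g-1}+\cdots+b_1t+q^g\bigr)$$
gives $t^{2g+1}+(b_{2g-1}-q)t^{2g}+\cdots+(q^g-qb_1)t-q^{g+1}$, as claimed.

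If $\ov{T}$ is not split, it splits after a finite extension of degree $m$. Then $F^m=[q^m]$ on $\ov{T}$, so one uses $P_{\ov{T}}(t)=t^m-q^m$ or the appropriate minimal polynomial instead. The general statement is unaffected, and only the explicit formula changes.
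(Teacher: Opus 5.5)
Your proposal is correct and follows essentially the same route as the paper. Frobenius commutes with $g$, so $P_{\ov{A}}(F_{\ov{G}})$ lands in the torus, where it is killed by $P_{\ov{T}}(F)$; the explicit formula then comes from expanding with $P_{\ov{T}}(t)=t-q$. Your factorization through the scheme-theoretic kernel is a cleaner version of the paper's pointwise argument, and your remark that $P_{\ov{T}}(t)=t-q$ needs $\ov{T}$ to be split is a correct caveat the paper leaves implicit (for instance, on the norm-one torus attached to $\mathbb{F}_{q^2}/\mathbb{F}_q$ one has $F=[-q]$).
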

\begin{proof}
Consider the following diagram satisfied by the absolute Frobenius $\bar{F}$
$$\xymatrix{
\ov{G} \ar[d]_{P_{\bar{A}}(\bar{F})} \ar[r]^g & \ov{A} 
\ar[d]^{P_{\bar{A}}(\bar{F})}\\
\ov{G} \ar[r]^g & \ov{A}. 
} $$
For all $x \in \bar{G}$ we have
 $f \circ P_{\bar{A}}(F)(x) = P_{\bar{A}}(F)(f({x})) = 0$. Therefore 
$P_{\bar{A}}(x) \in \mathrm{Image} (f)$. Hence we have 
$P_{\bar{T}}(F)(P_{\bar{A}}(F)(x)) = 0$ and this completes the proof of the first 
part.

The second part follows from the fact that $P_{\bar{T}}(F)= F - q$ and 
$P_{\bar{A}}(F) = F^{2g} + b_{2g-1}F^{2g-1} + \cdots + b_1 F + q^g$ for some 
$b_i \in \Z,~ i = 1,\dots , 2g-1$ and $g = \dim \bar{A}$. Then for all $x \in
\ov{G}$ we have
\beqar
P(F)(x) &=& P_{\bar{T}}(F)(P_{\bar{A}}(F)(x)\\
&=& P_{\bar{T}}(F)
(F^{2g}(x) + b_{2g-1}F^{2g-1}(x) + \cdots + b_1 F(x) + q^g(x))\\
&=& (F - q)
(F^{2g}(x) + b_{2g-1}F^{2g-1}(x) + \cdots + b_1 F(x) + q^g(x))\\
&=&
(F^{2g+1} + (b_{2g-1}-q)F^{2g} + \cdots + (q^g-qb_1)F - q^{g+1})(x)
\eeqar
and this proves our result.
\end{proof}

\color{black}

\begin{theorem}
\label{isoupsilon}
Let $G$ be a $\pi$-formal smooth commutative group scheme of finite type 
over $S$ such that the absolute Frobenius $F$ on $\ov{G}$ satisfies $P(F) =0$ where 
$$
P(F) = F^m + b_{m-1} F^{m-1} + \cdots + b_1 F + q^h
$$
for some $h \in \bb{Z}_{\geq 0}$.
%\color{blue}
Then $\Gamma \in \Mat_{g \times g}(K)$ as in (\ref{Gammamatrix}) is invertible.

\color{black}

\end{theorem}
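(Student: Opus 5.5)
We will show that the image of $\Gamma$, i.e. the $R$-span of the row vectors $\gamma_1,\dots,\gamma_g$, contains $c\,\pi^{N}\cdot \Mat_{1\times g}(R)$ for a nonzero integer $c$ and some $N$. The idea is to use diagram (\ref{frob-diag}) with the given polynomial $P(F)=F^m+b_{m-1}F^{m-1}+\cdots+b_1F+q^h$. This produces, for each $j=1,\dots,g$, a delta character whose pull-back by $\Delta$ is a multiple of the $j$-th coordinate of $\Psi_1$. Comparing with (\ref{phiup}) then shows that every standard row vector, up to a unit of $K$, lies in $K$-span of the $\gamma_i$.

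Concretely, since $P(F)=0$ on $\oG$, Proposition \ref{phiiso} lets $P(\phi):J^{m}G\map G$ factor through $\mfg:J^mG\map N^1G$ with $(\phi\circ\iota)\circ\mfg=P(\phi)$. For each $j$ we define the delta function $\Theta^{(j)}:=\mathrm{pr}_j\circ\Psi_1\circ\mfg : J^mG\map \hG$. It is a group homomorphism, since $\mfg$ is (uniquely determined by the monomorphism $\phi\circ\iota$ over torsion-free test rings) and $\Psi_1$ is a character, so $\Theta^{(j)}\in\bX_m(G)$.

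The plan is then to compute $\Delta^*\Theta^{(j)}$ in two ways.
\begin{enumerate}
\item Using Theorem \ref{comfact}, Proposition \ref{diff} and the commutativity of the middle square of (\ref{frob-diag}), we get $\mfg\circ\phi\circ\iota = P(\fra)$ on $N^{m+1}G$. So $\iota^*\Theta^{(j)}$ is $\mathrm{pr}_j\Psi_1\circ P(\fra)$, and in particular $\Theta^{(j)}$ restricted along $\iota\circ$ (the appropriate shift) produces $q^h\,\mathrm{pr}_j\Psi_1$ plus terms in the image of $\fra^*$.
\item More directly, we compute the derivative $D\Theta^{(j)}$ at the identity: the constant term $q^h\cdot\mathbbm{1}$ of $P(\phi)$ contributes $A_0$-component $q^h\pi^{\nu}\cdot\pi^{-1}e_j$ (via $\Psi_1=\pi^\nu\log_G(\pi\bx)$ and $D(\phi\circ\iota)=\pi$). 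The $\phi^{k}$ terms with $k\ge1$ only affect higher Witt components, because $D\phi$ kills the $\bx_0$-direction up to the factor $\pi$ already accounted for.
\end{enumerate}
Hence $\gamma_{\Theta^{(j)}}=\pi A_0 = q^h\pi^{\nu}e_j$ (up to a unit), which is nonzero in $K$.

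On the other hand, $\Theta^{(j)}$ is an $R$-combination of the $\phi^{*s}\Theta_i$ by (\ref{Xprimiso}). So by (\ref{phiup}), $\gamma_{\Theta^{(j)}}$ lies in the $R$-span of $\gamma_1,\dots,\gamma_g$. Thus $q^h\pi^\nu e_j\in R\langle\gamma_1,\dots,\gamma_g\rangle$ for all $j$, so the rows of $\Gamma$ span $K^g$ and $\Gamma$ is invertible over $K$.

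The main obstacle is the derivative computation in the second step. We must verify carefully, in Witt coordinates, that the $A_0$-block of $D(\Psi_1\circ\mfg)$ is exactly $q^h$ times the contribution of the identity. For this we would write $\mfg=(\phi\circ\iota)^{-1}\circ P(\phi)$ on $\pi$-torsion-free points, linearize using $D\phi(\bx_0,\bx_1,\dots)=(\pi\bx_1,\dots)$ (its $\bx_0$-derivative vanishes since $\bx_0^q$ has zero linear part), and use Theorem \ref{formaliso} to pass $\log_G$ through. The delicate point is the division by $\pi$ in $(\phi\circ\iota)^{-1}$, where $D(\phi\circ\iota)=\pi$. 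This division is why we use the integral factorization from Proposition \ref{phiiso} rather than a formal inverse, and it is why we should check that the only $\bx_0$-linear term is $q^h\bx_0$.
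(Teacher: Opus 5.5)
Your proposal is correct and is essentially the paper's proof: the paper uses the same characters $\tTheta_j=\mathrm{pr}_j\circ\Psi_1\circ\mfg$ and computes $\Delta^*\tTheta_j=\iota^*\phi^*\tTheta_j-\fra^*\iota^*\tTheta_j=q^h e_j\Psi_1$ as in your item (1), then compares with (\ref{phiup}) to get $C\Gamma=q^h\mathbf{1}_g$; in item (1), note that it is $\iota^*\phi^*\Theta^{(j)}$, not $\iota^*\Theta^{(j)}$, that equals $\mathrm{pr}_j\Psi_1\circ P(\fra)$, and the remaining terms are exactly $\fra^*\iota^*\Theta^{(j)}$ because $\mfg\circ\iota=\sum_{k=1}^{m}b_k\fra^{k-1}$ on $N^mG$ with $b_m=1$. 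Your derivative route in item (2) also works and the flagged ``obstacle'' is harmless, since $\Theta^{(j)}=\pi^\nu\,\mathrm{pr}_j\circ\log_G\circ P(\phi)$ directly gives $A_0=\pi^\nu q^h e_j$ (your extra factor $\pi^{-1}$ is a slip, irrelevant over $K$).
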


\begin{proof}

For all $j=1,\dots, g$, define the delta character of order $2g$ as 
$\tTheta_j:J^{2g}G \map \hG$ given by
\begin{align}
\tTheta_j:= \mathrm{pr}_j \circ \Psi_1 \circ \mfg.
\end{align}
Then by the above diagram we have 
\begin{align}
(\phi \circ \iota)^* f_j = \pr_j \circ \Psi_1 \circ P(\fra).
\end{align}
Note that the morphism $\Psi_1 \circ P(\fra): N^{2g+1}G \map \hG^g$ is given
by 
\begin{align}
\Psi_{m+1}+ b_{m-1}\Psi_{m} + \cdots + q^h \Psi_1.
\end{align}

Hence we have
\beqar
(\phi \circ \iota)^* \tTheta_j &=& \pr_j \circ (\Psi_1 \circ P(\fra))\\
&=& \pr_j \circ (\Psi_{m+1} + b_{m-1}\Psi_{m}+ \cdots + q^h \Psi_1 )\\
&=& \pr_j \circ (\Psi_{m+1} + b_{m-1}\Psi_{m}+ \cdots b_1\Psi_2) 
+ \pr_j \circ q^h \Psi_1. \\
\eeqar
By Proposition \ref{diff} (1), note that 
$$
\fra^* (\iota^* \tTheta) = \pr_j \circ (\Psi_{m+1} + b_{m-1}\Psi_{m}+ 
\cdots b_1\Psi_2).
$$

Therefore we have 
\begin{align}
\label{phi-up2}
\Delta^* \tTheta_j := (\phi \circ \iota - \iota \circ \fra)^*\tTheta_j =
 q^h e_j. \Psi_1
\end{align}
where $e_j$ is the $j$-th row of the $g\times g$ identity matrix 
$\mathbf{1}_{g}$.
On the other hand by (\ref{phiup}), for all $j$ there exists 
$c_{1j},\dots, c_{gj} \in R$ such that
\begin{align}
\label{phi-up3}
\Delta^* \tTheta_j &= (\phi(c_{1j}) \gamma_1 + \cdots + 
\phi(c_{gj}) \gamma_g). \Psi_1
\end{align}
Hence comparing equations (\ref{phi-up2}) and (\ref{phi-up3}) for all $j$ we obtain that 
\begin{align}
\label{phi-up4}
C. \Gamma =  q^h \mathbf{1}_g,
\end{align}
where $C= (\phi(c_{ij}))_{ij} \in \Mat_{g\times g}(R)$. Hence $\Gamma$ is invertible
over $K$ and we are done.
\end{proof}

\begin{theorem}
\label{isoupsemi}
Let $G$ be semi-abelian $\pi$-formal group scheme over $S$ of relative 
dimension $g$. 
Then the  $R$-linear map $\Upsilon: \bXp(G) \map (\Lie G)^*$ is an 
injective morphism of $R$-modules with $\pi$-power torsion cokernel.

In particular, $\Upsilon: \bXp(G)_K \map (\Lie G)^*_K$ is an isomorphism
of $K$-vector spaces.
\end{theorem}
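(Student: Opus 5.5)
The plan is to reduce Theorem \ref{isoupsemi} to Theorem \ref{isoupsilon} by means of Proposition \ref{Frob-char} and the matrix description (\ref{Upsilonmatrix}) of $\Upsilon$.

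First, write $G$ over $S$ as an extension of a $\pi$-formal abelian scheme by a torus. Its special fiber $\oG$ is then an extension $0 \map \ov{T} \map \oG \map \ov{A} \map 0$ of an abelian variety by a torus over $k$. Proposition \ref{Frob-char} gives $P(F)=0$ on $\oG$ with
$$P(F) = F^{2g'+1} + (b_{2g'-1}-q)F^{2g'} + \cdots + (q^{g'}-qb_1)F - q^{g'+1},$$
where $g'=\dim \ov{A}$. If the torus has rank $t>1$, I take $P_{\ov{T}}(F)=F-q$; this suffices, since Frobenius acts on a split torus over $k$ (or on $\hGm^t$) as multiplication by $q$. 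If the torus is non-split, I instead use the characteristic polynomial of $F$ on its character group, whose constant term is $\pm q^{t}$. Either way, the constant term of $P$ is $\pm q^h$ for some $h\ge 0$. The sign is harmless: in the proof of Theorem \ref{isoupsilon} it only changes (\ref{phi-up4}) to $C\Gamma = \pm q^h\mathbf{1}_g$. So the hypothesis of Theorem \ref{isoupsilon} holds, and $\Gamma\in \Mat_{g\times g}(R)$ is invertible over $K$. Moreover there is a $C\in\Mat_{g\times g}(R)$ with $C\Gamma = \pm q^h\mathbf{1}_g$.

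Next, I translate this into statements about $\Upsilon$. By (\ref{Xprimiso}), $\bXp(G)$ is free with primitive basis $\Theta_1,\dots,\Theta_g$. By (\ref{Upsilon-new}) and (\ref{Upsilonmatrix}), $\Upsilon$ sends $\Theta_i$ to $\tilde\Gamma$'s $i$-th row $A_{0i}$, written in the dual basis of $(\Lie G)^*$ determined by the étale coordinates $\bx$. Hence the matrix of $\Upsilon$ is $\tilde\Gamma = \pi^{-1}\Gamma\in\Mat_{g\times g}(R)$, and it is integral because $\gamma_\Theta=\pi A_0$ (Proposition \ref{diff}). From $C\Gamma=\pm q^h\mathbf{1}_g$ we get $(\pi C)\tilde\Gamma = \pm q^h\mathbf 1_g$, so $\det\tilde\Gamma\neq 0$. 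Since a square matrix over the domain $R$ with nonzero determinant defines an injective map $R^g\map R^g$, $\Upsilon$ is injective. Its cokernel is annihilated by $\det\tilde\Gamma$, or more concretely by $q^h$, because $q^h v = \tilde\Gamma^{T}$-image of $\pm(\pi C)^{T}v$ (taking transposes to match the row convention). In $R=W(\bF_q)$, $q$ is a unit times a power of $\pi$, so the cokernel is $\pi$-power torsion. Tensoring with $K$ then gives the isomorphism $\Upsilon_K$.

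The main obstacle I expect is the bookkeeping around Theorem \ref{isoupsilon}: checking that its proof really goes through for semi-abelian $G$ with the degree $m=2g'+1$ (or $2g'+t$) polynomial. This requires that $\Delta$ and the expansion (\ref{phiup}) apply to the character $\tTheta_j$ of order $m$ rather than $2g$, and that the factorization of $P(\phi)$ through $N^1G$ via Proposition \ref{phiiso} holds for the non-proper $G$. Smoothness is all that Proposition \ref{phiiso} uses, so I would verify this directly and note that the orders are simply adjusted to $m$.
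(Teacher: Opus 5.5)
Your proposal is correct and is essentially the paper's proof: use (\ref{Upsilonmatrix}) to reduce to invertibility of $\Gamma$ over $K$, get an annihilating Frobenius polynomial for $\oG$ from Proposition \ref{Frob-char}, and apply Theorem \ref{isoupsilon}. Your additions fill in steps the paper leaves implicit when it says it suffices to prove the $K$-isomorphism: the sign of the constant term $-q^{g+1}$ (and $\pm q^h$ for a non-split torus) is harmless because only a nonzero constant term in $R$ is needed, and injectivity with $\pi$-power torsion cokernel follows directly from $\det\tilde\Gamma\neq 0$ over the DVR $R$.
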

\cblue
\begin{proof}
It is sufficient to show that $\Upsilon$ is an isomorphism of 
$K$-vector spaces. By \eqref{Upsilonmatrix}, it is then enough to show 
that $\Gamma$ is an invertible matrix over $K$.
Since $G$ is a semi-abelian scheme of dimension $g$ over $S$, by Proposition
\ref{Frob-char}, the absolute $q$-power Frobenius $F$ on $\ov{G}$ satisfies
the polynomial $P(F)$  in the proposition. 
Hence the result follows from Theorem \ref{isoupsilon}.
\end{proof}

\section{Non-degeneration of the filtered $F$-isocrystals}
\label{non-deg}
In this section, as a consequence of Theorem \ref{isoupsilon}, we will show 
that if $G$ is a commutative smooth $\pi$-formal scheme over $S$, then
$\bH_\d(G)_K$ with the  operator $\fra^*$ is a non-degenerate filtered $F$-isocrystal,
that is, $\fra^*$ is bijective map.

Let $\mathbb{B}= \{\Theta_1,\dots, \Theta_g\}$ be a basis of primitive characters 
as in equation (\ref{Xprimiso}), that is
$$
\bXp(G) \simeq R\langle \Theta_1,\dots , \Theta_g\rangle.
$$ 
where $o_i := \ord{\Theta_i} \leq \mup$ for all $i=1,\dots , g$ and $\mup$ is the upper 
splitting number defined in Section \ref{pre}. For any $\lam \in R$, we will denote 
$\lam^\phi := \phi(\lam)$. Again, we would like to remind the reader of our remark 
at the beginning of Section \ref{SUpsilon} regarding keeping track of the Frobenius twists 
of the coefficients belonging to $R$ even though $\phi$ acts by identity on it.

For any delta character $\Theta$ let
$$
S_n(\Theta):= \{\phi^{*(n-i)}\Theta \mid i = \ord{\Theta},\ord{\Theta}+1, 
\dots , n\}
$$
for all $n \geq \ord{\Theta}$. Recall from page $418$ of \cite{BS_b}
that for all $n\geq \mup$ we have
the set $S_n(\Theta_1),\dots S_n(\Theta_g)$ is $K$-linearly independent and 
freely generates $\bX_n(G)_K$ as a $K$-module.

Define the delta characters $\tTheta_i:= \phi^{*(n-o_i)}\Theta_i$ for all 
$n\geq \mup$ and $i=1,\dots, g$.
Hence the quotient $K$-module $\bX_n(G)_K/u^*\bX_{n-1}(G)_K$ is 
generated by the set $\{\tTheta_1,\dots ,\tTheta_g\}$ as a $K$-module.

Suppose for all $i=1,\dots, g$ we have 
\begin{align}
\iota^* \tTheta_i = 
\lam_{i{\mup}}\Psi_{\mup} - \lam_{i({\mup} -1)}\Psi_{{\mup} -1} 
- \cdots - \lam_{i1} \Psi_1
\end{align}
where $\lam_{ij} \in \Mat_{1\times g}(R)$ for all $j = 1,\dots \mup$.
Then we have 
\begin{align}
\fra^* \iota^* \tTheta_i = 
\lam_{i{\mup}}^\phi\Psi_{{\mup}+1} - \lam_{i({\mup} -1)}^\phi\Psi_{{\mup}} 
- \cdots - \lam_{i1}^\phi \Psi_2.
\end{align}
Hence from Proposition $6.2$ in \cite{BS_b} we have some $\gamma_i \in
\Mat_{1\times g}(R)$ such that
\begin{align}
\label{phimu}
%\nonumber
\iota^*\phi^* \tTheta_i &= \fra^*\iota^* \tTheta_i + \gamma_i \Psi_1\\
&= 
\lam_{i{\mup}}^\phi\Psi_{{\mup}+1} - \lam_{i({\mup} -1)}^\phi\Psi_{{\mup}} 
- \cdots - \lam_{i1}^\phi \Psi_2 + \gamma_i\Psi_1.
\end{align}
For all $j=1,\dots ,\mup$ define the $g \times g$ matrices 
$$\Lambda_{j}:= \left(\begin{matrix}
\lam_{1j}^\phi \\ \vdots \\ \lam_{gj}^\phi \\
\end{matrix}
\right).$$
Therefore the map
$$
[\iota^*\phi^*]:\bX_n(G)_K/u^*\bX_{n-1}(G)_K {\longrightarrow}
\Hom(N^{n+1}G,\hG)_K/u^*\Hom(N^nG,\hG)_K$$
is given by
$[\iota^*\phi^*] \tTheta_i = \lam^\phi_{i\mup} \Psi_{{\mup}+1}$ for all 
$i=1,\dots, g$. 

Hence the map $[\iota^*\phi^*]$ in terms of the basis
$\{\tTheta_1,\dots,\tTheta_g\}$ of $\bX_{\mup}(G)/u^*\bX_{{\mup}-1}(G)$
and basis $\{\Psi_{\mup+1}\}:= \{\Psi_{(\mup+1)1},\dots , \Psi_{(\mup+1)g}\}$ of 
$\Hom(N^{\mup +1}G,\hG)/u^*\Hom(N^{\mup}G,\hG)$ is given by
\begin{align}
[\iota^*\phi^*] = \Lambda_{\mup}
\end{align}

Recall the following proposition from the proof of Proposition $8.1$ in 
\cite{BS_b}
\begin{proposition}
\label{Lambdamu}
For all $n \geq \mup$, we have
$$
\bX_n(G)_K/u^*\bX_{n-1}(G)_K \stk{[\iota^*  \phi^*]}{\longrightarrow}
\Hom(N^{n+1}G,\hG)_K/u^*\Hom(N^nG,\hG)_K
$$
is an isomorphism. 

In particular, for $n=\mup$, we have 
$$
\bX_{\mup} (G)_K/u^*\bX_{\mup-1}(G)_K \stk{[\iota^*  \phi^*]}{\longrightarrow}
\Hom(N^{{\mup} +1}G,\hG)_K/u^*\Hom(N^\mup G,\hG)_K
$$
is an isomorphism and hence $\Lambda_{\mup}$ is an invertible matrix over $K$.
\end{proposition}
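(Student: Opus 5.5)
The statement to be proved is Proposition \ref{Lambdamu}: for all $n \geq \mup$, the map $[\iota^*\phi^*]$ from the graded piece $\bX_n(G)_K/u^*\bX_{n-1}(G)_K$ to $\Hom(N^{n+1}G,\hG)_K/u^*\Hom(N^nG,\hG)_K$ is an isomorphism. The plan is to compare dimensions and prove injectivity, working throughout in the Witt coordinates $\bx_1,\dots,\bx_{n+1}$ on $N^{n+1}G$. In these coordinates $\Hom(N^{n+1}G,\hG)_K$ has the $K$-basis $\{\Psi_{j}\}_{j\le n+1}$, where each $\Psi_j$ is a $g$-tuple obtained as $(\fra^{j-1})^*\Psi_1$. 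This uses Theorem \ref{NandJ}, $N^{n+1}G\simeq J^n(N^1G)$, together with the fact that $\Psi_1$ identifies $N^1G$ with $\hG^g$ over $K$, and the description of $\bX_n(\hG)$.

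\textbf{Step 1: the target has dimension $g$.} The quotient $\Hom(N^{n+1}G,\hG)_K/u^*\Hom(N^nG,\hG)_K$ is spanned by the images of $\Psi_{(n+1)1},\dots,\Psi_{(n+1)g}$, so it has dimension $g$.

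\textbf{Step 2: the source has dimension $g$.} For $n\ge\mup$, the source $\bX_n(G)_K/u^*\bX_{n-1}(G)_K$ has $K$-basis $\tTheta_1,\dots,\tTheta_g$, by the freeness statement from page 418 of \cite{BS_b} recalled above: $S_n(\Theta_i)$ minus $S_{n-1}(\Theta_i)$ contributes exactly $\phi^{*(n-o_i)}\Theta_i$.

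\textbf{Step 3: well-definedness.} The map $[\iota^*\phi^*]$ is well defined because $\phi$ commutes with $u$, so $\iota^*\phi^*u^*\bX_{n-1}\subset u^*\Hom(N^nG,\hG)$.

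\textbf{Step 4: the map is an isomorphism.} Given Steps 1 and 2, it suffices to show injectivity, or equivalently that the top coefficient matrix of $\iota^*\phi^*\tTheta_i$ is invertible. By \eqref{phimu}, $\iota^*\phi^*\tTheta_i$ equals $\fra^*\iota^*\tTheta_i$ modulo $\Psi_1$. Hence, modulo lower terms, the top coefficient of $\iota^*\phi^*\tTheta_i$ is the $\phi$-twist of the top coefficient of $\iota^*\tTheta_i$ in $\Hom(N^nG,\hG)/u^*\Hom(N^{n-1}G,\hG)$. For $n\ge 2$ this gives a commutative square relating $[\iota^*\phi^*]$ at level $n$ to $[\iota^*]$ on $\bX_n/u^*\bX_{n-1}$, with $\fra^*$ (an isomorphism on graded pieces, since $\phi=\mathbbm{1}$ on $R$) linking them. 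So it is enough to show that
$$[\iota^*]:\bX_n(G)_K/u^*\bX_{n-1}(G)_K\to \Hom(N^nG,\hG)_K/u^*\Hom(N^{n-1}G,\hG)_K$$
is injective. For this I would argue as follows. Suppose a combination $f=\sum c_i\tTheta_i$ satisfies $\iota^*f\in u^*\Hom(N^{n-1}G,\hG)_K$. Using the exact sequence $0\to\bX_0\to\bX_n\to\Hom(N^nG,\hG)\to\Ext$ and comparing with the corresponding sequence at level $n-1$ (both compatible with $u$), one finds $f\in u^*\bX_{n-1}(G)_K+\bX_0$, hence $f\in u^*\bX_{n-1}(G)_K$. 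This requires that the element of $u^*\Hom(N^{n-1}G,\hG)$ lies in the image of $\iota^*$ at level $n-1$. That holds after tensoring with $K$ precisely when $n-1\ge\mup-1$, by the stabilization $\bI_{n-1}(G)_K=\bI_n(G)_K$ for $n\ge\mup$. This stabilization is where the hypothesis $n\ge\mup$ enters.

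The main obstacle is this last step: showing that the $\Ext$-images $\bI_n(G)_K$ stabilize from $\mup$ on, so that no new extension classes appear beyond the upper splitting number. I would derive this from the dimension count $\dim\bX_n(G)_K=\sum_i(n-o_i+1)$, which is linear in $n$ with slope $g$ for $n\ge\mup$, together with $\dim\Hom(N^nG,\hG)_K=ng$. Then $\dim\bI_n(G)_K=ng-\dim\bX_n(G)_K+\dim\bX_0(G)_K$ is constant for $n\ge\mup$, and since $\bI_n\subset\bI_{n+1}$, the spaces are equal.
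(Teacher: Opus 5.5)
The paper gives no argument of its own for this statement; it quotes it from the proof of Proposition~8.1 in \cite{BS_b}. Your outline is a sound self-contained route: both graded pieces are $g$-dimensional over $K$, and on graded pieces $[\iota^*\phi^*]=[\fra^*]\circ[\iota^*]$ with $[\fra^*]\colon\Psi_j\mapsto\Psi_{j+1}$ bijective. Steps 1--3 and the reduction in Step~4 are fine. The reduction already works for $n=1$, since $\fra\colon N^2G\to N^1G$ exists and $N^0G=0$; you need that case when $\mup=1$, e.g.\ for CL elliptic curves or tori.

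The problem is your justification of the lifting step. You have $f\in\bX_n(G)_K$ with $\iota^*f=u^*\psi$, and you need $\psi\in\iota^*\bX_{n-1}(G)_K$, i.e.\ $\partial_{n-1}\psi=0$, where $\partial_m$ is the connecting map at level $m$. The stabilization $\bI_{n-1}(G)_K=\bI_n(G)_K$ does not give this. It only compares images in $\Ext(G,\hG)$, and a dimension count using it yields $\dim\bigl(\ker\partial_n\cap u^*\Hom(N^{n-1}G,\hG)_K\bigr)\ge\dim\ker\partial_{n-1}$, which is the wrong inequality.

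What actually does the job is the identity $\partial_n\circ u^*=\partial_{n-1}$. It holds because $u\colon J^nG\to J^{n-1}G$ is a morphism of extensions of $G$ restricting to $u\colon N^nG\to N^{n-1}G$, so pushing out along $\psi\circ u$ gives the same class as pushing out along $\psi$. This identity is also what underlies the inclusion $\bI_{n-1}\subset\bI_n$ that you yourself use. With it, $\partial_{n-1}\psi=\partial_n(\iota^*f)=0$, so $\psi=\iota^*h$ and $f-u^*h\in\ker\iota^*=\bX_0(G)_K\subset u^*\bX_{n-1}(G)_K$. This holds for every $n\ge 1$. So $[\iota^*]$ and $[\iota^*\phi^*]$ are injective on graded pieces with no hypothesis on $n$, and your claim that the lift exists ``precisely when $n-1\ge\mup-1$'' is false.

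The hypothesis $n\ge\mup$ enters only through Step~2. By exactness of $0\to\bX_0\to\bX_n\to\Hom(N^nG,\hG)\to\bI_n\to 0$ one gets
\[
\dim_K\bX_n(G)_K/u^*\bX_{n-1}(G)_K=g-\dim_K\bI_n(G)_K/\bI_{n-1}(G)_K .
\]
So your ``main obstacle'' is equivalent to Step~2, not an extra ingredient. Your slope argument also only reaches $n\ge\mup+1$. The case $n=\mup$ follows from the primitive-basis description: if $\sum c_i\tTheta_i\in u^*\bX_{\mup-1}(G)_K$, apply $\phi^*$ and use that the $\phi^*\tTheta_i$ are independent modulo $u^*\bX_{\mup}(G)_K$.

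Once the lift is justified by $\partial_n\circ u^*=\partial_{n-1}$, your argument is complete. It rests on the same Borger--Saha primitive-basis facts the paper relies on.
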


\subsection{The operator $\fra^*$}
Let us review the description of the operator $\fra^*$ on $\bH_\d(G)_K$.
Recall that 
$$
\bH_\d(G)_K := \varinjlim_{n} 
\frac{\Hom(N^nG ,\hG)_K}{\iota^* \phi^* \bX_{n-1}(G)_K}
$$
and $\fra^*$ on $\bH_\d(G)_K$ is induced from the pullback on the delta 
characters of $N^nG$ given by $\fra^* \Psi_i = \Psi_{i+1}$ for all $i \geq 1$.
Since $\fra^*(\iota^* \phi^* \bX_{n-1}(G)) \subset \iota^* \phi^* \bX_n(G)$
for all $n\geq 1$, $\fra^*$ induces a semilinear operator on $\bH_\d(G)_K$.

By equation (\ref{phimu}) and Proposition \ref{Lambdamu}, we have 
\begin{align}
\fra^* \Psi_{\mup} = \Psi_{\mup+1} \equiv
\Lambda_{\mup}^{-1}\left( 
\Lambda_{\mup-1}\left(\Psi_{\mup} + \cdots + \Lambda_1 \Psi_2 - \Gamma \Psi_1\right)\right) \bmod
\iota^*\phi^* \bX_{\mup-1}(G)_K
.\nonumber
\end{align}

By Proposition $8.1$ of \cite{BS_b} we have for all $n \geq \mup$ 
$$
\bH_\d(G)_K \simeq \bH_n(G)_K \simeq \bH_{\mup}(G)_K.
$$
where $$\bH_{\mup}(G)_K ={\Hom(N^{\mup}G,\hG)}/{\iota^*\phi^* \bX_{\mup-1}(G)}
\simeq K\langle\Psi_1,\dots , \Psi_{\mup}\rangle/
\iota^*\phi^* \bX_{\mup -1}(G),$$
since $\Hom(N^{\mup}G,\hG) \simeq K\langle \Psi_1, \dots ,\Psi_{\mup}\rangle$.

Consider the following operator (still denoted as $\fra^*$) on 
$\Hom(N^{\mup}G,\hG)_K \simeq K\langle \Psi_1,\dots , \Psi_{\mup}\rangle$ 
given by
\begin{align}
\fra^*(\Psi_i) = \Psi_{i+1} \mb{ for all } i= 1, \dots, \mup -1, \mb{ and }
\nonumber \\
\fra^* \Psi_{\mup} = \Lambda_{\mup}^{-1} \left({\Lambda_{\mup-1}} 
\Psi_{\mup} + \cdots + \Lambda_1 \Psi_2 - \Gamma \Psi_1\right).\nonumber
\end{align}

Note that the above map descends to $\fra^*$ on the quotient $\bH_\d(G)_K$.

Then the $(g\mup  \times g\mup)$-matrix associated to $\fra^*$ with 
respect to the basis $\{\Psi_1, \dots, \Psi_{\mup}\}$ is given by 
\begin{align}
[\fra^*] = 
\left(
\begin{matrix}
[{\bf{0}}]_g & [{\bf{0}}]_g & \cdots & & - \Lambda_{\mup}^{-1}\Gamma \\
[{\bf{1}}]_g & [{\bf{0}}]_g &  & & \Lambda_{\mup}^{-1}\Lambda_1 \\
[{\bf{0}}]_g & [{\bf{1}}]_g &  & &   \\
\vdots & & \ddots & & \vdots \\
[{\bf{0}}]_g & [{\bf{0}}]_g & \cdots  &[\bf{1}]_g & 
\Lambda_{\mup}^{-1}\Lambda_{\mup-1} \\
\end{matrix}
\right)
\end{align}
where $[{\bf{0}}]_g$ and $[{\bf{1}}]_g$ are the $g \times g$ zero and the identity 
matrices respectively. 

\begin{theorem}
\label{bijfra}

If $G$ is a semi-abelian $\pi$-formal scheme of relative dimension $g$ over
$S$, then the operator $\fra^*: \bH_\d(G)_K \map 
\bH_\d(G)_K$ is a bijection 

Hence $\mathrm{\Iso}(\bH_\d(G)_K)$ is a non-degenerate filtered isocrystal.
\end{theorem}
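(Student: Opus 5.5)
The plan is to read bijectivity off the block companion matrix $[\fra^*]$ written just above. We work on $\Hom(N^{\mup}G,\hG)_K \simeq K\langle \Psi_1,\dots,\Psi_{\mup}\rangle$ with the lifted operator $\fra^*$. By Proposition 8.1 of \cite{BS_b}, $\bH_\d(G)_K \simeq \bH_{\mup}(G)_K$, and this is the quotient of that space by $\iota^*\phi^*\bX_{\mup-1}(G)_K$. Since $\phi$ is the identity on $R=W(\bF_q)$, $\fra^*$ is $K$-linear. So on the finite dimensional space $\bH_\d(G)_K$ it suffices to prove injectivity, or equivalently surjectivity.

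First I would show that the lifted operator is invertible on $K\langle\Psi_1,\dots,\Psi_{\mup}\rangle$. Expanding the determinant of the block companion matrix along its block structure gives $\det[\fra^*] = \pm\det(\Lambda_{\mup}^{-1}\Gamma)$. Proposition \ref{Lambdamu} says $\Lambda_{\mup}$ is invertible over $K$. For a semi-abelian $G$, Proposition \ref{Frob-char} supplies a polynomial $P$ with $P(F)=0$ and constant term $-q^{g+1}$, a unit in $K$ up to sign. Theorem \ref{isoupsilon} is stated for constant term $q^h$, but its proof only uses that the constant term is a nonzero integer; equivalently this is Theorem \ref{isoupsemi} via \eqref{Upsilonmatrix}. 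Either way $\Gamma$ is invertible over $K$. Hence the top-right block $-\Lambda_{\mup}^{-1}\Gamma$ is invertible, and so is $[\fra^*]$.

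Second I would pass to the quotient. Since $\fra^*$ preserves $V:=\iota^*\phi^*\bX_{\mup-1}(G)_K$ (as noted above, using Proposition 6.3(2) of \cite{BS_b}), the lifted map is injective on $V$. Because $V$ is finite dimensional, $\fra^*(V)=V$. An invertible linear map $M$ of a finite dimensional space with $M(V)=V$ induces a bijection on the quotient: if $M(x)\in V=M(V)$, then $x\in V$ by injectivity. This gives injectivity on $\bH_\d(G)_K$, and dimension count gives bijectivity.

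The main obstacle I expect is the compatibility in that second step. The operator $\fra^*$ on $K\langle\Psi_1,\dots,\Psi_{\mup}\rangle$ was defined via the congruence modulo $\iota^*\phi^*\bX_{\mup-1}(G)_K$ in \eqref{phimu}. So I must check carefully that this lift really maps $V$ into $V$ and agrees with the genuine $\fra^*$ on $\bH_{\mup}(G)_K$ under the stabilization isomorphisms $\bH_{\mup}\simeq\bH_{\mup+1}$. I would verify this by computing $\fra^*\iota^*\phi^*\Theta = \iota^*\phi^*\phi^*\Theta$ for $\Theta\in\bX_{\mup-1}(G)$, and checking that the reduction of $\Psi_{\mup+1}$ via \eqref{phimu} keeps it in $V$. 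Once that holds, non-degeneracy follows, and with the filtration $\bXp(G)_K$ this gives that $\mathrm{\Iso}(\bH_\d(G)_K)$ is a non-degenerate filtered isocrystal.
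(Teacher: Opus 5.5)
Your proposal is correct and is essentially the paper's proof. In both, $\Gamma$ is invertible over $K$ by the relation $C\Gamma=(\text{const})\,\mathbf{1}_g$ from the proof of Theorem \ref{isoupsilon} applied to the polynomial of Proposition \ref{Frob-char}, $\Lambda_{\mup}$ is invertible by Proposition \ref{Lambdamu}, so $\det[\fra^*]=\pm\det(\Lambda_{\mup}^{-1}\Gamma)\neq 0$ on $K\langle\Psi_1,\dots,\Psi_{\mup}\rangle$, and bijectivity then descends to $\bH_{\mup}(G)_K\simeq\bH_\d(G)_K$.

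Your explicit quotient argument via $\fra^*(V)=V$, and your remark that only the nonvanishing of the constant term matters ($-q^{g+1}$ rather than $q^h$), fill in points the paper passes over. The compatibility you flag does hold, because $u^*\colon\bH_{\mup}(G)_K\to\bH_{\mup+1}(G)_K$ is injective, which follows from Proposition \ref{Lambdamu}.
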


\begin{proof}
Since $G$ is a semi-abelian $\pi$-formal scheme over $S$, by Theorem 
\ref{Frob-char} the absolute Frobenius $F$ on $\ov{G}$ satisfies $P(F) = 0$ 
for some $P(t) = t^m + b_{m-1}t^{m-1} + \cdots + b_1 t + q^h \in \bb{Z}[t]$.
By equation (\ref{phi-up4}) in the 
proof of Theorem \ref{isoupsilon} we have $\Gamma = 
(\gamma_1 \cdots \gamma_g)^t$ is an invertible $g\times g$ matrix over $K$.
Hence we have 
$$
\det [\fra^*] = (-1)^{\mup} \det (\Lambda_{\mup}^{-1} \Gamma) \ne 0.
$$
Therefore the semilinear map $\fra^*: \Hom(N^{\mup}G,\hG)_K \map 
\Hom(N^{\mup}G,\hG)_K$ is bijective. Hence this implies that the map $\fra^*$
on the quotient $\bH_\d(G)_K$ is also bijective.
\end{proof}

\section{Comparison of $\bH_\d(A)$ with $\bH^1_{\mathrm{cris}}(A)$ for 
elliptic curves }
\label{comparison}

Let $G$ be a $\pi$-formal group scheme over $S$. Consider the module of 
endomorphisms of $G$ over $S$ denoted by $\End (G)$. Given an element 
$\beta \in \End (G)$, for all $n$ we have an induced canonical 
endomorphism (still denoted by $\beta$) of $N^nG$ given by the dotted arrow in 
the following diagram:
\begin{align}
\label{kernelend}
\xymatrix{
0 \ar[r] & N^nG \ar@{.>}[d]^-\beta \ar[r]^\iota & J^nG \ar[d]^\beta \ar[r]^u & 
G \ar[d]^\beta \ar[r] & 0\\
0 \ar[r] & N^nG \ar[r]^\iota & J^nG \ar[r]^u & G \ar[r] & 0.\\
}
\end{align}
Therefore we obtain a map $\End(G) \map \End(N^nG)$.
Composing the above map we obtain a representation of $\End(G)$ as follows
\begin{align}
\label{repn}
\rho: \End(G) \map \End(N^nG) \map \End(\Hom(N^nG,\hG))
\end{align}
given by 
$$
\rho(\beta)(\Psi) := \Psi \circ \beta
$$
for all $\beta \in \End(G)$ and $\Psi \in \End(\Hom(N^nG,\hG))$.

In the case when $G$ is a smooth $\pi$-formal group scheme of dimension
$g$ over $S$, by \cite{BS_b} we have 
$$
\End(N^nG,\hG) \simeq R^{ng}.
$$
The equation (\ref{repn}) then becomes
\begin{align}
\label{repn2}
\rho : \End(G) \longrightarrow \End(\Hom(N^nG,\hG)) \simeq 
\Mat_{ng\times ng}(R).
\end{align}

\subsection{Elliptic curve $A$ with Frobenius lift}
Let $A$ be an elliptic curve with a Frobenius lift $\clift \in \End(A)$ on it.
Then $\clift$ satisfies the characteristic polynomial of the Frobenius
$$
t^2 - a_q(\ov{A}) t + q = 0
$$
where $\ov{A} = A \otimes k$. Then the equation (\ref{repn2}) for $n=1$ 
becomes
\begin{align}
\rho : \End(A) \map \End(\Hom(N^1A,\hG)) \simeq R.
\end{align}
In other words, for all $\beta \in \End(A)$ and $\Psi \in \Hom(N^1A,\hG)$
we have $\Psi \circ \beta = \rho(\beta) \Psi$, that is, the following diagram
is commutative
$$\xymatrix{
N^1A \ar[d]_\Psi  \ar[r]^\beta & N^1A \ar[d]^\Psi\\
\hG \ar[r]^{\rho(\beta)} & \hG.
}$$

Also note that for an elliptic curve $A$ over $S$, the Frobenius lift 
endomorphism $\beta \in \End(A)$ induces an $K$-endomorphism $\Fcris: 
H^0(A,\Omega_A)_K \map H^0(A,\Omega_A)_K$ given by
$\Fcris(v) = (D\beta) v$ where $D\beta$ is the derivative map induced by $\beta$
and the identity section for all $v \in H^0(A,\Omega_A)_K$. This makes 
$(H^0(A,\Omega_A)_K,\Fcris)$ into an $F$-isocrystal. 

In the case when $\Psi_1 \in \Hom(N^1A,\hG)$ is as in (\ref{Psi1}), by
(\ref{phider}) we have 
\begin{align}
\label{Psi1beta}
\xymatrix{
N^1A \ar[d]_{\Psi_1}  \ar[r]^\beta & N^1A \ar[d]^{\Psi_1}\\
\hG \ar[r]^{D\beta} & \hG.
}
\end{align}
We now recall some of the results from Section $9$ of \cite{BS_b}.
Let $A$ be an elliptic curve over $S$. Then we have the following two cases:
\begin{enumerate}
\item When $A$ is not a Frobenius lift then, $\bX_\infty(A)$ is generated 
by $\Theta_2 \in \bX_2(A) \backslash \bX_1(A)$ as an $R\{\d \}$-module. By
Proposition \ref{diff} we have
\begin{align}
\label{no-CL}
\iota^* \Theta_2 &= \Psi_2 - \lam \Psi_1, \mb{ and } \\
\iota^* \phi^* \Theta_2 &= \Psi_3 - \phi(\lam) \Psi_2+ \gamma \Psi_1. \nonumber
\end{align}
for some $\lam, \gamma \in R$.

\item When $A$ has a Frobenius lift then, $\bX_\infty(A)$ is generated by
$\Theta_1 \in \bX_1(A)$ as an $R\{\d\}$-module. By Proposition \ref{diff}
we have
\begin{align}
\label{with-CL}
\iota^* \Theta_1 &= \Psi_1, \mb{ and } \\
\iota^* \phi^* \Theta_1 & = \Psi_2 + \gamma \Psi_1. \nonumber
\end{align}
for some $\gamma \in R$.
\end{enumerate}

Recall Theorem $9.7$ of \cite{BS_b}:

\begin{theorem}
\label{fra-map}
Let $A$ be an elliptic curve over $S$.
\begin{enumerate}
\item If $A$ is not a Frobenius lift, then 
$$\bH_{\d}(A) \simeq R\langle \Psi_1, \Psi_2 \rangle, ~ \bXp(A) \simeq 
R\langle \Theta_2 \rangle.$$ 
The semilinear operator $\fra^*$ acts as 
$$\fra^*(\Psi_1) = \gamma \Psi_2,~ \fra^*(\Psi_2) = \phi(\lam)\Psi_2 -
\gamma \Psi_1.$$

\item If $A$ is a Frobenius lift, then 
$$\bH_{\d}(A) \simeq R\langle \Psi_1 \rangle, ~ \bXp(A) \simeq 
R\langle \Theta_1 \rangle.$$ 
The semilinear operator $\fra^*$ acts as 
$$\fra^*(\Psi_1) = -\gamma \Psi_1.$$
\end{enumerate}
\end{theorem}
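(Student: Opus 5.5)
The plan is to compute $\bH_\d(A)$ directly from its definition $\bH_\d(A)=\varinjlim_n \Hom(N^nA,\hG)/\iota^*\phi^*\bX_{n-1}(A)$, working in the basis $\Psi_1,\dots,\Psi_n$ of $\Hom(N^nA,\hG)$ and using the relations \eqref{no-CL} and \eqref{with-CL}. The first input is that $\Hom(N^nA,\hG)$ is free over $R$ on $\Psi_1,\dots,\Psi_n$, where $\Psi_i=(\fra^{i-1})^*\Psi_1$. This comes from Theorem \ref{NandJ}, $N^nA\simeq J^{n-1}(N^1A)$ with $N^1A\simeq\hG$ via $\Psi_1$, together with the description of $\bX_{n-1}(\hG)$. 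The pullback $u^*$ is then the inclusion of the span of $\Psi_1,\dots,\Psi_{n}$ into that of $\Psi_1,\dots,\Psi_{n+1}$, and $\fra^*\Psi_i=\Psi_{i+1}$. The second input, quoted from Section 9 of \cite{BS_b}, is that $\bX_n(A)$ is free over $R$ on the iterates $\phi^{*j}\Theta$ of the generator $\Theta$ ($=\Theta_2$ or $\Theta_1$) with $j+\ord\Theta\le n$. In particular $\bX_1(A)=0$ in the non-CL case, and $\bX_0(A)=\Hom(A,\hG)=0$ in both cases.

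\emph{Non-CL case.} Since $\bX_1(A)=0$, we get $\bH_2(A)=R\langle\Psi_1,\Psi_2\rangle$. For $n\ge 3$, the submodule $\iota^*\phi^*\bX_{n-1}(A)$ is spanned by the elements $\iota^*(\phi^{*})^{k}\Theta_2$ for $1\le k\le n-2$. By Proposition \ref{diff}(2) and \eqref{no-CL},
$$\iota^*(\phi^*)^k\Theta_2=(\fra^{k-1})^*\bigl(\Psi_3-\phi(\lam)\Psi_2+\gamma\Psi_1\bigr)=\Psi_{k+2}-\phi^k(\lam)\Psi_{k+1}+\phi^{k-1}(\gamma)\Psi_k .$$
These relations are monic in their top term, so they successively eliminate $\Psi_3,\dots,\Psi_n$. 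Hence every transition map $u^*\colon\bH_n(A)\to\bH_{n+1}(A)$ is an isomorphism and $\bH_\d(A)=R\langle\Psi_1,\Psi_2\rangle$. Moreover $\bXp(A)=\bX_\infty/\phi^*\bX_\infty$ is generated by the class of $\Theta_2$, and it is free because the $\phi^{*j}\Theta_2$ are a basis.

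The operator $\fra^*$ sends $\Psi_1\mapsto\Psi_2$. It sends $\Psi_2\mapsto\Psi_3$, which reduces to $\Psi_3\equiv\phi(\lam)\Psi_2-\gamma\Psi_1$ modulo the relation with $k=1$. This gives a companion matrix with entries $\phi(\lam)$ and $-\gamma$, which is the form stated once the basis is normalised as in \cite{BS_b}. I will check that normalisation, since it is what produces the coefficient of $\fra^*(\Psi_1)$ as written.

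\emph{CL case.} Here $\bX_0(A)=0$ gives $\bH_1(A)=R\Psi_1$. For $n\ge2$, the relations $\iota^*(\phi^*)^k\Theta_1=(\fra^{k-1})^*(\Psi_2+\gamma\Psi_1)=\Psi_{k+1}+\phi^{k-1}(\gamma)\Psi_k$ eliminate $\Psi_2,\dots,\Psi_n$. So $\bH_\d(A)=R\Psi_1$, $\bXp(A)=R\langle\Theta_1\rangle$, and $\fra^*\Psi_1=\Psi_2\equiv-\gamma\Psi_1$.

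The main obstacle is integrality. We need to know that $\bX_n(A)$ is spanned over $R$ (not merely over $K$) by the $\phi^{*j}\Theta$, and that no non-integral combination of the $\Psi_i$ lies in $\iota^*\phi^*\bX_{n-1}(A)$. Otherwise the quotient could acquire torsion or fail to be free. The first thing I would try is the following. Because each relation has leading coefficient $1$ in $\Psi_{k+2}$ (respectively $\Psi_{k+1}$), the relation matrix is unitriangular in its top block. So once $R$-generation of $\bX_n(A)$ is granted from \cite{BS_b} (Theorem 8.7 of \cite{PS-2}), freeness of the quotient follows formally.
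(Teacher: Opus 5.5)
Your computation is correct and is essentially the argument behind the result. The paper does not prove this statement; it only quotes Theorem 9.7 of \cite{BS_b}. Your elimination of the top $\Psi_i$'s by the monic relations $\iota^*(\phi^*)^k\Theta$ is the $g=1$ case of the companion-matrix computation in Section~\ref{non-deg}, where $\Lambda_{\mup}=1$, which is why everything stays integral. The integrality inputs you isolate are exactly what the paper imports from \cite{BS_b} and \cite{PS-2}: that $\Psi_1,\dots,\Psi_n$ is an $R$-basis of $\Hom(N^nA,\hG)$, and that the $\phi^{*j}\Theta$ form an $R$-basis of $\bX_n(A)$. Granting these, unitriangularity finishes the argument.

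What you should drop is the promised ``normalisation'' reconciling your $\fra^*(\Psi_1)=\Psi_2$ with the printed $\fra^*(\Psi_1)=\gamma\Psi_2$. No such normalisation exists, because the printed formula is a misprint.

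\begin{itemize}
\item Since $\phi$ is the identity on $R$, $\fra^*$ is $K$-linear, so its determinant does not depend on the basis.
\item The printed formulas give the matrix $\begin{pmatrix}0&-\gamma\\ \gamma&\phi(\lam)\end{pmatrix}$, with determinant $\gamma^2$.
\item Your matrix $\begin{pmatrix}0&-\gamma\\ 1&\phi(\lam)\end{pmatrix}$ has determinant $\gamma$.
\item These differ because $\gamma=q$ by Lemma~\ref{theta-lemma}.
\end{itemize}

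For instance, replacing $\Psi_2$ by $\gamma^{-1}\Psi_2$ makes the first printed formula true, but it turns the second into $\fra^*(\gamma^{-1}\Psi_2)=\phi(\lam)\gamma^{-1}\Psi_2-\Psi_1$.

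Your version is forced by the definition $\fra^*\Psi_i=\Psi_{i+1}$ in the fixed basis $\{\Psi_1,\Psi_2\}$. It is also the version the paper actually uses afterwards: Corollary~\ref{char-poln} gives the characteristic polynomial $t^2-\phi(\lam)t+\gamma$, and Lemma~\ref{theta-lemma} and the proof of Theorem~\ref{Iso-crys-11} rely on it. So you should state and prove part (1) with $\fra^*(\Psi_1)=\Psi_2$.
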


Now we fix the basis $\bb{B}= \{\Psi_1,\Psi_2\}$ when $A$ is not a Frobenius
lift and $\bb{B} = \{\Psi_1\}$ when $A$ is a Frobenius lift. Let 
$[\fra^*]_{\bb{B}}$ denote the matrix representation of $\fra^*$ with respect
to the fixed $\bb{B}$. Let $\mathrm{char}~{[\fra^*}]_{\bb{B}}
 \in R[t]$ denote the characteristic polynomial of the matrix 
$[\fra^*]_{\bb{B}}$. 

\begin{corollary}
\label{char-poln}
Let $A$ be an elliptic curve over $S$. 
\begin{enumerate}
\item If $A$ is not a Frobenius lift then $\mathrm{char }~{[\fra^*]}_{\bb{B}} = 
t^2 - \phi(\lam)t + \gamma$

\item If $A$ has a Frobenius lift then $\mathrm{char}~{[\fra^*]}_{\bb{B}} = 
t + \gamma$.
\end{enumerate}
\end{corollary}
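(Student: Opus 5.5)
The corollary is a direct matrix computation from Theorem \ref{fra-map}. I will write down the matrix $[\fra^*]_{\bb{B}}$ in each case and take its characteristic polynomial. Since $R=W(\bF_q)$ and $\phi$ is the identity on $R$, the operator $\fra^*$ is $R$-linear. So its matrix with respect to $\bb{B}$ is an honest linear-algebra matrix, and the characteristic polynomial does not depend on the basis.

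\emph{Non-CL case.} By Theorem \ref{fra-map}(1) we have $\fra^*(\Psi_1)=\gamma\Psi_2$ and $\fra^*(\Psi_2)=\phi(\lam)\Psi_2-\gamma\Psi_1$. The columns of the matrix are the coordinates of the images, so
$$[\fra^*]_{\bb{B}}=\mat{0}{-\gamma}{\gamma}{\phi(\lam)}.$$
The trace is $\phi(\lam)$ and the determinant is $0\cdot\phi(\lam)-(-\gamma)(\gamma)=\gamma^2$. This gives $t^2-\phi(\lam)t+\gamma^2$, not $t^2-\phi(\lam)t+\gamma$ as stated.

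This mismatch is the step I expect to be the real obstacle. One possibility is that the normalization of $\fra^*(\Psi_1)$ in Theorem \ref{fra-map}(1) is misrecorded. To check it, I would recompute $\fra^*$ directly. In $\bH_\d(A)$ we work modulo $\iota^*\phi^*\bX_1(A)$, and by \eqref{no-CL} $\iota^*\phi^*\bX_1(A)$ is spanned by $\iota^*\Theta_2=\Psi_2-\lam\Psi_1$. This yields the relation $\Psi_2\equiv\lam\Psi_1$.

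However, with $\phi$ the identity this relation appears to leave $\bH_\d(A)$ of rank $1$, not $2$. So I would first recheck exactly what $\bX_{n-1}$ is used in the quotient $\bH_n$ for $n=2$. Note that $\Theta_2$ has order $2$, so it does not lie in $\bX_1(A)$. Hence for $n=2$ the quotient is by $\iota^*\phi^*\bX_1(A)=0$, because $\bX_1(A)=0$ in the non-CL case. For $n=3$, the relation \eqref{no-CL} gives $\Psi_3\equiv\phi(\lam)\Psi_2-\gamma\Psi_1$. This confirms $\fra^*(\Psi_2)=\phi(\lam)\Psi_2-\gamma\Psi_1$ and $\fra^*(\Psi_1)=\Psi_2$.

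With this correction the matrix is $\mat{0}{-\gamma}{1}{\phi(\lam)}$, whose determinant is $\gamma$. Its characteristic polynomial is $t^2-\phi(\lam)t+\gamma$, as the corollary claims. So I would present the computation with $\fra^*\Psi_1=\Psi_2$, which is the defining shift $\fra^*\Psi_i=\Psi_{i+1}$ recalled in Section \ref{non-deg}. I would read the coefficient $\gamma$ in Theorem \ref{fra-map}(1) as a typo.

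\emph{CL case.} The module is $\bH_\d(A)=R\langle\Psi_1\rangle$, and modulo $\iota^*\phi^*\Theta_1=\Psi_2+\gamma\Psi_1$ we get $\fra^*\Psi_1=\Psi_2\equiv-\gamma\Psi_1$. So $[\fra^*]_{\bb{B}}=(-\gamma)$, and its characteristic polynomial is $t+\gamma$.
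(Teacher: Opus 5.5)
Your argument is correct and is the paper's own: read the matrix of $\fra^*$ in the basis $\bb{B}$ off Theorem \ref{fra-map} and take its characteristic polynomial. You are also right that the recorded formula $\fra^*(\Psi_1)=\gamma\Psi_2$ in Theorem \ref{fra-map}(1) would give constant term $\gamma^2$. It must be a typo for $\fra^*(\Psi_1)=\Psi_2$, as your recomputation from the definition of $\bH_\d$ shows ($\bH_2(A)=R\langle\Psi_1,\Psi_2\rangle$ since $\bX_1(A)=0$, and $\Psi_3\equiv\phi(\lam)\Psi_2-\gamma\Psi_1$ modulo $\iota^*\phi^*\Theta_2$). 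This corrected version is exactly what Lemma \ref{theta-lemma} needs to get $t^2-a_q(A)t+q$ with $\gamma=q$.

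In the final write-up, delete the discarded intermediate step. The class $\iota^*\Theta_2=\Psi_2-\lam\Psi_1$ is not a relation in $\bH_\d(A)$: it is the image of the generator of $\bXp(A)$, and $\Theta_2\notin\bX_1(A)$.
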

\begin{proof}
The result follows directly from Theorem \ref{fra-map}.
\end{proof}

\begin{lemma}
\label{theta-lemma}
Let $A/R$ be a non-CL elliptic curve over $S$ and  $P(t) = t^2 - a_q(A)t + q$ 
be the characteristic polynomial satisfied by the absolute  Frobenius 
$\ov{F}$ in $\End(\ov{A})$. Then 
$$
\iota^* \phi^* \Theta_2 = \Psi_3 - a_q(A) \Psi_2 + q \Psi_1.
$$
In particular, we have $\lam = a_q(A)$ and $\gamma = q$ and 
$$ \mathrm{char}~{[\fra^*]}_{\bb{B}} = P(t) = t^2 - a_q(A)t + q. $$
\end{lemma}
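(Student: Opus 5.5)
We will rerun the construction from the proof of Theorem \ref{isoupsilon} for $G=A$ an elliptic curve ($g=1$), with $P(t)=t^2-a_q(A)t+q$ annihilating $\ov{F}$. We then compare the resulting order-$2$ character with the primitive generator $\Theta_2$.

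\textbf{Step 1: build the comparison character.} Since $P(\ov{F})=0$, diagram (\ref{frob-diag}) and Proposition \ref{phiiso} show that $P(\phi):J^2A\map A$ factors through $\phi\circ\iota:N^1A\map A(\pi)$ via a morphism $\mfg:J^2A\map N^1A$. We set $\tTheta:=\Psi_1\circ\mfg\in\bX_2(A)$. From the diagram,
$$(\phi\circ\iota)^*\tTheta=\Psi_1\circ P(\fra)=\Psi_3-a_q(A)\Psi_2+q\Psi_1 .$$
Here we use $\fra^*\Psi_i=\Psi_{i+1}$. We also use that pulling back by $\iota$ then $\phi$ is compatible with the diagram (Theorem \ref{NandJphi}).

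\textbf{Step 2: express $\tTheta$ through the generator.} Since $A$ is non-CL, $\bX_2(A)$ is spanned over $R$ by $\Theta_2$ and $u^*\bX_1(A)$. The paper's description shows $\bX_1(A)=0$ in the non-CL case, because $\bX_\infty(A)$ is generated by $\Theta_2$ of exact order $2$. Hence $\tTheta=c\,\Theta_2$ for some $c\in R$. Applying $\iota^*\phi^*$ and using (\ref{no-CL}) gives
$$\Psi_3-a_q\Psi_2+q\Psi_1=\phi(c)\bigl(\Psi_3-\phi(\lam)\Psi_2+\gamma\Psi_1\bigr).$$
The $\Psi_i$ are a basis of $\Hom(N^3A,\hG)$, freely over $R$ via the Witt coordinates. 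Comparing $\Psi_3$-coefficients therefore gives $\phi(c)=1$, and then $\phi(\lam)=a_q(A)$ and $\gamma=q$. Because $\phi$ is the identity on $R=W(\FF_q)$, this gives $\lam=a_q(A)$, and the displayed identity for $\iota^*\phi^*\Theta_2$ is just $\iota^*\phi^*\tTheta$. The characteristic polynomial statement then follows from Corollary \ref{char-poln}(1).

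\textbf{Main obstacle.} The delicate point is justifying $(\phi\circ\iota)^*\tTheta=\Psi_1\circ P(\fra)$ with the right normalization of $\Psi_1$. Concretely, we must check that the square involving $\mfg$, $P(\fra)$ and $\phi\circ\iota$ in (\ref{frob-diag}) genuinely commutes. The reason is that $\phi\circ\iota:N^1A\map A(\pi)$ is an isomorphism (Proposition \ref{phiiso}), so $\mfg\circ(\phi\circ\iota)$ is determined by $P(\phi)\circ\phi\circ\iota=\phi\circ\iota\circ P(\fra)$, which follows from (\ref{phi-fra}) iterated. A second point is checking that $\tTheta$ has order exactly $2$ and not lower. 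This is automatic once the $\Psi_3$-coefficient is $1$, but it requires that the $\Psi_i$ be the chosen normalized basis, so that $\lam,\gamma$ in (\ref{no-CL}) are read off in the same coordinates.
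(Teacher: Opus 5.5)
Your proposal is correct and follows the paper's route: specialize diagram (\ref{frob-diag}) to $A$ with $P(t)=t^2-a_q(A)t+q$, read off $\Psi_1\circ P(\fra)=\Psi_3-a_q(A)\Psi_2+q\Psi_1$, and compare with (\ref{no-CL}). The paper writes $\iota^*\phi^*\Theta_2=\Psi_1\circ P(\fra)$ directly, silently identifying $\Theta_2$ with $\Psi_1\circ\mfg$; your Step 2 justifies this (via $\bX_1(A)=0$, so $\Psi_1\circ\mfg=c\,\Theta_2$, with the $\Psi_3$-coefficient forcing $c=1$), and you also verify $\mfg\circ(\phi\circ\iota)=P(\fra)$ from the injectivity of $\phi\circ\iota$ and (\ref{phi-fra}).
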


\begin{proof}
For elliptic curve $A$ the diagram (\ref{frob-diag}) turns out to be as follows:
$$
\xymatrix{ 
 N^{3}A \ar[r]^-{\phi\circ \iota} \ar[d]_-{P(\mfrak{f})} & J^{2}A \ar[r] 
\ar@{.>}[dl]_{\mfg} \ar[d]_-{P(\mfrak{\phi})} & \overline{A} 
\ar[d]_-{P(\overline{F})} \\
 N^1A\ar[d]_{\Psi_{1}} 
\ar[r]^-{\phi \circ \iota} & A \ar[r]^-h & \overline{A} \\
\hG. & & 
}
$$
Hence we have 
\beqar
\iota^* \phi^* \Theta_2  &=& \Psi_1 \circ P(\fra) \\
&=& \Psi_1 \circ (\fra^2 - a_q(A) \fra +q)\\
&=& \Psi_3 -a_q(A) \Psi_2 + q \Psi_1.
\eeqar

Comparing the above with (\ref{no-CL}) we obtain $\phi(\lam) = a_q(A)$ 
which implies $\lam = a_q(A)$ (since $a_q(A) \in \Z_p$) and $\gamma = q$
and the equality of the characteristic polynomials follows from the above and
Corollary \ref{char-poln} $(1)$.
\end{proof}

\begin{lemma}
\label{gamma-rho}
Let $A$ be an elliptic curve over $S$ which is a Frobenius lift and $P(t) =
t^2 - a_q(A)t + q$ be the characteristic polynomial satisfied by the absolute
Frobenius $\ov{F}$ in $\End(\ov{A})$. Then
$$
\iota^*\phi^* \Theta_1 = \Psi_2 - (D\beta) \Psi_1
$$
\end{lemma}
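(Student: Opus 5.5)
We need to prove: for $A$ an elliptic curve with Frobenius lift $\beta$, $\iota^*\phi^*\Theta_1 = \Psi_2 - (D\beta)\Psi_1$. By \eqref{with-CL} we already know $\iota^*\phi^*\Theta_1 = \Psi_2 + \gamma\Psi_1$, so it suffices to show $\gamma = -D\beta$.

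The plan is to use the endomorphism $\beta$ itself as a degree-one polynomial killing the Frobenius on $\ov{A}$. Set $Q(t) = t - \beta$. Since $\beta \bmod \pi$ is the absolute $q$-power Frobenius, for any $\pi$-adically complete $B$ the composite $J^1A(B)\to A(B)\to \ov{A}(B\otimes k)$ intertwines $\phi - \beta\circ u$ with $\ov{F} - \ov{F} = 0$, using the square \eqref{Frob-X}. Hence $\phi - \beta\circ u : J^1A \to A$ lands in $A(\pi)$. By Proposition \ref{phiiso} it therefore factors uniquely as $(\phi\circ\iota)\circ \mfg$ for a group homomorphism $\mfg: J^1A \to N^1A$; this is the $r=1$ analogue of diagram \eqref{frob-diag}. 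Define the order-one delta character $\tTheta := \Psi_1\circ\mfg \in \bX_1(A)$.

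Next we compute $(\phi\circ\iota)^*\tTheta = \Psi_1\circ\mfg\circ\phi\circ\iota$. By uniqueness in Proposition \ref{phiiso}, $\mfg\circ\phi\circ\iota$ is characterized by
\[
(\phi\circ\iota)\circ \mfg\circ\phi\circ\iota = \phi\circ\phi\circ\iota - \beta\circ u\circ\phi\circ\iota .
\]
Using \eqref{fphi} the first term equals $\phi\circ\iota\circ\fra$. For the second, $u\circ\phi\circ\iota = \phi\circ\iota\circ u$ on $N^2A$. Moreover $\beta$ commutes with $\phi\circ\iota$, via the induced endomorphism of $N^1A$ in \eqref{kernelend} and functoriality of $J^*$. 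This gives $\mfg\circ\phi\circ\iota = \fra - \beta\circ u$ on $N^2A$. Applying $\Psi_1$ and using \eqref{Psi1beta}, i.e.\ $\Psi_1\circ\beta = (D\beta)\Psi_1$, we get
\[
\iota^*\phi^*\tTheta = \Psi_2 - (D\beta)\Psi_1 ,
\]
with the identification $\Psi_1\circ\fra = \Psi_2$ as in the proof of Lemma \ref{theta-lemma}.

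It remains to compare $\tTheta$ with $\Theta_1$. Restricting the relation $\phi\circ\iota\circ\mfg\circ\iota = \phi\circ\iota - \beta\circ u\circ\iota = \phi\circ\iota$ on $N^1A$ and using injectivity from Proposition \ref{phiiso} gives $\mfg\circ\iota = \mathbbm{1}$. Hence $\iota^*\tTheta = \Psi_1 = \iota^*\Theta_1$. Then $\tTheta - \Theta_1$ vanishes on $N^1A$, so it lies in $u^*\bX_0(A) = u^*\Hom(A,\hG) = 0$, since $A$ is an abelian scheme. Therefore $\tTheta = \Theta_1$, which gives the claim and hence $\gamma = -D\beta$.

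The main obstacle is the commutation $\beta\circ\phi\circ\iota = \phi\circ\iota\circ\beta$ together with the uniqueness-based computation of $\mfg\circ\phi\circ\iota$. I would first verify it on functor of points via $W_n(B)$, where $\beta$ acts through $A(-)$ and commutes with the Witt Frobenius $F$ by naturality.
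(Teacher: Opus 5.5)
Your proposal is correct and follows the paper's own argument. Like the paper, you factor $\phi-\beta\circ u$ through $N^1A$ via Proposition~\ref{phiiso} to obtain $\mfg$, deduce $\mfg\circ\phi\circ\iota=\fra-\beta\circ u$ on $N^2A$, and apply $\Psi_1$ together with \eqref{Psi1beta}; you additionally spell out two steps the paper's diagram leaves implicit, namely the uniqueness argument for $\mfg\circ\phi\circ\iota$ and the identification $\Theta_1=\Psi_1\circ\mfg$ via $\mfg\circ\iota=\mathbbm{1}$ and $\bX_0(A)=0$.
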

In particular, we have $\gamma = -(D\beta)$.

\begin{proof}
For an elliptic curve $A$ we have the following diagram as follows:
$$
\xymatrix{ 
 N^{2}A \ar[r]^-{\phi\circ \iota} \ar[d]_-{\fra - \beta} & J^{1}A \ar[r] 
\ar@{.>}[dl]_{\mfg} \ar[d]^-{\phi - \beta} & \overline{A} 
\ar[d]^-{0 } \\
 N^1A\ar[d]_{\Psi_{1}} 
\ar[r]^-{\phi \circ \iota} & A \ar[r]^-h & \overline{A} \\
\hG. & & 
}
$$
Hence we have 
\beqar
\iota^* \phi^* \Theta_1  &=& \Psi_1 \circ (\fra - \beta) \\
&=& \Psi_1 \circ \fra - \Psi_1 \circ \beta \\
&=& \Psi_2 - (D\beta)\Psi_1, \mbox{ by (\ref{Psi1beta})}.
\eeqar

Comparing the above with (\ref{with-CL}) we obtain $\gamma = -(D\beta)$.
\end{proof}

\cblue

Let $\mr{\Iso}(H) = (H,F,(H \supset V \supset \{0\}))$ be a filtered $F$-isocrystal over $K$ 
where $H$ is a two dimensional vector space over $K$, $F:V \map V$ is a 
semilinear (in fact, in this case $F$ is linear since $\phi$ is the lift of $q$-power Frobenius, which is identity on $K$)
operator which is a bijection and $V$ is a one-dimensional $K$-subspace 
of $H$. Let $p_F(t) \in K[t]$ be the degree two
characteristic polynomial of $F$.

\begin{proposition}
\label{filiso}
Let $\mr{\Iso}(H)=(H,F,(H \supset V \supset \{0\}))$ and 
$\mr{\Iso}(H')=
\newline
(H',F',(H' \supset V' \supset \{0\}))$ 
be filtered $F$-isocrystals over $K$ such that 
\begin{enumerate}
\item $\dim_{K} H = \dim_{K} H' = 2$
\item $F(V) \ne V$ and $F'(V') \ne V'$ and
\item $p_F(t) = p_{F'}(t)=: p(t)$. 

\end{enumerate}
Then $\mr{\Iso}(H) \simeq \mr{\Iso}(H')$ in the category of filtered $F$-isocrystals
over $K$.
\end{proposition}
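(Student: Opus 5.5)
We prove the statement by putting both objects into the same normal form, namely a companion-matrix basis adapted to the filtration, and then mapping one such basis to the other. Note that $F$ is really an operator on $H$; the phrase ``$F:V\map V$'' in the setup must be read this way, since hypothesis (2) only makes sense for an operator on $H$. As remarked before the statement, $\phi$ is the identity on $K$, so $F$ and $F'$ are honest $K$-linear bijections of the two-dimensional spaces $H$ and $H'$.

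\textbf{Step 1: a cyclic basis.} Pick a nonzero $v\in V$, so $V=Kv$. Hypothesis (2) says $F(V)\neq V$, hence $Fv\notin Kv$, and therefore $\{v,Fv\}$ is a $K$-basis of $H$. Write
\[
p(t)=t^2+a_1t+a_0 .
\]
By Cayley--Hamilton, $F^2v=-a_1Fv-a_0v$. So in the basis $\{v,Fv\}$ the matrix of $F$ is the companion matrix $C_p$ of $p$. In particular this matrix depends only on $p$, not on the choice of $v$.

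\textbf{Step 2: the same basis for $H'$.} Do the same for $H'$: choose $0\neq v'\in V'$, so that $\{v',F'v'\}$ is a basis of $H'$. By hypothesis (3), $F'$ has the same characteristic polynomial $p$, so its matrix in this basis is the same companion matrix $C_p$.

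\textbf{Step 3: the isomorphism.} Define the $K$-linear map $\alpha:H\map H'$ by $\alpha(v)=v'$ and $\alpha(Fv)=F'v'$. It sends a basis to a basis, so it is bijective. It intertwines the Frobenii: $\alpha F$ and $F'\alpha$ agree on $v$ by construction, and they agree on $Fv$ because
\[
\alpha(F^2v)=-a_1F'v'-a_0v'=F'^2v'
\]
by Cayley--Hamilton for $F'$. Finally $\alpha(V)=\alpha(Kv)=Kv'=V'$, and $\alpha$ trivially preserves the outer filtration steps $H\supset\{0\}$. Since $\alpha$ is bijective and $\alpha(V)=V'$, the inverse $\alpha^{-1}$ also respects the filtrations and commutes with the Frobenii. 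Thus $\alpha$ is an isomorphism in the category of filtered isocrystals, in the strict sense that the inverse is again a morphism.

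There is no real obstacle here. The only point needing care is the role of hypothesis (2): it guarantees that $v$ is a cyclic vector for $F$, which is exactly what forces the companion form and ensures $\alpha$ is invertible. Bijectivity of $F$ (that is, $a_0\neq0$) is not needed beyond the standing assumption that both objects are isocrystals.
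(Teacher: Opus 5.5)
Your proposal is correct and follows the paper's own proof essentially verbatim. Both arguments pick $0\neq v\in V$, use $F(V)\neq V$ to get the cyclic basis $\{v,F(v)\}$, do the same in $H'$, and define the map by $v\mapsto v'$, $F(v)\mapsto F'(v')$. You also spell out the Cayley--Hamilton check on $F(v)$ and that the inverse respects the filtration, both of which the paper leaves implicit.
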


\begin{proof}

Let $p(t) = t^2 - at -b$ for some $a,b \in K$ and choose any non-zero vector
 $v \in V$. Since $F(V) \ne V$, the set $\{v, F(v)\}$ forms a $K$-basis of 
$H$. Then we have 
$$
F^{\circ 2}(v) = a F(v) + b v.
$$
Similarly, for any non-zero vector $w \in V'$ the set $\{w,F'(w)\}$ is a 
$K$-basis for $H'$ and we have
$$
F'^{\circ 2}(w) = a F'(w) + b w.
$$
Define the $K$-linear map $\Phi: H \map H'$ given by $\Phi(v):= w$ and 
$\Phi(F(v)) := F'(w)$. Then $\Phi$ is an isomorphism of $K$-vector spaces
that satisfies
$$
\xymatrix{
H \ar[d]_F \ar[r]^\Phi & H'\ar[d]^{F'}\\
H \ar[r]^\Phi & H'
}
$$
such that $\Phi(V) = V'$. Hence $\Phi:\mathrm{\Iso}(H) \map \mathrm{\Iso}(H')$
is the required isomorphism of filtered $F$-isocrystals and this completes our
proof.

\end{proof}

\begin{proof}[{\bf Proof of Theorem \ref{Iso-crys-11}}]
We have the following two cases depending on whether $A$ admits a lift of Frobenius or not. 

(1) \textbf{\underline{Non-CL case}:} 
Consider the filtered $F$-isocrystal 
$$
\mathrm{\Iso}(\bH_\d(A)_{K}) = (\bH_\d(A)_{K}, \fra^*, 
\bH_\d(A)_{K}^\bullet)
$$
where $\bH_\d(A)_{K}^\bullet$ is the filtration given by 
$\bH_\d(A)_{K} \supset \bXp(A)_{K} \supset \{0\}$.
Since $A$ is non-CL, we have $\fra^*(\bXp(A)_{K}) \ne \bXp(A)_{K}$ and
the characteristic polynomial of $\fra^*$ is $p_{\fra^*}(t) = t^2 - a_q(A) t + q$.

On the other hand, consider the filtered $F$-isocrystal of the first crystalline
cohomology $\mathrm{\Iso}(\Hcr(A)_{K})= (\Hcr(A)_{K}, \Fc,
\Hcr(A)^\bullet_K)$ where $\Hcr(A)^\bullet_K$ is the Hodge filtration $\Hcr(A)_K 
\supset H^0(A,\Omega_A)_K \supset \{0\}$ and $\Fc$ is the crystalline Frobenius
operator on $\Hcr(A)_K$.
Since $A$ is a non-CL elliptic curve over $R$, by Theorem $3.15$ of 
\cite{BO},
$\Fc (H^0(A,\Omega_A)_{K}) \ne H^0(A,\Omega_A)_{K}$ and the characteristic
polynomial of $\Fc$ is $p_{\Fc}(t) = t^2 - a_q(A) t + q$. 

Hence by Proposition \ref{filiso} applied to $\mathrm{\Iso}(\bH_\d(A)_{K})$
and $\mathrm{\Iso}(\Hcr(A)_{K})$ we obtain our required isomorphism of 
filtered $F$-isocrystals.

\vspace{.2cm}

\noindent (2) \textbf{\underline{CL case}:} 
If $\beta:A  \map A$ is the Frobenius lift map, then $D\beta$ is the induced
map on the Lie algebra $T_0A$. By (\ref{Psi1beta}) and
Lemma \ref{gamma-rho}, it follows that $-\gamma = D\beta$. Hence we have
$$
\Fc(\omega) = - \gamma \omega
$$
for all $\omega  \in H^0(A,\Omega_A)$.

On the other hand, by Theorem \ref{isoupsilon} and Theorem \ref{fra-map}(2) we 
have $\Hd(A)_K =\bXp(A)_K \stk{\Upsilon}{\simeq} H^0(A,\Omega_A)_K$ and
$\fra^*(v) = -\gamma v$ for all $v \in \Hd(A)_K$ where $\Upsilon$ is as in
Theorem \ref{isoupsilon}. 
Hence $\Upsilon$ gives us the required isomorphism of $F$-isocrystals and 
this completes the proof.
\end{proof}

\vspace{.5cm}
\noindent {\bf Acknowledgements.}
The authors would  like to thank James Borger and Netan Dogra for insightful comments. The second author was partly supported by the Vikram Sarabhai Research Fellowship at IIT Gandhinagar and the Postdoctoral Fellowship at IISER Mohali. 
The authors are grateful to the anonymous referee for the careful reading of the manuscript and for the insightful comments and suggestions, which have significantly improved the exposition and clarity of the paper.
The third author is also thankful to Kiran Kedlaya for his perceptive remarks during a helpful discussion on this manuscript.
He would also like to thank the Lodha Mathematical Sciences Institute, Mumbai for its kind hospitality and support.

\color{black}

\footnotesize{

}

\end{document}